\newcommand{\ubar}[1]{\underaccent{\bar}{#1}}
\def\dfrac{\displaystyle\frac}
\def\dsum{\displaystyle\sum}
\newtheorem{prop}{Proposition}
\newtheorem{theo}[prop]{Theorem}
\newtheorem{lemm}[prop]{Lemma}
\newtheorem{rema}[prop]{Remark}
\newtheorem{defi}[prop]{Definition}
\newtheorem{claim}{Claim}
\newcommand{\be}{\begin{equation}}
\newcommand{\ee}{\end{equation}}
\newcommand{\lt}{\left}
\newcommand{\rt}{\right}
\renewcommand{\leq}{\leqslant}
\renewcommand{\geq}{\geqslant}
\newcommand{\td}{\tilde}
\newcommand{\p}{\partial}
\newcommand{\la}{\kappa}
\newcommand{\R}{\mathbb{R}}
\newcommand{\M}{\mathcal{M}}
\newcommand{\K}{\mathcal{K}}
\newcommand{\us}{u^*}
\newcommand{\ur}{u^r}
\newcommand{\urs}{u^{r*}}
\newcommand{\uus}{\bar{u}^*}
\newcommand{\lus}{\ubar{u}^*}
\newcommand{\lu}{\ubar{u}}
\newcommand{\uu}{\bar{u}}
\newcommand{\ga}{\gamma}
\newcommand{\gas}{\gamma^*}
\newcommand{\vp}{\varphi}
\newcommand{\T}{\partial}
\newcommand{\w}{w^*}
\newcommand{\s}{\sigma}
\newcommand{\ka}{\kappa}
\newcommand{\goto}{\rightarrow}
\newcommand{\F}{\mathcal{F}}
\newcommand{\hF}{\hat{F}}
\newcommand{\bn}{\bar{\nabla}}
\newcommand{\C}{\mathcal{C}}
\numberwithin{equation}{section}
\begin{document}
\title{ The prescribed curvature problem for entire hypersurfaces in Minkowski space}
\author{Changyu Ren}
\address{School of Mathematical Science, Jilin University, Changchun, China}
\email{rency@jlu.edu.cn}
\author{Zhizhang Wang}
\address{School of Mathematical Science, Fudan University, Shanghai, China}
\email{zzwang@fudan.edu.cn}
\author{Ling Xiao}
\address{Department of Mathematics, University of Connecticut,
Storrs, Connecticut 06269}
\email{ling.2.xiao@uconn.edu}
\thanks{Research of the first author is supported by NSFC Grant No. 11871243 and the second author is supported  by NSFC Grant No.11871161 and 11771103.}
\maketitle

\begin{abstract} We prove three results in this paper. First, we prove for a wide class of functions $\varphi\in C^2(\mathbb{S}^{n-1})$ and
$\psi(X, \nu)\in C^2(\R^{n+1}\times\mathbb{H}^n),$ there exists a unique, entire, strictly convex, spacelike hypersurface $\M_u$ satisfying
$\s_k(\ka[\M_u])=\psi(X, \nu)$ and $u(x)\rightarrow |x|+\varphi\left(\frac{x}{|x|}\right)$ as $|x|\goto\infty.$ Second, when $k=n-1, n-2,$ we show the existence and uniqueness of entire, $k$-convex, spacelike hypersurface $\M_u$ satisfying $\s_k(\ka[\M_u])=\psi(x, u(x))$ and $u(x)\rightarrow |x|+\varphi\left(\frac{x}{|x|}\right)$ as $|x|\goto\infty.$ Last, we obtain the existence and uniqueness of entire, strictly convex, downward
translating solitons $\M_u$ with prescribed asymptotic behavior at infinity for $\s_k$ curvature
flow equations. Moreover, we prove that the  downward
translating solitons  $\M_u$ have bounded principal curvatures.
\end{abstract}

\section{Introduction}
Let $\R^{n, 1}$ be the Minkowski space with the Lorentzian metric
\[ds^2=\sum_{i=1}^{n}dx_{i}^2-dx_{n+1}^2.\]
In this paper, we will devote ourselves to the study of spacelike hypersurfaces with prescribed
$\sigma_k$ curvature in Minkowski space $\R^{n, 1}$. Here, $\s_k$ is the $k$-th elementary symmetric polynomial, i.e.,
 \[\s_k(\ka)=\sum\limits_{1\leq i_1<\cdots<i_k\leq n}\ka_{i_1}\cdots\ka_{i_k}.\]
Any such hypersurface $\M$ can be written locally as a graph of a function
$x_{n+1}=u(x), x\in\R^n,$ satisfying the spacelike condition
\be\label{int1.1}
|Du|<1.
\ee
More precisely, we will focus on the following equation:
\begin{eqnarray}\label{curvature}
\sigma_k(\ka[\M_u])=\psi(X,\nu),
\end{eqnarray}
where $X=(x, u(x))$ is the position vector of $\M_u=\{(x, u(x))| x\in\R^n\}$, $\nu=\frac{(Du, 1)}{\sqrt{1-|Du|^2}}$ is the upward unit normal lying on the hyperboloid $\mathbb{H}^n$, and $\ka[\M_u]=(\kappa_1,\cdots,\kappa_n)$ are the principal curvatures of $\M_u$. Thus equation \eqref{curvature} can be rewritten as
\begin{eqnarray}\label{curvature1}
\sigma_k(\ka[\M_u])=\psi(x,u(x),Du).
\end{eqnarray}
Notice that the right hand side functions $\psi$ of \eqref{curvature} and \eqref{curvature1} are different. Slightly extending the notation, we use the same symbol here.

 The classical Minkowski problem asks for the construction of a strictly convex compact surface $\Sigma$ whose Gaussian curvature is a given positive function $f(\nu(X))$, where $\nu(X)$ denotes the normal to $\Sigma$ at $X.$ This problem has
been discussed by Nirenberg \cite{N}, Pogorelov \cite{P3}, and Cheng-Yau \cite{CY}. The general problem of finding strictly convex hypersurfaces with prescribed surface area measures is called the Christoffel--Minkowski problem. This type of problems can be deduced to a fully nonlinear equation of the form \eqref{curvature}. It may be traced back to Alexandrov \cite{A1} who established the problem of prescribing zeroth curvature measure.
Later on, the prescribed curvature measure problem in convex geometry
has been extensively studied by Alexandrov \cite{A2}, Pogorelov
\cite{P1}, Guan-Lin-Ma \cite{GLM2}, and Guan-Li-Li \cite{GLL}. A more general form of the prescribed curvature measure problem can be expressed as
\eqref{curvature1}. In particular, Guan-Ren-Wang \cite{GRW} solved this problem in Euclidean space for convex hypersurfaces. Other related studies and references may be found in \cite{BK, CNS4, CNS5, gg, O, TW}.

Our goal here is to construct entire, spacelike hypersurfaces satisfy equation \eqref{curvature} in Minkowski space.
The main results of this paper are the following.

The first result is to construct entire, strictly convex, spacelike hypersurfaces satisfying equation \eqref{curvature}.
\begin{theo}\label{theo1}
Suppose $\varphi$ is a $C^2$ function defined on $\mathbb{S}^{n-1},$ i.e., $\varphi\in C^2(\mathbb{S}^{n-1})$, $\psi(X,\nu)\in C^2(\mathbb{R}^{n+1}\times \mathbb{H}^n)$ is a positive function, and  $c_1\geq\psi(X,\nu)\geq c_2$ for some positive constants $c_1,c_2$. We further assume that $\psi_{x_{n+1}}\geq 0$ (or $\psi_u\geq 0$). If either $\psi^{-1/k}(X, \nu)$ is locally strictly convex with respect to $X$ for any $\nu$ or $\psi$ only depends on $\nu,$ then there exits a unique, entire, strictly convex, spacelike hypersurface $\M_u=\{(x,u(x))| x\in\R^n\}$ satisfying \eqref{curvature}. Moreover, as $|x|\goto\infty,$
\be\label{approach}
u(x)\rightarrow |x|+\varphi\left(\frac{x}{|x|}\right).
\ee
\end{theo}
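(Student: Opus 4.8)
The plan is to construct $\M_u$ as a limit of solutions of Dirichlet problems on expanding balls, with the asymptotic condition \eqref{approach} enforced by barriers. First I would fix, once and for all, a smooth spacelike function $\lu$ on $\R^n$ whose graph is strictly convex and which satisfies $\lu(x)=|x|+\varphi(x/|x|)$ for $|x|$ large; this is possible because $\varphi\in C^2(\mathbb S^{n-1})$ and one has the full gap between $|x|+(\text{bounded})$ and the light cone, so a convexified, smoothed cone works. After rescaling one arranges $\s_k(\ka[\M_{\lu}])\le\psi$, i.e.\ $\lu$ is a subsolution; a piece of a downward--shifted light cone, or a large hyperboloid, serves as a supersolution $\uu\ge\lu$. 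On each ball $B_R$ I would then solve
\[
\s_k(\ka[\M_u])=\psi(x,u,Du)\quad\text{in }B_R,\qquad u=\lu\quad\text{on }\partial B_R,
\]
for a strictly convex spacelike solution $u_R$. Solvability is by the continuity method (or a degree argument) together with the a priori estimates below; the structure hypotheses on $\psi$ --- boundedness between $c_1$ and $c_2$, $\psi_{x_{n+1}}\ge 0$ (resp.\ $\psi_u\ge 0$), and local convexity of $\psi^{-1/k}$ in $X$ or dependence on $\nu$ alone --- are exactly the conditions under which the classical machinery for spacelike Weingarten hypersurfaces (Bartnik--Simon, Bayard, Gerhardt, Urbas, Guan--Ren--Wang) applies.

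The heart of the matter is a priori estimates for $u_R$ that are \emph{uniform on compact subsets of $\R^n$} and independent of $R$. The $C^0$ bound $\lu\le u_R\le\uu$ is immediate from comparison. The gradient bound must be purely local, since $\M_{u_R}$ limits onto the light cone and $|Du_R|\to 1$ at infinity; so on each compact $K$ I would prove $|Du_R|\le 1-\delta(K)$ by sandwiching $u_R$ between $\lu$ and $\uu$ and invoking the interior gradient estimate for spacelike graphs (equivalently, an upper bound for $v:=(1-|Du_R|^2)^{-1/2}$ on $K$). The $C^2$, i.e.\ curvature, estimate is what I expect to be the main obstacle: one needs $\ka[\M_{u_R}]$ bounded on each $K$, independently of $R$. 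I would run a maximum-principle argument for an auxiliary quantity of the form $\log\ka_{\max}+(\text{a term in }\nu)+(\text{a cutoff in the distance to }\partial B_R)$; the local convexity of $\psi^{-1/k}$ (or the $\nu$-only dependence) is precisely what dominates the third-order error terms and closes the estimate, and it simultaneously drives a constant-rank / Caffarelli--Guan--Ma argument that upgrades $k$-convexity to strict convexity. Uniform ellipticity on compacts follows, and Evans--Krylov plus Schauder give interior $C^{2,\alpha}$ bounds on each $K$.

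With these uniform local estimates, a diagonal subsequence $u_{R_i}\to u$ produces an entire, strictly convex, spacelike solution of \eqref{curvature1}, hence of \eqref{curvature}. To obtain \eqref{approach}, I would, for each $\epsilon>0$, refine the barrier construction: build spacelike sub/supersolutions that coincide with $|x|+\varphi(x/|x|)$ to within $\epsilon$ outside some large ball, squeeze $u$ between them there, and let $\epsilon\to 0$; the room for this comes from $c_2\le\psi\le c_1$ and, again, the gap below the light cone. Finally, for uniqueness, let $u^1,u^2$ be two such solutions; by \eqref{approach} the difference $w=u^1-u^2\to 0$ at infinity, so it attains an interior extremum, and there the ellipticity of the equation together with the sign condition $\psi_{x_{n+1}}\ge 0$ (resp.\ $\psi_u\ge 0$) forces $w\le 0$; reversing the roles gives $w\ge 0$, so $u^1\equiv u^2$. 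The delicate points are thus the global-in-$R$ curvature estimate with the correct degeneration as $|x|\to\infty$, and the barrier engineering near infinity; everything else is an adaptation of known techniques.
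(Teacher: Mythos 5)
The overall scaffolding of your argument is sound---barriers, exhaustion, local estimates independent of the domain, diagonalization, comparison for uniqueness, and the constant-rank theorem for strict convexity are all ingredients the paper uses. But there is a genuine gap in the way you set up the approximating Dirichlet problems: you propose to solve $\s_k(\ka[\M_u])=\psi$ directly on $B_R\subset\R^n$ with $u=\lu$ on $\partial B_R$ and to obtain a \emph{strictly convex} solution $u_R$. For general $1\leq k\leq n$ the natural admissible cone for this equation is only $\Gamma_k$, so the continuity (or degree) method yields a $k$-convex solution, not a convex one, and the constant-rank machinery you invoke (Caffarelli--Guan--Ma) presupposes that the hypersurface is already convex; it upgrades nonnegative definiteness to constant rank, it does not upgrade $k$-convexity to convexity. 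Consequently your limit $u$ need not be convex, which is exactly what Theorem~\ref{theo1} asserts. Moreover, you do not address the boundary $C^2$ estimate for your primal Dirichlet problem: the paper's infinitesimal stretching/rotation argument (Subsection~\ref{Dirichlet sub2}) is only carried out for $\psi=\psi(x,u)$ and $k=n-1,n-2$, and for general $k$ with $\psi(X,\nu)$ this step is nontrivial and is not ``classical machinery applies'' territory.

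The paper sidesteps both problems at once by passing to the Legendre transform / Gauss-map picture: it poses the Dirichlet problem for $\us$ on balls $B_r\subset B_1$ in the Gauss image (\eqref{D-approximate}), where the operator becomes the Hessian quotient $\s_n/\s_{n-k}$ applied to a matrix built from $D^2\us$. That operator lives on the positive cone, so solutions are automatically strictly convex; the boundary $C^2$ estimate is obtained from \cite{RWX} and Bo~Guan's barrier argument; and the asymptotics \eqref{approach} fall out of Lemma~\ref{lem-legendre-boundary} because the limiting boundary datum is $\us=-\vp$ on $\partial B_1$. The global $C^2$ estimate (Lemma~\ref{lem-c2-global}) is also run in the dual/hyperbolic-Hessian picture with test function $\log\Lambda_{11}+Nx_{n+1}$, which is where the hypothesis that $\psi^{-1/k}$ is locally strictly convex in $X$ (or $\psi$ depends only on $\nu$) enters. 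Your sketch of the local interior estimates (Bayard--Schn\"urer-type $C^1$, Pogorelov-type $C^2$, diagonal limit, and then constant-rank plus splitting plus \eqref{approach} to exclude degenerate directions) is in the right spirit and matches Sections~\ref{local C1} and \ref{prescribed curvature}, but it only becomes rigorous once the approximating solutions are known to be convex---which is precisely what your setup does not provide. To repair the argument you would need to either pass to the dual formulation as the paper does, or supply an independent argument that the primal Dirichlet solutions on $B_R$ are convex together with a boundary $C^2$ estimate valid for all $1\leq k\leq n$ and $\psi=\psi(X,\nu)$.
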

\begin{rema}\label{rema2}
Indeed, from the proof of the $C^2$ global estimate Lemma \ref{lem-c2-global} we can see that,
the assumption $\psi(X,\nu)$ dose not depend on $X$ can be replaced by a weaker assumption, that is,
$\psi^{-1/k}(X,\nu)$ is convex with respect to $X$ and the corresponding form $\psi(x,u,Du)$ dose not depend on $|x|$.
\end{rema}
\begin{rema}\label{rema3}
In the proof, we only can see that the hypersurface $\M_u$ we constructed is convex. In
order to say it's strictly convex, we need to apply the Constant Rank Theorem (see Theorem 1.2 in \cite{GLM} and Theorem 27 in \cite{WX})
and the Splitting Theorem (see Theorem 28 in \cite{WX}) to obtain that if $\M_u$ has a degenerate point in the interior, then
$\M_u=\M^l\times\R^{n-l},$ where $\M^l\subset\R^{l, 1}$ is a strictly convex, space like hypersurface. This contradicts \eqref{approach}.

\end{rema}
Before stating our second result, we need the following definition:
\begin{defi}
\label{intdef1}
A $C^2$ regular hypersurface $\M\subset \R^{n, 1}$ is $k$-convex,
if the principal curvatures of $\M$ at $X\in\M$ satisfy $\kappa[X]\in\Gamma_k$ for all $X\in\M$, where $\Gamma_k$
is the G\r{a}rding cone
\[\Gamma_k=\{\la\in\R^n|\sigma_m(\la)>0, m=1, \cdots, k\}.\]
\end{defi}
Using the newly developed methods in \cite{RW} and \cite{RW1}, we are able to generalize results in \cite{Bayard05}. We prove
\begin{theo}\label{theo2}
Suppose $\varphi$ is some $C^2$ function defined on $\mathbb{S}^{n-1}$ and $\psi(x,u(x))\in C^2(\mathbb{R}^{n+1})$ is a positive function satisfying $c_1\geq\psi(x, u(x))\geq c_2$ for $c_1,c_2>0$. We further assume that $k=n-1,n-2,$  and $\psi_u\geq 0$. Then there exits a unique, $k$-convex, spacelike hypersurface $\M_u=\{(x,u(x))| x\in\R^n\}$ satisfying
\be\label{curvature-k}
\s_k(\ka[\M_u])=\psi(x, u(x)).
\ee
 Moreover, as $|x|\goto\infty,$
\be\label{approach*}
u(x)\rightarrow |x|+\varphi\left(\frac{x}{|x|}\right).
\ee
\end{theo}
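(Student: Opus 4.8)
The plan is to solve the Dirichlet problem on expanding balls $B_R$ and pass to the limit as $R\to\infty$, exactly in the spirit of the Bayard-type construction but now using the new curvature estimates of \cite{RW,RW1} that are available precisely for $k=n-1,n-2$. First I would fix the asymptotic profile: let $\phi(x)=|x|+\varphi(x/|x|)$ for $|x|$ large (smoothed near the origin), and on each ball $B_R$ solve the approximating problem $\sigma_k(\kappa[\M_u])=\psi(x,u)$ in $B_R$ with boundary data $u=\phi$ on $\partial B_R$, searching for a $k$-admissible spacelike solution $u_R$. Existence on $B_R$ follows by the continuity method once the a priori estimates are in hand; uniqueness on $B_R$ is immediate from the comparison principle, which holds because $\psi_u\ge 0$ makes the operator proper.

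The estimates proceed in the usual hierarchy. The $C^0$ bound comes from barriers: the asymptotic boundary datum together with $c_2\le\psi\le c_1$ gives explicit spacelike sub- and supersolutions (e.g.\ suitable hyperboloids/cones) that pin $u_R$ independently of $R$; in particular the spacelike condition $|Du_R|<1$ is not lost in the interior, though one must control it carefully. The gradient estimate is the first delicate point: one needs $\sup_{B_R}|Du_R|$ bounded away from $1$ on compact subsets, uniformly in $R$, which is obtained by the standard Minkowski-space trick of estimating the ``support-function-like'' quantity $\langle X,\nu\rangle$ or equivalently $u-x\cdot Du$, using the structure of the equation and the barriers; the linear growth of $\phi$ is exactly compatible with this. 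The hard part will be the second-order estimate. For general $1<k<n$ a global curvature bound for merely $k$-convex (not convex) spacelike hypersurfaces is not known, and this is the whole reason the theorem is restricted to $k=n-1$ and $k=n-2$: here I would invoke the curvature estimates proved in \cite{RW} and \cite{RW1}, which give $\max_i|\kappa_i|\le C$ on compact sets depending only on the $C^1$ data, $\inf\psi$, $\|\psi\|_{C^2}$, and the geometry — crucially not on $R$ and not on a convexity assumption. Translating these into a bound on $D^2u_R$ (which also requires the already-established interior gradient bound to pass between the ambient second fundamental form and the Euclidean Hessian) completes the $C^2$ estimate.

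Once $\|u_R\|_{C^2(K)}\le C(K)$ uniformly in $R$ and the equation is uniformly elliptic and concave on the relevant cone (concavity of $\sigma_k^{1/k}$ on $\Gamma_k$), Evans--Krylov plus Schauder give $C^{2,\alpha}$ and then $C^\infty$ interior bounds. A diagonal/Arzel\`a--Ascoli argument over an exhaustion $B_{R_j}\uparrow\R^n$ produces an entire $k$-convex spacelike solution $u$ of \eqref{curvature-k}. The asymptotic behavior \eqref{approach*} is then forced by trapping $u$ between the barriers used for the $C^0$ estimate, which themselves converge to $|x|+\varphi(x/|x|)$ at infinity; refining the barriers (as in \cite{Bayard05}) gives the precise limit rather than just linear growth. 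Finally, uniqueness of the entire solution follows from the comparison principle: if $u,\td u$ are two entire solutions with the same asymptotics, then for any $\varepsilon>0$ the function $u+\varepsilon(\text{something going to }0)$ dominates $\td u$ outside a large ball, and properness ($\psi_u\ge0$) upgrades this to $u\ge\td u$ everywhere; by symmetry $u=\td u$.

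The main obstacle, and the place where the restriction $k\in\{n-1,n-2\}$ is used essentially, is the interior curvature (hence $C^2$) estimate for $k$-convex spacelike hypersurfaces with only $C^2$ right-hand side $\psi(x,u)$; everything else is a fairly standard continuity-method-plus-exhaustion argument once \cite{RW,RW1} are granted. A secondary technical point is ensuring the spacelike condition and the gradient bound survive uniformly in $R$ up to the boundary data of linear growth, which is handled by a careful choice of cone/hyperboloid barriers adapted to $\varphi$.
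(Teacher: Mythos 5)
Your overall strategy is the same as the paper's: solve Dirichlet problems on an exhaustion, use the Ren--Wang estimates from \cite{RW,RW1} to get curvature bounds that do not require convexity (this is exactly why $k=n-1,n-2$), then take a local limit. You have correctly identified the one non-routine ingredient. The barrier picture is also right — the paper builds $\lu,\uu$ with $\sigma_k=c_1,c_2$ from \cite{WX} and traps $u$ between them.

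Where your proposal has a real gap is the \emph{boundary} $C^2$ estimate for the Dirichlet problem, which you sweep into ``once the a priori estimates are in hand.'' For merely $k$-convex (not convex) spacelike solutions this is a genuine technical step, and the \cite{RW,RW1} estimates — which are interior or global on closed manifolds — say nothing about it. The paper handles it (Lemma~\ref{lem c2 boundary}) by the Caffarelli--Nirenberg--Spruck device of infinitesimal Lorentz rotations and dilations, and this device relies heavily on the boundary data being \emph{constant}. That is precisely why the paper does not solve on balls $B_R$ with boundary data $\phi$, but instead poses the Dirichlet problem on the strictly convex level sets $\Omega_n=\{\lu=n\}$ with $u=n$ on $\partial\Omega_n$. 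With your choice of $B_R$ and $u|_{\partial B_R}=\phi$, the boundary gradient $|D\phi|\to 1$ and the boundary data tends to lightlike as $R\to\infty$, so the constants in any CNS-type boundary Hessian bound would degenerate; you'd need a separate argument. A secondary point: the interior estimate that survives the exhaustion is not a plain ``$\max|\kappa_i|\leq C$ on compacts'' bound but a Pogorelov-type interior estimate $(s-u)\kappa_{\max}\leq C$ (Lemma~\ref{lem-local c2*}), where the cutoff weight $s-u$ is what makes the constant independent of the domain; the \cite{RW,RW1} test functions appear there with that extra weight and an extra term $\beta\log(s-u)$ in the test function. Without spelling these two steps out, the proof does not close.
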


Now, let's consider the $\sigma_k$ curvature flow with forcing term in Minkowski space:
\begin{eqnarray}
\frac{d X}{dt}=-\left(\C-\frac{\sigma_k^{1/k}(\kappa[\M_u])}{\binom{n}{k}^{1/k}}\right)\nu,
\end{eqnarray}
where $\ka[\M_u]\in\Gamma_k.$ This can be rewritten as the equation for the height function $u,$
\begin{eqnarray}\label{flow}
\frac{u_t}{\sqrt{1-|Du|^2}}=\frac{\sigma_k^{1/k}(\kappa[\M_u])}{\binom{n}{k}^{1/k}}-\C.
\end{eqnarray}
The downward translating soliton to \eqref{flow} is of the form
\begin{eqnarray}\label{a}
u(x,t)=u(x)-t,
\end{eqnarray}
where $u(x)$ satisfies
\begin{eqnarray}\label{soliton}
\left(\frac{\sigma_k}{\binom{n}{k}}\right)^{1/k}(\ka[\M_u])=\C-\frac{1}{\sqrt{1-|Du|^2}}.
\end{eqnarray}
The above equation \eqref{soliton} can be viewed as the ``degenerate" type of \eqref{curvature}.
In this case, we prove the following theorem:
\begin{theo}\label{theo3}
Suppose $\varphi$ is a $C^2$ function defined on $\mathbb{S}^{n-1}_{\td{\C}}:=\lt\{x\in\R^n| |x|=\td{\C}\rt\},$ where $\td{\C}=\sqrt{1-\lt(\frac{1}{\C}\rt)^2}$ and $\C>1$ is a constant.
There exists a unique, strictly convex solution $u:\mathbb{R}^n\rightarrow\mathbb{R}$ of \eqref{soliton} such that as $|x|\goto\infty,$
\begin{eqnarray}\label{behaviour}
u(x)\goto\tilde{\C}|x|-\frac{1}{\C^2}\sqrt[k]{\frac{n-k}{n}}\log|x|+\varphi\left(\tilde{\C}\frac{x}{|x|}\right).
\end{eqnarray}
Moreover, $\M_u=\{(x, u(x))| x\in\R^n\}$ has bounded principal curvatures.
\end{theo}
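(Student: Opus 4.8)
The plan is to treat \eqref{soliton} as a limiting case of the Dirichlet problem on large balls and pass to the limit, exactly in the spirit of how one expects Theorems \ref{theo1}--\ref{theo2} are handled, but with the extra care demanded by the degenerate right-hand side $\C-(1-|Du|^2)^{-1/2}$, which forces $|Du|\to 1$ and hence $\s_k\to 0$ at infinity. First I would fix the model solitons: the rotationally symmetric solution $u=\underline{u}_{\C}(|x|)$ of \eqref{soliton} with the prescribed asymptotics, obtained by solving the resulting ODE; a direct computation shows $|D\underline{u}_{\C}|\to 1$, that $\underline{u}_{\C}(r)=\tilde{\C}r-\C^{-2}\sqrt[k]{(n-k)/n}\log r+O(1)$, and that $\underline{u}_{\C}$ is strictly convex. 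This radial solution, together with $\varphi$, is used to build barriers: on $B_R(0)$ one uses $\underline{u}_{\C}$ shifted by $\min\varphi$ and $\max\varphi$ (or the harmonic-type extension of $|x|+\varphi$ used for Theorem \ref{theo1}, truncated appropriately) to pin down the boundary data $\vp_R$ on $\p B_R$ and to guarantee a strictly convex subsolution exists for every $R$.

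Next I would solve, for each $R>0$, the approximating Dirichlet problem
\begin{eqnarray}\label{approx-soliton}
\lt(\frac{\s_k}{\binom{n}{k}}\rt)^{1/k}(\ka[\M_{u^R}])=\C-\frac{1}{\sqrt{1-|Du^R|^2}}+\frac{1}{R}, \quad u^R\big|_{\p B_R}=\vp_R,
\end{eqnarray}
where the $1/R$ term restores nondegeneracy so that the standard existence theory (continuity method, using the subsolution as a barrier and the convexity of $\s_k^{1/k}$) applies and yields a unique strictly convex spacelike $u^R$. The heart of the argument is then the a priori estimates, uniform on compact subsets of $\R^n$, that allow $R\to\infty$: (i) a $C^0$ bound squeezing $u^R$ between the upper and lower radial barriers; (ii) a gradient bound $|Du^R|\leq 1-\delta(K)$ on each $B_K$ — this is the crucial spacelike estimate and it must come from the structure of the equation and the barriers, since near infinity $|Du|\to 1$; (iii) interior $C^2$ estimates. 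For (iii) I would invoke the global $C^2$ estimate machinery referenced for Theorem \ref{theo1} (the argument behind Lemma \ref{lem-c2-global}), adapted to the soliton right-hand side; concavity of $\s_k^{1/k}$ and the convexity of $u^R$ give bounds on the principal curvatures from above, and $\s_k>0$ together with the lower barrier gives a positive lower bound for $\s_k(\ka)$ on compacts, hence uniform ellipticity there. Evans--Krylov and Schauder then promote these to $C^{2,\alpha}_{loc}$ bounds, and a diagonal/Arzelà--Ascoli argument produces a strictly convex entire solution $u$ of \eqref{soliton}.

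The asymptotic behavior \eqref{behaviour} follows by trapping $u$ between translates of the radial model solution matched to $\max\vp$ and $\min\vp$ and refining: since both barriers have the same leading term $\tilde{\C}|x|-\C^{-2}\sqrt[k]{(n-k)/n}\log|x|$ and differ only by the boundary oscillation of $\varphi$, a comparison argument using the angular dependence of $\varphi$ (solving the linearized problem at infinity, or using rotated radial barriers as in the Euclidean prescribed-curvature literature) pins the $\varphi\bigl(\tilde{\C}x/|x|\bigr)$ term. Uniqueness is a comparison-principle argument: if $u_1,u_2$ are two such solutions, their difference satisfies a linear elliptic inequality with no zeroth-order term of bad sign (here one uses $\psi_u$-type monotonicity, which for \eqref{soliton} is automatic because the right side is increasing in $|Du|$ but independent of $u$, combined with the matching asymptotics to rule out a maximum at infinity). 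Finally, the boundedness of the principal curvatures of $\M_u$ — the last assertion — is the genuinely new difficulty: the interior $C^2$ bounds above degenerate as $|x|\to\infty$, so one needs a \emph{global} curvature estimate. I would attempt this by a maximum-principle argument applied to a suitable quantity like $\log\la_{\max}+$ (a function of $|Du|$ or of the support function), exploiting that along the soliton the flow speed $\C-(1-|Du|^2)^{-1/2}$ and its gradient are controlled, and that the radial model has bounded curvature; the main obstacle is controlling the bad gradient terms in the evolution of $\la_{\max}$ near infinity where $1-|Du|^2\to 0$, which is precisely where the translating-soliton structure (the equation $\frac{d X}{dt}=-(\C-\cdots)\nu$) must be used to close the estimate rather than treating \eqref{soliton} as a static PDE.
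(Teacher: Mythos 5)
Your outline tracks the paper's strategy for the existence and asymptotics (radial model soliton from the ODE, rotated barriers $z_0^k(|x+p(\tilde{\C}y)|)$ built from $\varphi$, Dirichlet approximation with Pogorelov-type local $C^1/C^2$ estimates, comparison-principle uniqueness), although the paper handles the nondegeneracy issue differently: rather than adding a $1/R$ term to the right-hand side, for $k<n$ it passes to the Legendre dual and solves the dual equation \eqref{D5} on balls $B_\tau$ with $\tau<\tilde{\C}$, where the right-hand side is automatically bounded away from zero and convexity is built in. Your regularization scheme would also require you to separately verify convexity of the approximating solutions when $k<n$ and to control the limit $R\to\infty$ of the extra term, whereas the dual route makes those points automatic; this is a genuine alternative route, but note the paper does treat $k=n$ directly on convex domains $\Omega_t$, so the two-track structure is not optional.

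The serious gap is in the last and, as you yourself flag, ``genuinely new'' part: the global bound on the principal curvatures. You write that one should try $\log\la_{\max}$ plus ``a function of $|Du|$ or of the support function'' and that ``the main obstacle is controlling the bad gradient terms\dots near infinity where $1-|Du|^2\to 0$,'' but you do not say what actually closes the estimate. The paper's argument hinges on the quantitative balance
\[
d_1\ \leq\ u\,\bigl(\tilde{\C}^2-|Du|^2\bigr)\ \leq\ d_2 ,
\]
equivalently two-sided bounds on $u\bigl(\C+\langle\nu,E\rangle\bigr)$, which it derives from sharp Legendre-transform asymptotics (angular-derivative estimates and sup/inf radial barriers for $u^*$, Lemmas \ref{lemm17}--\ref{lemm21}). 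With that in hand, the test function is taken to be
\[
\phi=e^{-\frac{s}{s-u}}\,\bigl[u(\C+\langle\nu,E\rangle)\bigr]^{-N}P_m^{1/m},\qquad P_m=\sum_j\ka_j^m,
\]
and the $[u(\C+\langle\nu,E\rangle)]^{-N}$ factor is exactly what neutralizes the vanishing of the flow speed as $|x|\to\infty$. Without identifying such a compensating weight --- and without the preceding Legendre-dual estimates that justify its two-sided bound --- the maximum-principle computation cannot close, so this step of your proposal is not yet a proof but a restatement of the difficulty.
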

When $k=1,$ \eqref{soliton} has been studied in \cite{JLJ} and \cite{SX16}; when $k=2,$ \eqref{soliton} has been studied in \cite{Ba2020}.

\begin{rema}
\label{uniqueness rmk}
Under our assumptions on $\psi,$ we can see that the linearized operators of equations \eqref{curvature}, \eqref{curvature-k}, and \eqref{soliton}
satisfy the maximum principle. Therefore, the uniqueness properties in Theorem \ref{theo1}, \ref{theo2}, and \ref{theo3} follow
from the maximum principle directly.
\end{rema}

The rest of this paper is organized as follows. In Section \ref{pre}, we introduce some basic
formulas and notations. The solvability of equations \eqref{curvature} and \eqref{curvature-k} on bounded domain (Dirichlet problem) is discussed in Section \ref{Dirichlet}.
We prove the local $C^1$ and $C^2$ estimates for solutions of equations \eqref{curvature} and \eqref{curvature-k} in Section \ref{local C1}. This leads to the completion of the proof of our first two main results, Theorem \ref{theo1} and Theorem \ref{theo2}, in Section \ref{prescribed curvature}. Section \ref{rs} and Section \ref{existence for soliton} are devoted to Theorem \ref{theo3}. In particular, in Section \ref{rs}, we study the radially symmetric solution to equation \eqref{soliton}, this solution will be used to construct barrier functions in Section \ref{existence for soliton}. We finish the proof of
Theorem \ref{theo3} in Section \ref{existence for soliton}.

\bigskip
\section{Preliminaries}
\label{pre}
In this paper, we will follow notations in \cite{WX}. For readers convenience, we will include some basic
notations and formulas in this section. Readers who are already familiar with calculations in Minkowski space can skip this section.

We first recall that the Minkowski space $\R^{n,1}$ is $\R^{n+1}$ endowed with the Lorentzian metric
$$ds^2=dx_1^2+\cdots dx_{n}^2-dx_{n+1}^2.$$
Throughout this paper, $\lt<\cdot, \cdot\rt>$ denotes the inner product in $\R^{n,1}$.

\subsection {Vertical graphs in $\R^{n, 1}$}
\label{vg}
A spacelike hypersurface $\M$ in $\R^{n, 1}$ is a codimension one submanifold whose induced metric
is Riemannian. Locally $\M$ can be written as a graph
\[\M_u=\{X=(x, u(x))| x\in\R^n\}\]
satisfying the spacelike condition \eqref{int1.1}. Let
$E=(0, \cdots, 0, 1),$ then the height function of $\M$ is $u(x)=-\lt<X, E\rt>.$ It's easy to see that the induced metric and second fundamental form of $\M$ are given
by
$$g_{ij}=\delta_{ij}-D_{x_i}uD_{x_j}u, \ \  1\leq i,j\leq n,$$
and
\[h_{ij}=\frac{u_{x_ix_j}}{\sqrt{1-|Du|^2}},\]
while the timelike unit normal vector field to $\M$ is
\[\nu=\frac{(Du, 1)}{\sqrt{1-|Du|^2}},\]
where $Du=(u_{x_1}, \cdots, u_{x_n})$ and $D^2u=\lt(u_{x_ix_j}\rt)$ denote the ordinary gradient and Hessian of $u$,
respectively. By a straightforward calculation, we have the principle curvatures of $\M$ are eigenvalues of the symmetric matrix
$A=(a_{ij}):$
\[a_{ij}=\frac{1}{w}\ga^{ik}u_{kl}\ga^{lj},\]
where $\ga^{ik}=\delta_{ik}+\frac{u_iu_k}{w(1+w)}$ and $w=\sqrt{1-|Du|^2}.$ Note that $(\ga^{ij})$ is invertible with inverse
$\ga_{ij}=\delta_{ij}-\frac{u_iu_j}{1+w},$ which is the square root of $(g_{ij}).$

Let $\mathcal{S}$ be the vector of $n\times n$ symmetric matrices and
\[\mathcal{S}_k=\{A\in \mathcal{S}: \lambda(A)\in \Gamma_k\},\]
where $\lambda(A)=(\lambda_1, \cdots, \lambda_n)$ denotes the eigenvalues of $A.$
Define a function $F$ by
\[F(A)=\sigma_k(\lambda(A)),\,\, A\in\mathcal{S}_k,\]
then \eqref{curvature1} can be written as
\be\label{main equation graph}
F\lt(\frac{1}{w}\ga^{ik}u_{kl}\ga^{lj}\rt)=\psi(x, u(x), Du).
\ee
Throughout this paper we denote
\[F^{ij}(A)=\frac{\p F}{\p a_{ij}}(A),\,\,F^{ij, kl}=\frac{\p^2 F}{\p a_{ij}\p a_{kl}}.\]

Now, let $\{\tau_1,\tau_2,\cdots,\tau_n\}$ be a local orthonormal frame on $T\M$. We will use $\nabla$ to denote
the induced Levi-Civita connection on $\M.$ For a function $v$ on $\M$, we denote $v_i=\nabla_{\tau_i}v,$ $v_{ij}=\nabla_{\tau_i}\nabla_{\tau_j}v,$ etc.
In particular, we have
\[|\nabla u|=\sqrt{g^{ij}u_{x_i}u_{x_j}}=\frac{|Du|}{\sqrt{1-|Du|^2}}.\]

Using normal coordinates, we also need the following well known fundamental equations for a hypersurface $\M$ in $\R^{n, 1}:$
\begin{equation}\label{Gauss}
\begin{array}{rll}
X_{ij}=& h_{ij}\nu\quad {\rm (Gauss\ formula)}\\
(\nu)_i=&h_{ij}\tau_j\quad {\rm (Weigarten\ formula)}\\
h_{ijk}=& h_{ikj}\quad {\rm (Codazzi\ equation)}\\
R_{ijkl}=&-(h_{ik}h_{jl}-h_{il}h_{jk})\quad {\rm (Gauss\ equation)},\\
\end{array}
\end{equation}
and the Ricci identity,
\begin{equation}\label{a1.2}
\begin{array}{rll}
h_{ijkl}=& h_{ijlk}+h_{mj}R_{imlk}+h_{im}R_{jmlk}\\
=& h_{klij}-(h_{mj}h_{il}-h_{ml}h_{ij})h_{mk}-(h_{mj}h_{kl}-h_{ml}h_{kj})h_{mi}.\\
\end{array}
\end{equation}
\subsection{The Gauss map}
\label{gg}
Let $\M$ be an entire, strictly convex, spacelike hypersurface, $\nu(X)$ be the timelike unit normal vector to $\M$
at $X.$ It's well known that the hyperbolic space $\mathbb{H}^{n}(-1)$ is canonically embedded in $\R^{n, 1}$
as the hypersurface
\[\lt<X, X\rt>=-1,\,\, x_{n+1}>0.\]
By parallel translating to the origin we can regard $\nu(X)$
as a point in $\mathbb{H}^n(-1).$ In this way, we define the Gauss map:
\[G: \M\rightarrow \mathbb{H}^n(-1);\,\, X\mapsto\nu(X).\]

Next, let's consider the support function of $\M.$ We denote
\[v:=\lt<X, \nu\rt>=\frac{1}{\sqrt{1-|Du|^2}}\lt(\sum_ix_i\frac{\partial u}{\partial x_i}-u\rt).\]
Let $\{e_1, \cdots, e_n\}$ be an orthonormal frame on $\mathbb{H}^n.$ We will also denote
$\{e^*_1, \cdots, e^*_n\}$ the pull-back of $e_i$ by the Gauss map $G.$ Similar to the convex geometry case,
we denote
\[\Lambda_{ij}=v_{ij}-v\delta_{ij}\]
the hyperbolic Hessian. Here $v_{ij}$ denote the covariant derivatives with respect to the hyperbolic metric.

Let $\bar{\nabla}$ be the connection of the ambient space. Then, we have
$$X=\sum_iv_ie_i-v\nu$$ and
$$\bar{\nabla}_{e_j^*}X=\sum_k(e_j(v_k)e_k+v_k\bar{\nabla}_{e_j}e_k)-v_j\nu-v\bar{\nabla}_{e_j}\nu =\sum_k\Lambda_{kj}e_k.$$
Note also that,
\begin{eqnarray}
g_{ij}&=&\lt<\bar{\nabla}_{e^*_i}X, \bar{\nabla}_{e^*_j}X\rt>=\sum_k\Lambda_{ik}\Lambda_{kj},\\
h_{ij}&=&\lt<\bar{\nabla}_{e^*_i}X, \bar{\nabla}_{e_j}\nu\rt>=\Lambda_{ij}.
 \end{eqnarray}
This implies that the eigenvalues of the hyperbolic Hessian are the curvature radius of $\M$.
Therefore, equation \eqref{curvature} can be written as
\be\label{main equation hyperbolic}
F(v_{ij}-v\delta_{ij})=\frac{1}{\psi(X, \nu)},
\ee
where $F(A)=\frac{\s_n}{\s_{n-k}}(\lambda(A)).$
Moreover, it is clear that
\be\label{gg1.1}
\left(\bar{\nabla}_{e_j}\bar{\nabla}_{e_i}\nu\right)^{\bot}=\delta_{ij}\nu,
\ee
which yields, for $k=1,2\cdots,n+1$,
\be\label{gg1.2}
 \nabla_{e_j}\nabla_{e_i}x_k=x_k\delta_{ij},
\ee
where $x_k$ is the coordinate function.

\subsection{Legendre transform}
\label{lt}
Suppose $\M$ is an entire, stictly convex, spacelike hypersurface.
Then $\M$ is the graph of a convex function
\[x_{n+1}=-\lt<X, E\rt>=u(x_1, \cdots, x_n),\]
where $E=(0, \cdots, 0, 1).$
Introduce the Legendre transform
\[\xi_i=\frac{\T u}{\T x_i},\,\, u^*=\sum x_i\xi_i-u.\]

Next, we calculate the first and the second fundamental forms in terms of $\xi_i$.
Since it is well known that,
$$\left(\frac{\T^2 u}{\T x_i\T x_j}\right)=\left(\frac{\T^2 \us}{\T \xi_i\T \xi_j}\right)^{-1}.$$
We have, the first and the second fundamental forms can be rewritten as:
$$g_{ij}=\delta_{ij}-\xi_i\xi_j, \text{ and\,\,  } h_{ij}=\frac{u^{* ij}}{\sqrt{1-|\xi|^2}},$$
where $\lt(u^{* ij}\rt)$ denotes the inverse matrix of $(\us_{ij})$ and $|\xi|^2=\sum_i\xi_i^2$. Now, let $W$ be the Weingarten matrix of $\M,$ then
$$(W^{-1})_{ij}=\sqrt{1-|\xi|^2}g_{ik}\us_{kj}.$$

From the discussion above, we can see that if $\M_u=\{(x, u(x)) | x\in\R^n\}$ is an entire, strictly convex, spacelike
hypersurface satisfying $\sigma_{k}(\la[\M])=\psi,$ then the Legendre transform of
$u$ denoted by $\us,$ satisfies
\be\label{main equation legendre}
F(\w\gas_{ik}\us_{kl}\gas_{lj})=\frac{\sigma_n}{\sigma_{n-k}}(\la^*[\w\gas_{ik}\us_{kl}\gas_{lj}])=\frac{1}{\psi}.
\ee
Here, $\w=\sqrt{1-|\xi|^2}$ and $\gas_{ij}=\delta_{ij}-\frac{\xi_i\xi_j}{1+\w}$ is the square root of the matrix $g_{ij}.$

\bigskip
\section{The Dirichlet problem}
\label{Dirichlet}
We will divide this section into two subsections. In the first subsection, we only consider the convex solution to \eqref{curvature}.
In the second subsection, we restrict ourselves to the case when $k=n-1\,(n\geq 3), n-2\,(n\geq 5),$ and we will consider the $k$-convex, spacelike solution to \eqref{curvature-k}. When $k=2,$ this problem has been studied by \cite{Ba03} and \cite{U}.
\subsection{Dirichilet problem for $1\leq k\leq n$}
\label{Dirichlet sub1}
Recall that in \cite{WX} we proved the following Lemma.
\begin{lemm}
\label{lem-legendre-boundary}
Let $\F\subset\mathbb{S}^{n-1},$ $\tilde{F}=\text{Conv}(\F),$ and $\us$ be a solution of
\be\left\{
\begin{aligned}
\hF(\w\gas_{ik}\us_{kl}\gas_{lj})&=\frac{1}{\binom{n}{k}^{\frac{1}{k}}}\,\,\text{in $\tilde{F}$}\\
\us&=\vp\,\,\text{on $\partial\tilde{F},$}
\end{aligned}
\right.
\ee
where $\hF(\w\gas_{ik}\us_{kl}\gas_{lj})=\lt(\frac{\sigma_n}{\sigma_{n-k}}\rt)^{1/k}(\ka^*[\w\gas_{ik}\us_{kl}\gas_{lj}]).$ Then, the Legendre transform of $\us$ denoted by $u$ satisfies, when $\frac{x}{|x|}\in\F$
\be\label{cv0.1}
u(x)-|x|\goto-\vp\lt(\frac{x}{|x|}\rt)\,\,\mbox{as $|x|\goto\infty,$ uniformly}.
\ee
\end{lemm}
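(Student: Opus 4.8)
The plan is to establish the asymptotic behavior \eqref{cv0.1} by a barrier argument in the Legendre/hyperbolic picture, exploiting the fact that the Legendre transform converts the "graph at infinity" asymptotics into a Dirichlet boundary condition for $\us$ on $\partial\tilde F$. First I would recall the basic dictionary: if $\us$ solves the Dirichlet problem on $\tilde F$ with $\us = \vp$ on $\partial\tilde F$, then its Legendre transform $u$ is defined on the image of the gradient map $\xi = Du$; since $\us$ is a convex (spacelike) solution, the gradient image of $u$ is exactly the interior of $\tilde F$, and conversely $x = D\us(\xi)$. The statement $u(x) - |x| \to -\vp(x/|x|)$ as $|x|\to\infty$ with $x/|x|\in\F$ should be read as: along any ray in a direction $\omega\in\F$, $\xi = Du(x)$ approaches the boundary point $\omega^*\in\partial\tilde F$ that is "polar" to the ray direction, and the defect $u(x)-|x|$ converges to $-\us$ evaluated at that boundary point, which is $-\vp(\omega^*)$ by the boundary condition.

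The key steps, in order, would be: (1) Make precise the correspondence between directions $\omega = x/|x|\in\mathbb S^{n-1}$ and boundary points of $\tilde F$. Because $u$ is asymptotic to the light cone $|x|$, one checks that $|Du(x)|\to 1$ as $|x|\to\infty$, and more precisely that $Du(x) \to \omega$ when $x/|x|\to\omega$; the subtlety is that the relevant boundary point of $\tilde F = \mathrm{Conv}(\F)$ in the $\xi$-variable is the point of $\partial\tilde F$ where the outer normal is $\omega$ (for $\omega\in\F$, since $\F\subset\mathbb S^{n-1}$ consists of extreme points of $\tilde F$, this is just $\omega$ itself, i.e.\ $\xi\to\omega$). (2) Use the identity $\us(\xi) = \sum x_i\xi_i - u(x) = \langle x, \xi\rangle - u(x)$ together with $\xi = Du(x)$. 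Rearranging, $u(x) - \langle x, \xi\rangle = -\us(\xi)$. (3) Estimate $\langle x, \xi\rangle - |x| = \langle x, \xi - \omega\rangle$ where $\omega = x/|x|$; one needs this to go to $0$, which requires a rate: $|\xi - \omega| = o(1/|x|)$. This is where the $C^1$ decay estimate on $\us$ near $\partial\tilde F$ enters — equivalently, an estimate on how fast $Du$ approaches $\mathbb S^{n-1}$, governed by the strict convexity of $\us$ up to the boundary and the boundedness of $\psi$ (here $\binom{n}{k}$). (4) Combine: $u(x) - |x| = -\us(\xi) + \langle x, \xi-\omega\rangle \to -\us(\omega) + 0 = -\vp(\omega)$, using continuity of $\us$ up to $\partial\tilde F$ and the boundary condition. (5) Upgrade pointwise convergence to uniform convergence over $\{x/|x|\in\F\}$ by a compactness/equicontinuity argument, using uniform (in direction) versions of the estimates in steps (1) and (3).

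The main obstacle I expect is step (3): controlling the rate at which $Du(x)$ approaches the sphere, i.e.\ proving $\langle x,\xi-\omega\rangle\to 0$ rather than merely $\xi\to\omega$. Naively $|x|\to\infty$ while $|\xi-\omega|\to 0$ is an indeterminate product. The resolution should come from the geometry of the Legendre transform near a strictly convex boundary: writing $x = D\us(\xi)$ and Taylor expanding $\us$ near a boundary point $\omega\in\partial\tilde F$, the blow-up of $|x| = |D\us(\xi)|$ is tied to the blow-up of $(\us_{ij})^{-1} = (u_{ij})$, and the product $\langle x, \xi - \omega\rangle$ turns out to be comparable to $\us(\xi) - \us(\omega) - \langle D\us(\xi), \xi-\omega\rangle + (\us(\omega) - \vp(\omega))$-type quantities, which vanish in the limit by convexity of $\us$ and the boundary condition $\us|_{\partial\tilde F} = \vp$. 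Concretely, for $\omega$ an extreme point of $\tilde F$ one has $\us(\xi) \le \us(\omega) + \langle x, \xi - \omega\rangle$ is false in general, so instead I would use that $u(x) - |x| = -\us(\xi) + \langle x, \xi - \omega\rangle$ is monotone-trapped between $-\vp$ plus error terms that are controlled by $\osc$ of $\vp$ and of $\us$ near $\omega$, and then let $\xi\to\omega$. The other technical point, minor by comparison, is justifying uniformity in the direction $\omega$, which follows once the constants in the boundary gradient estimate for the (degenerate-at-the-boundary) Legendre equation are shown to depend only on $\|\vp\|_{C^2}$, $n$, $k$, and $\tilde F$.
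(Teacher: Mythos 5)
The overall plan — pass to the Legendre picture, read the boundary condition as the asymptotic value, and sandwich $u(x)-|x|$ — is the right shape, and it is the paper's approach in the sense that (as the paper notes right after the statement) the argument is \emph{independent of the equation} and uses only convexity and the boundary values. But there are two concrete problems in the write-up.

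First, and most importantly, you assert in the final paragraph that for an extreme point $\omega\in\partial\tilde F$ the inequality $\us(\xi)\leq \us(\omega)+\langle x,\xi-\omega\rangle$ (with $x=D\us(\xi)$) is ``false in general.'' It is in fact \emph{true}: it is nothing but the supporting-hyperplane inequality for the convex function $\us$ at the interior point $\xi$,
\[
\us(\omega)\;\geq\;\us(\xi)+\langle D\us(\xi),\,\omega-\xi\rangle
\;=\;\us(\xi)-\langle x,\xi-\omega\rangle,
\]
rearranged. You likely confused it with the supporting-hyperplane inequality \emph{at $\omega$}, which would require $D\us(\omega)$ and indeed cannot be used since $|D\us|\to\infty$ at $\partial\tilde F$. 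Once this sign confusion is fixed, the rate-of-convergence worry in your step (3) evaporates: you never need $|\xi-\omega|=o(1/|x|)$. Writing $\omega=x/|x|$ and $\xi=Du(x)$, you have $u(x)-|x|=\langle x,\xi-\omega\rangle-\us(\xi)$, and the two bounds
\[
\us(\xi)-\us(\omega)\;\leq\;\langle x,\xi-\omega\rangle\;=\;|x|\bigl(\langle\omega,\xi\rangle-1\bigr)\;<\;0
\]
(the left inequality is the supporting-hyperplane inequality above; the right one is the spacelike condition $|\xi|<1$) immediately trap $u(x)-|x|$ between $-\vp(\omega)$ and $-\us(\xi)$. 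Letting $\xi\to\omega$ and using continuity of $\us$ up to $\partial\tilde F$ with $\us=\vp$ there closes the sandwich, with no Taylor expansion and no second-order information about $\us$.

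Second, your uniformity argument in step (5) invokes boundary gradient estimates for the degenerate Legendre equation, which is both unnecessary and out of step with the paper's remark that the proof does not depend on the PDE. Uniformity follows from compactness alone: the supporting-hyperplane inequality already gives the direction-independent lower bound $u(x)\geq\langle x,\omega\rangle-\vp(\omega)\geq|x|-\max_{\partial\tilde F}\vp$; combining this with convexity of $t\mapsto u(t\omega)$ yields $1-\langle\xi,\omega\rangle\leq C/|x|$ and hence $|\xi-\omega|^2\leq 2C/|x|$ with $C$ independent of $\omega$, so $\xi\to\omega$ uniformly. The final step then only needs uniform continuity of $\us$ on the compact set $\overline{\tilde F}$. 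You should also make explicit (rather than assume) the identification $\xi\to\omega$; as written, your step (1) gestures at it without the one-line convexity-of-$u$-along-rays argument that actually proves it.
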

Notice that the proof of the above Lemma is independent of the equation that the function $\us$ satisfies.
Therefore, adapting the above Lemma to the settings in this paper, this Lemma tells us that if a strictly convex function $\us: B_1\goto\R$
 satisfies $\us(\xi)=-\varphi(\xi)$ for $\xi\in\p B_1,$ then the Legendre transform of $\us$ denoted by $u,$ satisfies $u(x)\goto |x|+\varphi\lt(\frac{x}{|x|}\rt)$ as $|x|\goto\infty.$
Moreover, by Theorem 4 in \cite{WX}, there exists two solutions $\lu$, $\uu$ such that
\[\sigma_k(\ka[\M_{\lu}])=c_1,\]
\[\sigma_k(\ka[\M_{\uu}])=c_2,\]
and as $|x|\goto\infty$
\[\lu(x)-|x|,\,\,\uu(x)-|x|\goto\varphi\lt(\frac{x}{|x|}\rt).\]
Here, the constants $c_1,$ $c_2$ are the same as the ones in Theorem \ref{theo1}. Throughout this paper, we will denote the Legendre transforms of
$\lu,$ $\uu$ by $\lus,$ $\uus$ respectively. It's easy to see that $\lus$ and $\uus$ are the super- and sub- solutions of \eqref{main equation legendre}.

Combining the discussions above with Section \ref{pre}, we conclude that in order to find an entire, strictly convex solution $u$ of
\eqref{curvature1}, we only need to solve the following equation:
\be\label{Legendre curvature1}
\left\{
\begin{aligned}
F(\w\gas_{ik}\us_{kl}\gas_{lj})&=\psi^*\,\,\text{in $B_1$},\\
\us&=-\varphi\,\,\text{on $\p B_1$,}
\end{aligned}
\right.
\ee
where
$$\psi^*(\xi,u^*,Du^*)=\frac{1}{\psi(x,u,Du)}=\frac{1}{\psi(Du^*,\xi\cdot Du^*-u^*,\xi)},$$ and
$$F(\w\gas_{ik}\us_{kl}\gas_{lj})=\frac{\sigma_n}{\sigma_{n-k}}(\la^*[\w\gas_{ik}\us_{kl}\gas_{lj}]).$$
Note that by our assumption in Theorem \ref{theo1} we have,
\begin{eqnarray}
\psi^*_{u^*}=\frac{\psi_u}{\psi^2}\geq 0.
\end{eqnarray}
Thus, equation \eqref{Legendre curvature1} possesses the maximum principle.

Notice that equation \eqref{Legendre curvature1} is degenerate on $\p B_1$. Therefore, we will consider the approximate equation:
\be\label{D-approximate}
\left\{
\begin{aligned}
F(\w\gas_{ik}\us_{kl}\gas_{lj})&=\psi^*\,\,\text{in $B_r$},\\
\us&=\lu^*\,\,\text{on $\p B_r$}
\end{aligned}
\right.
\ee
where $0<r<1.$

By continuity method we know that, if we can obtain a prior estimates up to the second order, then we can show \eqref{D-approximate} has a unique, strictly convex solution $\urs.$ In view of the super- and sub- solutions $\lu^*,\uu^*$, the $C^0$ estimates are easy to obtain. The $C^1$ estimates can be derived by following the argument in Subsection 9.2 of \cite{RWX}. The $C^2$  estimate on the boundary can be derived from  Lemma 27 in \cite{RWX} and the argument of Bo Guan \cite{Guan}. In the following, we only need to consider the global $C^2$ estimate.

Let $\M_u=\{(x, u(x))| x\in\R^n\}$ be a strictly convex, spacelike hypersurface, $v=\lt<X, \nu\rt>$ be the support function of $\M_u,$ and $\us$ be the Legendre transform of $u.$  From Subsection \ref{gg} and \ref{lt}, we know that $\lambda[v_{ij}-v\delta_{ij}]=\ka^*[\w\gas_{ik}\us_{kl}\gas_{lj}].$ Therefore, to study the global $C^2$ estimate of \eqref{D-approximate} is equivalent to study the global $C^2$ estimate of \eqref{main equation hyperbolic}.

For our convenience,
we will consider the equation
\be
\label{D6}
\hat{F}(\Lambda)=\left(\frac{\sigma_n}{\sigma_{n-k}}\right)^{\frac{1}{k}}(\Lambda)=\tilde{\psi},
\ee
where $\Lambda=(\Lambda_{ij})=(v_{ij}-v\delta_{ij}),$ $\tilde{\psi}=\psi^{-1/k}(X,\nu),$ and $v_{ij}$ is the covariant derivatives with respect to the hyperbolic metric.

We will use $\lambda[\Lambda]=(\lambda_1,\lambda_2,\cdots,\lambda_n)$ to denote the eigenvalues of the matrix $\Lambda$.
We define the Riemann curvature tensor:
$$R(X,Y)=\nabla_X\nabla_Y-\nabla_Y\nabla_X-\nabla_{[X,Y]}.$$
Let $\{e_1,e_2,\cdots,e_n \}$ be an orthonormal frame on $\mathbb{H}^n,$ we use the notation
$$R_{ijkl}=R(e_i,e_j)e_k\cdot e_l,\,\, R^l_{ijk}=g^{lp}R_{ijkp}.$$ Then the commutation formulae are
$$v_{ijk}-v_{ikj}=R^l_{jki}v_l,\,\, v_{ijkl}-v_{ijlk}=R_{kli}^mv_{jm}+R^{m}_{klj}v_{im}.$$
Note that in hyperbolic space we have,
$$R_{ijkl}=g_{ik}g_{jl}-g_{il}g_{jk}.$$
Therefore, given an orthonormal frame on $\mathbb{H}^n,$ we obtain the following geometric formulae:
\begin{eqnarray}\label{comm}
\Lambda_{ijk}&=&\Lambda_{ikj}\\
\Lambda_{lkji}-\Lambda_{lkij}&=&v_{lkji}-v_{lkij}\nonumber\\
&=&-v_{lj}\delta_{ik}+v_{li}\delta_{jk}-v_{jk}\delta_{il}+v_{ik}\delta_{jl}\nonumber.
\end{eqnarray}
We will prove
\begin{lemm}
\label{lem-c2-global}
Let $v$ be the solution of \eqref{D6} in a bounded domain $U\subset\mathbb{H}^n$. Denote the eigenvalues of $(v_{ij}-v\delta_{ij})$ by $\lambda[v_{ij}-v\delta_{ij}] =(\lambda_1, \cdots, \lambda_n).$ Then
\[\lambda_{\max}\leq \max\{C, \lambda|_{\p U}\},\]
where $\lambda_{\max}=\max\{\lambda_1, \cdots, \lambda_n\},$ and $C$ is a positive constant only depending on  $U$ and $\tilde{\psi}.$
\end{lemm}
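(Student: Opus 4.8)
The plan is to derive an interior a priori bound for the largest eigenvalue $\lambda_{\max}$ of the hyperbolic Hessian $\Lambda_{ij}=v_{ij}-v\delta_{ij}$ by the maximum principle applied to a carefully chosen test function. Following the standard philosophy for $C^2$ estimates of Hessian-type equations, I would consider a quantity of the form
\[
Q = \log \lambda_{\max} + \Phi(v) + \Psi(|\nabla v|^2)
\]
(or, to avoid the lack of smoothness of $\lambda_{\max}$ when the top eigenvalue has multiplicity, replace $\log\lambda_{\max}$ by $\log\Lambda_{\xi\xi}$ for a unit vector $\xi$ realizing the maximum, frozen at the maximum point and extended by parallel transport, which is the usual trick). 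Here $\Phi$ and $\Psi$ are auxiliary one-variable functions to be chosen; since $v=\langle X,\nu\rangle>0$ on a spacelike convex hypersurface and the domain $U$ is bounded, $v$ and $|\nabla v|$ are already under control, so these terms are genuine aids. If $Q$ attains its maximum on $\partial U$ we are done by the hypothesis; otherwise we work at an interior maximum $p$.

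At the interior maximum point I would differentiate $Q$ once to get $\nabla Q(p)=0$, and twice to get $\nabla^2 Q(p)\leq 0$, then contract the second inequality with the positive-definite linearized coefficient matrix $\hat F^{ij}=\partial \hat F/\partial \Lambda_{ij}$, using that $\hat F = (\sigma_n/\sigma_{n-k})^{1/k}$ is concave on the relevant cone (this is where $\frac{\sigma_n}{\sigma_{n-k}}$, i.e. $\sigma_k$ of the inverse, enters — concavity of $(\sigma_n/\sigma_{n-k})^{1/k}$ is classical). The key computational input is the commutation formula recorded in \eqref{comm}: differentiating the equation \eqref{D6} twice and commuting covariant derivatives to bring all fourth derivatives into the form $\Lambda_{\xi\xi ii}$ produces, besides the good term $\hat F^{ij,kl}\Lambda_{\xi ij}\Lambda_{\xi kl}\leq 0$ (concavity) and the term $\sum \hat F^{ii}\Lambda_{\xi\xi ii}$ (handled by differentiating the equation), curvature correction terms of the type $\sum \hat F^{ii}(\Lambda_{ii}-v)\lambda_{\max}$ and $\hat F^{ii}\lambda_{\max}$ coming from the $-v_{lj}\delta_{ik}+\cdots$ terms in \eqref{comm}. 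These hyperbolic-curvature terms are the crucial feature: unlike the Euclidean case they have a favorable sign structure because $\sum_i \hat F^{ii}\Lambda_{ii} = \hat F(\Lambda) = \tilde\psi$ by homogeneity, so $\sum_i \hat F^{ii}(\Lambda_{ii}-v)$ is bounded, and more importantly the term $-v\,\delta$ in $\Lambda$ contributes a term $+v\sum\hat F^{ii}$ which, combined with the concavity-type inequality $\sum\hat F^{ii}\geq c(n,k)>0$ on a sublevel set, gives a strong negative contribution $-c\,v\,\lambda_{\max}$ after dividing by $\lambda_{\max}$ — this is the mechanism that closes the estimate without needing any convexity of $\tilde\psi$ when $\psi$ depends only on $\nu$.

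The role of the hypothesis on $\psi$ (either $\psi^{-1/k}$ locally convex in $X$, or $\psi$ depending only on $\nu$) appears precisely in the terms coming from differentiating $\tilde\psi = \psi^{-1/k}(X,\nu)$ twice: one gets $(\tilde\psi)_{\xi\xi} = \tilde\psi_{X_aX_b}X^a_\xi X^b_\xi + (\text{$\nu$-derivatives}) + (\text{mixed})$, and using the Gauss and Weingarten formulas \eqref{Gauss}, \eqref{gg1.1}, \eqref{gg1.2} the position vector $X$ satisfies $\nabla^2 X = X\,g$ in these coordinates, so the second derivatives of the $X$-dependence are controlled exactly when $\tilde\psi$ is convex in $X$; the mixed and pure $\nu$-terms are lower order (bounded by $C(1+\lambda_{\max})$) and get absorbed. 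I expect the main obstacle to be the bookkeeping of the curvature terms from \eqref{comm} and verifying that their aggregate sign, together with the $+v\sum\hat F^{ii}$ term, dominates the bad terms $C\lambda_{\max}$ from the right-hand side and from $\Psi(|\nabla v|^2)$; concretely, one must choose $\Phi$ and $\Psi$ with large enough derivatives (e.g. $\Phi(v)=-N\log v$ or $\Phi(v)=e^{-Nv}$ and $\Psi(t)=-\tfrac12\log(A-t)$ type) so that the first-order relation $\nabla Q=0$ can be used to cancel the first-derivative cross terms $\hat F^{ii}\Lambda_{\xi ii}\nabla_i v/\lambda_{\max}$, which is the standard but delicate Cauchy–Schwarz juggling. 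Once $\lambda_{\max}(p)$ is bounded at the interior maximum, $Q\leq C$ everywhere gives the claimed bound $\lambda_{\max}\leq\max\{C,\lambda|_{\partial U}\}$.
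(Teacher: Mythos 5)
Your proposal has the right overall architecture — the maximum principle applied to the logarithm of the largest eigenvalue of $\Lambda_{ij}$ plus auxiliary terms, commuting with \eqref{comm}, concavity of $\hat F$, and the convexity of $\psi^{-1/k}$ entering through $\tilde\psi_{11}$. But there is a genuine gap in how you propose to kill the bad third–order term $\hat F^{ii}\Lambda_{11i}^2/\Lambda_{11}^2$. You relegate this to ``standard but delicate Cauchy–Schwarz juggling'' enabled by auxiliary $\Phi(v)$ and $\Psi(|\nabla v|^2)$ terms, but the paper does not (and cannot, this way) close the estimate by that route. The mechanism that actually works is a precise structural inequality special to $\hat F=(\sigma_n/\sigma_{n-k})^{1/k}$: from \eqref{F} one computes
\[
k\hat F^{k-1}\Bigl(\tfrac{\hat F^{ii}-\hat F^{11}}{\lambda_1-\lambda_i}-\tfrac{\hat F^{ii}}{\lambda_1}\Bigr)=\tfrac{\sigma_{n-1}(\lambda|1)\,\sigma_{n-k-1}(\lambda|1i)}{\sigma_{n-k}^2}\geq 0\quad(i\geq 2),
\]
which is \eqref{3.14}. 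This lets the off–diagonal concavity terms $\frac{\hat F^{ii}-\hat F^{11}}{\lambda_1-\lambda_i}\Lambda_{11i}^2/\Lambda_{11}$ dominate $\hat F^{ii}\Lambda_{11i}^2/\Lambda_{11}^2$ for all $i\neq 1$, leaving only $-\hat F^{11}\Lambda_{111}^2/\Lambda_{11}^2$, and that last piece is small because $\hat F^{11}\to 0$ as $\lambda_1\to\infty$ (combined with the first–order relation $\Lambda_{111}/\Lambda_{11}=-N(x_{n+1})_1$). Without \eqref{3.14}, a generic Cauchy–Schwarz argument with a $\Psi(|\nabla v|^2)$ term is not known to close for this operator, and you would need to either prove something equivalent to \eqref{3.14} (which is the real content of the lemma) or invoke inverse–concavity machinery you don't cite.

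Two smaller points. First, the paper's auxiliary function is $Nx_{n+1}$, not $\Phi(v)+\Psi(|\nabla v|^2)$; the relevant facts are $(x_{n+1})_{ij}=x_{n+1}\delta_{ij}$ from \eqref{gg1.2} and $x_{n+1}\geq 1$ on $\mathbb{H}^n$, which produce a clean good term $(Nx_{n+1}-1)\sum_i\hat F^{ii}\geq (N-1)\binom{n}{k}^{-1/k}$ after using homogeneity and concavity as in \eqref{3.21}. This is what absorbs the bounded negative curvature contribution and the bounded remainders from $\tilde\psi_{11}$. Second, your account of the ``favorable sign'' of the hyperbolic curvature is muddled: the commutation \eqref{comm} produces $\hat F^{ii}\Lambda_{ii}-\Lambda_{11}\sum\hat F^{ii}$, and after dividing by $\Lambda_{11}$ the second piece is $-\sum\hat F^{ii}$, which is a \emph{bad} (though bounded) term; there is no $+v\sum\hat F^{ii}$ arising from ``$-v\delta$ in $\Lambda$'' via the commutation. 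The positive $\sum\hat F^{ii}$ contribution comes entirely from the auxiliary $Nx_{n+1}$ (or, if you insist on $\Phi(v)$, from $\Phi'(v)v\sum\hat F^{ii}$ via the split $v_{ij}=\Lambda_{ij}+v\delta_{ij}$ — a different mechanism and one requiring a positive lower bound on $v$ that the paper avoids needing).
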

\begin{proof}Set
$$M=\max\limits_{P\in \overline{U}}\max\limits_{|\xi|=1, \xi\in T_{P}\mathbb{H}^n}\left(\log \Lambda_{\xi\xi}+Nx_{n+1}\right),$$
where $x_{n+1}$ is the coordinate function.
Without loss of generality, we assume $M$ is achieved at an interior point $P_0\in U$ for some direction $\xi_0.$ Chose an orthonormal frame $\{e_1, \cdots, e_n\}$ around $P_0$ such that $e_1(P_0)=\xi_0$ and $\Lambda_{ij}(P_0)=\lambda_i\delta_{ij}.$

Now, let's consider the test function
$$\phi=\log \Lambda_{11}+Nx_{n+1}.$$
At its maximum point $P_0$, we have
\begin{eqnarray}\label{3.6}
0=\phi_i&=&\frac{\Lambda_{11i}}{\Lambda_{11}}+N(x_{n+1})_i\\
0\geq \phi_{ii}&=&\frac{\Lambda_{11ii}}{\Lambda_{11}}-\frac{\Lambda_{11i}^2}{\Lambda_{11}^2}+N(x_{n+1})_{ii}.
\end{eqnarray}
Note that $(x_{n+1})_{ij}=x_{n+1}\delta_{ij}$, thus
\begin{eqnarray}\label{3.8}
\hF^{ii}\phi_{ii}=\frac{\hF^{ii}\Lambda_{11ii}}{\Lambda_{11}}-\frac{\hF^{ii}\Lambda_{11i}^2}{\Lambda_{11}^2}+Nx_{n+1}\sum_i \hF^{ii}.
\end{eqnarray}
In view of \eqref{comm}, we get
\begin{eqnarray}\label{3.9}
\Lambda_{11ii}=\Lambda_{i11i}=\Lambda_{i1i1}+v_{ii}-v_{11}=\Lambda_{ii11}+\Lambda_{ii}-\Lambda_{11}.\nonumber
\end{eqnarray}
This yields,
\begin{eqnarray}\label{3.10}
\hF^{ii}\Lambda_{11ii}=\hF^{ii}\Lambda_{ii11}+\hF^{ii}\Lambda_{ii}-\Lambda_{11}\sum_{i}\hF^{ii}.
\end{eqnarray}
Differentiating equation \eqref{D6} twice we obtain,
\begin{eqnarray}\label{3.11}
\hF^{ii}\Lambda_{ii11}&=&-\hF^{pq,rs}\Lambda_{pq1}\Lambda_{rs1}+\tilde{\psi}_{11}\\
&=&-\hF^{pp,qq}\Lambda_{pp1}\Lambda_{qq1}-\sum_{p\neq q}\frac{\hF^{pp}-\hF^{qq}}{\lambda_p-\lambda_q}\Lambda_{pq1}^2+\tilde{\psi}_{11}.\nonumber
\end{eqnarray}
By the concavity of $(\sigma_n/\sigma_{n-k})^{1/k}$ we can see that the first term on the right hand side is nonnegative.
Combining \eqref{3.8}-\eqref{3.11} we have,
\begin{eqnarray}\label{3.12}
\\
\hF^{ii}\phi_{ii}&\geq &\frac{\tilde{\psi}_{11}}{\Lambda_{11}}-\frac{1}{\Lambda_{11}}\sum_{p\neq q}\frac{\hF^{pp}-\hF^{qq}}{\lambda_p-\lambda_q}\Lambda_{pq1}^2-\frac{\hF^{ii}\Lambda_{11i}^2}{\Lambda_{11}^2}+(Nx_{n+1}-1)\sum_i \hF^{ii}\nonumber\\
&\geq&\frac{\tilde{\psi}_{11}}{\Lambda_{11}}+\frac{1}{\Lambda_{11}}\sum_{i\neq 1}\frac{\hF^{ii}-\hF^{11}}{\lambda_1-\lambda_i}\Lambda_{11i}^2-\frac{\hF^{ii}\Lambda_{11i}^2}{\Lambda_{11}^2}+(Nx_{n+1}-1)\sum_i \hF^{ii}\nonumber
\end{eqnarray}
We need an explicit expression of $\hF^{ii}$. A straightforward calculation gives
\begin{eqnarray}\label{FF}
k\hF^{k-1}\hF^{ii}=\frac{\sigma_n^{ii}\sigma_{n-k}-\sigma_n\sigma_{n-k}^{ii}}{\sigma_{n-k}^2},
\end{eqnarray}
where for $1\leq l\leq n,$ $\sigma^{ii}_l=\frac{\p\s_l}{\p\lambda_i}.$
Since
\begin{eqnarray}
&&\sigma_n^{ii}\sigma_{n-k}-\sigma_n\sigma_{n-k}^{ii}\nonumber\\
&=&\sigma_{n-1}(\lambda|i)(\lambda_i\sigma_{n-k-1}(\lambda|i)+\sigma_{n-k}(\lambda|i))
-\lambda_i\sigma_{n-1}(\lambda|i)\sigma_{n-k-1}(\lambda|i)\nonumber\\
&=&\sigma_{n-1}(\lambda|i)\sigma_{n-k}(\lambda|i)\nonumber.
\end{eqnarray}
Here and in the following, $\s_l(\lambda|a)$ and $\s_l(\lambda|ab)$ are the $l$-th elementary symmetric polynomials of
$\lambda_1, \cdots, \lambda_n$ with $\lambda_a=0$ and $\lambda_a=\lambda_b=0,$ respectively.
It follows
\begin{eqnarray}\label{F}
k\hF^{k-1}\hF^{ii}=\frac{\sigma_{n-1}(\lambda|i)\sigma_{n-k}(\lambda|i)}{\sigma_{n-k}^2}.
\end{eqnarray}
Therefore, we get
\begin{eqnarray}
&&k\hF^{k-1}(\hF^{ii}-\hF^{11})\\
&=&\frac{1}{\sigma_{n-k}^2}[\sigma_{n-1}(\lambda|i)\sigma_{n-k}(\lambda|i)-\sigma_{n-1}(\lambda|1)\sigma_{n-k}(\lambda|1)]\nonumber\\
&=&\frac{\sigma_{n-2}(\lambda|1i)}{\sigma_{n-k}^2}[\lambda_1\sigma_{n-k}(\lambda|i)-\lambda_i\sigma_{n-k}(\lambda|1)]\nonumber\\
&=&\frac{\sigma_{n-2}(\lambda|1i)(\lambda_1-\lambda_i)}{\sigma_{n-k}^2}[(\lambda_1+\lambda_i)\sigma_{n-k-1}(\lambda|1i)+\sigma_{n-k}(\lambda|1i)]\nonumber.
\end{eqnarray}
When $i\geq 2$, we can see that
\begin{eqnarray}\label{3.14}
&&k\hF^{k-1}\left(\frac{\hF^{ii}-\hF^{11}}{\lambda_1-\lambda_i}-\frac{\hF^{ii}}{\lambda_1}\right)\\
&=&\frac{\sigma_{n-2}(\lambda|1i)}{\sigma_{n-k}^2}[(\lambda_1+\lambda_i)\sigma_{n-k-1}(\lambda|1i)+\sigma_{n-k}(\lambda|1i)-\sigma_{n-k}(\lambda|i)]\nonumber\\
&=&\frac{\sigma_{n-2}(\lambda|1i)}{\sigma_{n-k}^2}\lambda_i\sigma_{n-k-1}(\lambda|1i)\nonumber\\
&=&\frac{\sigma_{n-1}(\lambda|1)}{\sigma_{n-k}^2}\sigma_{n-k-1}(\lambda|1i)\nonumber\\
&>&0.\nonumber
\end{eqnarray}
Plugging \eqref{3.14} into \eqref{3.12}, we obtain
\begin{eqnarray}\label{3.17}
\hF^{ii}\phi_{ii}&\geq& \frac{\tilde{\psi}_{11}}{\Lambda_{11}}-\hF^{11}\frac{\Lambda_{11i}^2}{\Lambda_{11}^2}+(Nx_{n+1}-1)\sum_i\hF^{ii}\\
&=&\frac{\tilde{\psi}_{11}}{\Lambda_{11}}-\hF^{11}N^2(y_{n+1})_1^2+(Nx_{n+1}-1)\sum_i\hF^{ii}.\nonumber
\end{eqnarray}
Here, in the last equality, we have used \eqref{3.6}.

Now, let's calculate $\tilde{\psi}_{11}$.  We denote the connection of the ambient space by $\bn$, and $\{e_1^*,e_2^*,\cdots,e^*_n\}$ denotes the pull back of $\{e_1,e_2,\cdots,e_n\}$ via the Gauss map.
Differentiating $\td{\psi}$ with respect to $e_1$ twice we get,
\be
\tilde{\psi}_{1}=d_X\psi^{-1/k}(\bn_{e^*_1}X)+d_{\nu}\psi^{-1/k}(e_1),
\ee
and
\begin{eqnarray}\label{D1.0}
\tilde{\psi}_{11}&=&d_Xd_X\psi^{-1/k}(\bn_{e^*_1}X,\bn_{e^*_1}X)+d_X\psi^{-1/k}(\bn_{e_1}\bn_{e^*_1}X)\\
&+&2d_Xd_{\nu}\psi^{-1/k}(e_1, \bn_{e^*_1}X)+d_{\nu}d_{\nu}\psi^{-1/k}(e_1,e_1)+d_{\nu}\psi^{-1/k}(\bn_{e_1}e_1)\nonumber\\
&\geq&c_0\Lambda_{11}^2+d_X\psi^{-1/k}(\bn_{e_1}\sum_k\Lambda_{k1}e_k)+2d_Xd_{\nu}\psi^{-1/k}(e_1, \sum_l\Lambda_{l1}e_l)\nonumber\\
&+&d_{\nu}d_{\nu}\psi^{-1/k}(e_1,e_1)+d_{\nu}\psi^{-1/k}(\nu)\nonumber\\
&\geq&c_0\Lambda_{11}^2+\sum_kd_X\psi^{-1/k}(\Lambda_{k11}e_k+\Lambda_{k1}\delta_{k1}\nu)-C\lambda_{1}-C\nonumber\\
&\geq&c_0\Lambda_{11}^2+\sum_k\Lambda_{11k}d_X\psi^{-1/k}(e_k)-C\lambda_1-C\nonumber,
\end{eqnarray}
where the first inequality comes from the locally strict convexity assumption on $\psi^{-1/k},$ i.e., for any spacelike vector $\xi\in\mathbb{R}^{n,1}$,
$$d_Xd_X\psi^{-1/k}(\xi,\xi)\geq c_0|\xi|^2_{E}\geq c_0|\xi|^2_{M}.$$ Here $c_0>0$ is some constant depending on the defining domain, and $|\cdot|_E,|\cdot|_M$ are the Euclidean norm and Minkowski norm respectively.
At the point $P_0$, in view of \eqref{3.6} and the assumption that $\psi_{x_{n+1}}\geq 0$ , we derive
\be\label{3.19}
\begin{aligned}
\frac{\tilde{\psi}_{11}}{\Lambda_{11}}&\geq c_0\lambda_1-N\sum_k(x_{n+1})_kd_X\psi^{-1/k}(e_k)-C-\frac{C}{\lambda_1}\\
&=c_0\lambda_1+\frac{N}{k}\psi^{-1/k-1}d_X\psi(\nabla x_{n+1})-C-\frac{C}{\lambda_1}\\
&=c_0\lambda_1+\frac{N}{k}\psi^{-1/k-1}d_X\psi\left(-\frac{\p}{\p x_{n+1}}+x_{n+1}\nu\right)-C-\frac{C}{\lambda_1}\\
&=c_0\lambda_1+\frac{N}{k}\psi^{-1/k-1}d_X\psi\left(|x|^2\frac{\p}{\p x_{n+1}}+x_{n+1}\sum_{i=1}^n x_{i}\frac{\p}{\p x_i}\right)-C-\frac{C}{\lambda_1}\\
&=c_0\lambda_1+\frac{N|x|^2}{k}\psi^{-1/k-1}\frac{\p \psi}{\p x_{n+1}}+\frac{N}{k}\psi^{-1/k-1}x_{n+1}\sum_{i=1}^nx_i\frac{\p \psi}{\p x_{i}}-C-\frac{C}{\lambda_1}\\
&\geq c_0\lambda_1+\frac{N}{k}\psi^{-1/k-1}x_{n+1}\sum_{i=1}^nx_i\frac{\p \psi}{\p x_{i}}-C-\frac{C}{\lambda_1}\\
&\geq-C-\frac{C}{\lambda_1}.
\end{aligned}
\ee
Here, in the last inequality we have assumed $\lambda_1=\lambda_1(|\psi|_{C^2})>0$ is large at $P_0$.
On the other hand, note that the functional $\hF$ is concave and homogenous of degree one. Therefore,
\be\label{3.21}
\begin{aligned}
\sum_{i}\hF^{ii}&=\hF(\lambda)+\sum_{i}\hF^{ii}(1-\lambda_i)\\
&\geq\hF(1)=\binom{n}{k}^{-1/k}.
\end{aligned}
\ee
Combining \eqref{3.17}-\eqref{3.21}, we obtain
\begin{eqnarray}
0\geq \hF^{ii}\phi_{ii}&\geq& -C-\frac{C}{\lambda_1}-\frac{C}{\lambda_1}N^2(x_{n+1})_1^2+(Nx_{n+1}-1)\binom{n}{k}^{-1/k}.\nonumber
\end{eqnarray}
Let $N, \lambda_1$ be sufficiently large, then we obtain a contradiction. This completes the proof of Lemma \ref{lem-c2-global}.

Notice that this is the only place we need to use the locally strict convexity assumption of $\psi^{-1/k}$ in
Theorem \ref{theo1}. It's also clear that the above proof can be easily modified to the case when $\psi^{-1/k}$ is convex with respect to $X$ and the corresponding $\psi(x,u(x),Du)$ does not depend on
$|x|$ (see the second inequality in \eqref{3.19}), as stated in the Remark \ref{rema2}. Therefore, \eqref{D-approximate} is solvable when either $\psi^{-1/k}$ is locally strictly convex with respect to $X$ or $\psi^{-1/k}$ is convex with respect to $X$ and $\psi(x, u(x), Du(x))$ does not depend on $|x|.$
\end{proof}
\par
\subsection{Dirichilet problem for $k=n-1, n-2$}
\label{Dirichlet sub2}
Let $n\in\mathbb{N}$ and $\Omega_n:=\{x\in\R^n|\lu(x)=n\},$  we will consider the following
Dirichlet problem:
\be\label{Dirichlet curvature}
\left\{
\begin{aligned}
\s_k(\ka[\M_u])&=\psi(x, u(x))\,\,\text{in $\Omega_n$},\\
u&=n\,\,\text{on $\p \Omega_n$.}
\end{aligned}
\right.
\ee
Note that since $\lu$ is strictly convex, $\Omega_n$ is strictly convex.
It's easy to see that if $u$ is a solution of \eqref{Dirichlet curvature}, then $\lu\leq u\leq \uu.$ Therefore, in order to find a $k$-convex solution $u$ for
\eqref{Dirichlet curvature}, we only need to study the
$C^1$ and $C^2$ estimates of $u.$
\subsubsection{$C^1$ estimate for equation \eqref{Dirichlet curvature}}
\begin{lemm}
\label{lem gradient}
Let $u$ be a solution of \eqref{Dirichlet curvature}, then $|Du|<C<1.$ Here $C$ is a constant depending on $|D\lu|_{\bar{\Omega}_n}$ and $\psi.$
\end{lemm}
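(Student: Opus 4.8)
The plan is to obtain the bound in two stages: a gradient bound on $\p\Omega_n$ from comparison functions, and then a global bound via a maximum–principle argument on $\M_u$ that reduces the interior maximum of $|Du|$ to the boundary. Throughout I would write $\mu:=(1-|Du|^2)^{-1/2}$, which on $\M_u$ equals $-\langle\nu,E\rangle$ with $E=(0,\dots,0,1)$; bounding $|Du|$ away from $1$ is the same as bounding $\mu$ from above.

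For the boundary stage I would use two barriers. Since $\lu\le u$ in $\Omega_n$ with $\lu=u=n$ on $\p\Omega_n$, the function $u-\lu\ge 0$ vanishes on $\p\Omega_n$, so its outer normal derivative is $\le 0$ there and $\p_\nu u\le\p_\nu\lu\le|D\lu|_{\bar\Omega_n}<1$ on $\p\Omega_n$. For a lower bound on $\p_\nu u$ I would use the constant $n$ as an upper barrier: from $\nabla^2_{\M_u}u=\mu\,h_{ij}$ and $\ka[\M_u]\in\Gamma_k\subset\Gamma_1$ one gets $\Delta_{\M_u}u=\mu\,\s_1(\ka[\M_u])>0$, so $u$ has no interior maximum on $\M_u$ and hence $u\le n$ in $\Omega_n$; then $n-u\ge 0$ vanishes on $\p\Omega_n$ and $\p_\nu u\ge 0$ there. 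As $u\equiv n$ on $\p\Omega_n$ its tangential derivatives vanish, so $|Du|=\p_\nu u\le|D\lu|_{\bar\Omega_n}<1$ on $\p\Omega_n$, and in particular $\mu\le(1-|D\lu|^2_{\bar\Omega_n})^{-1/2}$ there.

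For the global stage I would consider $\Phi:=\log\mu+Nu$ on $\overline{\M_u}$, with $N$ a large constant (depending only on $\psi$ and $\lu$) chosen at the end. If $\Phi$ is maximized on $\p\Omega_n$, the boundary bound controls $\mu$ there, and since $\lu\le u\le n$ is bounded, this controls $\mu$ on $\bar\Omega_n$; and if it is maximized at an interior point $p$ with $Du(p)=0$, then $\Phi(p)=Nu(p)$ is bounded and we win again. So suppose the maximum occurs at an interior point $p$ with $\mu(p)>1$, and pick an orthonormal frame $\{\tau_i\}$ on $\M_u$ diagonalizing the second fundamental form at $p$, with principal curvatures $\lambda_i$. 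The identities $\nabla_i\mu=-h_{ik}\langle\tau_k,E\rangle=-\lambda_i\langle\tau_i,E\rangle$ and $\nabla_i u=-\langle\tau_i,E\rangle$, together with $\nabla_i\Phi=0$, force $(\lambda_i+N\mu)\langle\tau_i,E\rangle=0$ for all $i$; since $\sum_i\langle\tau_i,E\rangle^2=\mu^2-1>0$, the index set $I_0:=\{i:\langle\tau_i,E\rangle\ne0\}$ is nonempty and $\lambda_i=-N\mu$ for every $i\in I_0$. This relation is the heart of the argument.

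Now I would differentiate. One computes $\nabla_i\nabla_j\mu=-h_{ijk}\langle\tau_k,E\rangle+\mu\,h_{ik}h_{jk}$, and since $F^{ij}h_{ijk}=\nabla_k\psi$ with $|\nabla_k\psi|\le C(1+|\nabla u|)$ — $C$ depending on $\|\psi\|_{C^1}$, $\dm\,\Omega_n$ and $\sup_{\bar\Omega_n}|u|$, hence on $\psi$ and $\lu$ — this gives $\mu^{-1}F^{ij}\nabla_i\nabla_j\mu\ge F^{ij}h_{ik}h_{jk}-C\mu$. Feeding this, the critical relation $\nabla_i\mu=-N\mu\,\nabla_i u$, and $F^{ij}\nabla_i\nabla_j u=\mu F^{ij}h_{ij}=k\mu\,\s_k=k\mu\,\psi\ge kc_2\mu$ into $0\ge F^{ij}\nabla_i\nabla_j\Phi$ yields
\[
0\ \ge\ F^{ij}h_{ik}h_{jk}-N^2F^{ij}u_iu_j+kc_2N\mu-C\mu.
\]
Using $F^{ii}=\s_{k-1}(\lambda|i)>0$ on $\Gamma_k$, the frame identities $F^{ij}u_iu_j=\sum_{i\in I_0}F^{ii}\langle\tau_i,E\rangle^2$ and $F^{ij}h_{ik}h_{jk}\ge\sum_{i\in I_0}F^{ii}\lambda_i^2=N^2\mu^2\sum_{i\in I_0}F^{ii}$ (this is where $\lambda_i=-N\mu$ enters), and $\langle\tau_i,E\rangle^2\le\mu^2-1$, the first two terms combine nonnegatively:
\[
F^{ij}h_{ik}h_{jk}-N^2F^{ij}u_iu_j\ \ge\ N^2\sum_{i\in I_0}F^{ii}\big(\mu^2-\langle\tau_i,E\rangle^2\big)\ \ge\ N^2\sum_{i\in I_0}F^{ii}\ \ge\ 0.
\]
Hence $0\ge(kc_2N-C)\mu$, which is impossible once $N>C/(kc_2)$. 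Therefore $\Phi$ is bounded on $\overline{\M_u}$; since $u$ is bounded, so is $\mu$, i.e. $|Du|\le C<1$ on $\bar\Omega_n$ with $C$ depending only on $|D\lu|_{\bar\Omega_n}$ and $\psi$. I expect the real difficulty to lie not in this computation but in the boundary stage: one genuinely needs \emph{both} comparison functions, and the constant $n$ is available precisely because $k$-convexity makes the height function subharmonic on $\M_u$; in the global stage the only subtle point is noticing that the first–order conditions at an interior maximum pin the curvatures to $\lambda_i=-N\mu$ along every direction that is not $\langle\cdot,E\rangle$-orthogonal, which is exactly what lets the good term $\mu F^{ij}h_{ik}h_{jk}$ absorb the bad gradient term $N^2F^{ij}u_iu_j$.
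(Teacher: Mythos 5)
Your proof is correct and follows essentially the same route as the paper's: the same test function $\log(-\langle\nu,E\rangle)+Ku$ on $\M_u$, contracted against $\sigma_k^{ii}$, with $K$ chosen large enough to beat the $C^1$-dependence of $\psi$. The only differences are cosmetic: the paper keeps $V_i=\kappa_i u_i$ unexpanded (Weingarten, not the first-order condition) and absorbs $-\sigma_k^{ii}\kappa_i^2u_i^2/V^2$ into $+\sigma_k^{ii}\kappa_i^2$ immediately via $\sum_i u_i^2=V^2-1<V^2$, which makes your $\lambda_i=-N\mu$ pinning unnecessary, and you write out the boundary-barrier step (two-sided comparison with $\lu$ and the constant $n$), which the paper leaves implicit but which is exactly what the dependence of $C$ on $|D\lu|_{\bar\Omega_n}$ in the statement encodes.
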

\begin{proof}
Let $V=-\lt<\nu, E\rt>=\frac{1}{\sqrt{1-|Du|^2}},$ and consider the test function $\phi=\ln V+Ku,$ where $K>0$ to be determined. If $\phi$ achieves its maximum at an interior point
$P_0\in\M_u,$ then at this point, we may choose a normal coordinate $\{\tau_1, \cdots, \tau_n\}$ such that $h_{ij}=\ka_i\delta_{ij}.$ Since at $P_0$ we have
\[\phi_i=\frac{V_i}{V}+Ku_i=0\]
and \[0\geq\phi_{ii}=\frac{V_{ii}}{V}-\frac{V_i^2}{V^2}+Ku_{ii}.\]
A straightforward calculation yields
\[0\geq-\frac{\lt<\nabla\s_k, E\rt>}{V}-\frac{\s_k^{ii}\ka_i^2u_i^2}{V^2}+Kk\psi V+\s_k^{ii}\ka_i^2.\]
Note that $\lt|\lt<\nabla\s_k, E\rt>\rt|\leq CV^2,$ where $C$ only depends on $|\psi|_{C^1}.$ Choose $K>C+1$ we have
$$-\frac{\lt<\nabla\s_k, E\rt>}{V}-\frac{\s_k^{ii}\ka_i^2u_i^2}{V^2}+Kk\psi V+\s_k^{ii}\ka_i^2>0.$$ This leads to a contradiction.
\end{proof}
\par
\subsubsection{$C^2$ boundary estimates for equation \eqref{Dirichlet curvature}}
Now, we will establish the $C^2$ boundary estimate. For our convenience, we will consider the solvability of the following Dirichlet problem:
\be\label{Dirichlet curvature*}
\left\{
\begin{aligned}
G(Du, D^2u)=F\lt(\frac{1}{w}\ga^{ik}u_{kl}\ga^{lj}\rt)&=\psi(x, u(x))\,\,\text{in $\Omega$},\\
u&=0\,\,\text{on $\p \Omega$,}
\end{aligned}
\right.
\ee
where $\Omega$ is strictly convex.
We will follow the idea of \cite{CNS5}.

\textbf{Infinitesmal stretching.} If $u$ is a solution of \eqref{Dirichlet curvature*}, let $v(x)=\frac{1}{t}u(tx),$ where $t>0.$ Then the principal curvatures
of $\M_v$ satisfies $\ka[\M_v(x)]=t\ka[\M_u(tx)].$  Therefore
\be\label{inf1}
\begin{aligned}
G(Dv, D^2v)&=t^k\psi(tx, u(tx))\\
&=t^k\psi(tx, tv(x)).
\end{aligned}
\ee
We denote $\dot{v}=\frac{d}{dt}v=-\frac{1}{t^2}u(tx)+x\cdot Du(tx),$ when $t=1$
$$\dot{v}=x\cdot Du(x)-u(x).$$
Differentiating equation \eqref{inf1} with respect to $t,$ then evaluate it at $t=1$ we obtain
\begin{align*}
&G^{ij}\p_{ij}\dot{v}+G^s\p_s\dot{v}\\
=&k\psi+\psi_z(v+\dot{v})+x\psi_x.\\
\end{align*}
Denote $L:=G^{ij}\p_{ij}+G^s\p_s,$ we have
\be\label{inf2}
\begin{aligned}
L(x\cdot Du-u)&=k\psi+\psi_{z}(u+x\cdot Du-u)+x\psi_x\\
&=k\psi+x\psi_x+\psi_zx\cdot Du.
\end{aligned}
\ee

\textbf{Infinitesmal rotation in Minkowski space.} Keeping the coordinates $x'=(x_1, \cdots, x_{n-1})$ fixed, we rotate in the
$(x_n, u)$ variables,
$$\begin{bmatrix}
\cosh\theta & \sinh\theta\\
\sinh\theta &\cosh\theta
\end{bmatrix}\begin{bmatrix}x_n\\u\end{bmatrix}=\begin{bmatrix}\cosh\theta x_n+\sinh\theta u\\
\cosh\theta u+\sinh\theta x_n\end{bmatrix}.$$
To the first order in $\theta$ the image of $(x, u(x))$ under such rotation is
\[(x', x_n+u(x)\theta, u(x)+x_n\theta).\]
Therefore, to the first order in $\theta$ the image of
\[(x', x_n-u(x)\theta, u(x', x_n-u(x)\theta))\]
is $(x', x_n, u(x', x_n-u(x)\theta)+x_n\theta).$ Denote this image as a graph function
$$v(x)=u(x', x_n-u(x)\theta)+x_n\theta+\mbox{higher order in $\theta$},$$ then we have
\begin{align*}
G(Dv, D^2v)&=\psi(x', x_n-u(x)\theta, u(x', x_n-u(x)\theta))+\mbox{higher order in $\theta$}\\
&=\psi(x', x_n-u(x)\theta, v(x)-x_n\theta)+\mbox{higher order in $\theta$}.
\end{align*}
Notice that $\left.\frac{dv}{d\theta}\right|_{\theta=0}=x_n-u_nu,$ we obtain
\be\label{inf3}
\begin{aligned}
&G^{ij}\p_{ij}(x_n-u_nu)+G^s\p_s(x_n-u_nu)\\
&=\psi_n(-u(x))+\psi_z(x_n-u_nu-x_n).
\end{aligned}
\ee
Thus, we conclude that
\be\label{inf4}
L(x_n-uu_n)=-u\psi_n-u_nu\psi_z.
\ee

\begin{lemm}
\label{lem c2 boundary}
Let $u$ be a solution of \eqref{Dirichlet curvature*}, then $|D^2u|<C$ on $\p \Omega.$ Here $C$ is a constant depending on $\Omega$ and $\psi.$
\end{lemm}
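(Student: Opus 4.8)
The plan is to follow the classical barrier construction of Caffarelli--Nirenberg--Spruck \cite{CNS5}, adapted to the Minkowski setting, splitting the boundary Hessian bound into the three standard pieces: the pure tangential derivatives, the mixed tangential--normal derivatives, and the pure normal (double-normal) derivative. Fix a boundary point, which after a rotation of the $x'$-coordinates we take to be the origin, with $\Omega$ lying in $\{x_n>0\}$ locally and the inner normal pointing along $e_n$. Since $u=0$ on $\p\Omega$ and $\p\Omega$ is $C^2$ and strictly convex, for a tangential direction $\tau$ one has $u_{\tau\tau}=-u_\nu\,\mathrm{II}(\tau,\tau)$ on $\p\Omega$, where $\mathrm{II}$ is the second fundamental form of $\p\Omega$; combined with the gradient bound from Lemma \ref{lem gradient} this immediately controls the pure tangential second derivatives on $\p\Omega$ in terms of $\Omega$ and $\psi$.

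For the mixed derivatives $u_{\alpha n}$ ($\alpha<n$) at the origin, I would use the tangential operators coming from infinitesimal symmetries of the equation established in the preceding discussion. The key auxiliary functions are $T_\alpha := \p_\alpha + (\text{rotation terms})$ applied to $u$, i.e. the linear combinations $x_\alpha\cdot Du - u$, $x_n - uu_n$ and their $x'$-analogues, whose images under $L=G^{ij}\p_{ij}+G^s\p_s$ were computed in \eqref{inf2} and \eqref{inf4} to be bounded functions (using $\psi\in C^2$, $c_2\le\psi\le c_1$, and the gradient bound). One then builds a barrier of the form
\be\nonumber
\Phi = A\,d(x) + B|x|^2 \pm T_\alpha u
\ee
on $\Omega\cap B_\rho(0)$, where $d$ is the distance to $\p\Omega$ (or the defining function $-\lu$ shifted), $A\gg B\gg 1$ are chosen in that order. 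The strict convexity of $\Omega$ guarantees $L d \le -\epsilon\sum G^{ii} - \epsilon$ after adding $B|x|^2$, so that $L\Phi\le 0$ in $\Omega\cap B_\rho$, while on the boundary pieces $\Phi\ge 0$: on $\p\Omega\cap B_\rho$ because $T_\alpha u$ there is a pure tangential derivative of $u|_{\p\Omega}=0$ plus controlled terms, and on $\Omega\cap\p B_\rho$ by taking $B$ large. The maximum principle then gives $\Phi\ge0$ in $\Omega\cap B_\rho$, and since $\Phi(0)=0$ we get $\p_n\Phi(0)\ge0$, i.e. $|u_{\alpha n}(0)|\le C$.

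The genuinely hard step is the double-normal estimate $u_{nn}(0)\le C$ (the lower bound being automatic from $k$-convexity). Here I would argue as in \cite{CNS5}: it suffices to bound from below the smallest eigenvalue $\kappa_{\min}$ of the restriction of the second fundamental form to the tangent space of $\p\Omega$ at $0$, equivalently to show that $F$ evaluated on the boundary data cannot degenerate. One supposes, for contradiction, that $u_{nn}(0)$ is very large; using the mixed and tangential bounds already obtained, the matrix $a_{ij}=\frac1w\gamma^{ik}u_{kl}\gamma^{lj}$ at $0$ is, up to controlled errors, block-diagonal with a large $(n,n)$-entry, so $F(a_{ij})$ at $0$ is close to $\psi(0)+\text{(a quantity that stays bounded away from the admissible boundary)}\cdot u_{nn}$-type behavior; a careful lower bound for the relevant cofactor $F^{nn}$, combined with a tangential barrier of the type $\Psi = \sum_{\alpha<n} (T_\alpha u)^2 + $ (distance terms) controlling the ``off-diagonal defect,'' forces $F(a_{ij})(0)$ to exceed $c_1\ge\psi$, a contradiction. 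Making the cofactor estimate quantitative — i.e. showing $F^{nn}$ at $0$ is bounded below once the tangential curvatures are pinched by strict convexity of $\Omega$ — is the technical heart; this is where the structure of $F=\sigma_k^{1/k}$ (or $(\sigma_n/\sigma_{n-k})^{1/k}$) and the $k$-convexity hypothesis enter decisively, and I would lean on the corresponding computation in \cite{RWX} (Lemma 27 there) and Bo Guan's argument \cite{Guan} cited just above, transplanted through the Legendre/Gauss-map correspondence recorded in Section \ref{pre}.
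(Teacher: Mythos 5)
Your overall framework is the right one (Caffarelli--Nirenberg--Spruck: split into tangential, mixed tangential--normal, and double-normal pieces; use tangential operators $T_\alpha$ with $|LT_\alpha u|\leq C$; barrier plus maximum principle for the mixed derivative; a separate argument for $u_{nn}$), and your tangential step is fine. But there is a real gap in the barrier you propose for the mixed derivatives.

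\textbf{The mixed-derivative barrier.} You write $\Phi=Ad(x)+B|x|^2\pm T_\alpha u$ and claim ``strict convexity of $\Omega$ guarantees $Ld\le-\epsilon\sum G^{ii}-\epsilon$ after adding $B|x|^2$.'' This does not hold. First, $L(B|x|^2)=2B\sum G^{ii}+2BG^sx_s$ has the \emph{wrong sign}: the dominant term $2B\sum G^{ii}$ is positive, so adding $B|x|^2$ pushes $L\Phi$ up, not down. Second, $Ld\leq 0$ only captures the second-order part $G^{ij}d_{ij}$ (which is $\leq 0$ since $d$ is concave and $G^{ij}\geq 0$); the first-order part $G^sd_s$ has no sign, and for the operator $G(Du,D^2u)=F(w^{-1}\gamma^{ik}u_{kl}\gamma^{lj})$ the coefficients $G^s$ involve the Hessian of $u$ and need not be bounded a priori. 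So you cannot conclude $L(Ad+B|x|^2)\leq -C$ and the maximum-principle comparison against $\pm T_\alpha u$ breaks down. What the paper does instead --- and this is exactly the point of the infinitesimal-stretching and Minkowski-rotation computations \eqref{inf2}, \eqref{inf4} --- is to take as the barrier
\[
h=(x\cdot Du-u)-\frac{\delta}{\beta}\,(x_n-uu_n),
\]
for which one reads off directly $Lh=k\psi+x\psi_x+\psi_zx\cdot Du-\frac{\delta}{\beta}(-u\psi_n-u_nu\psi_z)\geq\frac{k}{2}\psi>0$ on $\Omega_\beta$ for $\beta,\delta$ small. The crucial term $k\psi$ is produced by the degree-$k$ homogeneity of $\sigma_k$ (via the stretching invariance), which is precisely the structural input that lets one avoid any lower bound on $\sum G^{ii}$ or any control of $G^s$. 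Your functions $x\cdot Du-u$ and $x_n-uu_n$ should be the barrier itself, not folded into $T_\alpha$.

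\textbf{The double-normal estimate.} The paper concludes $|u_{nn}|\leq C$ on $\p\Omega$ simply by invoking Lemma 1.2 of \cite{CNS3}, once the tangential and mixed bounds are in hand. Your proposal to run a CNS5-style contradiction argument via a lower bound on the cofactor $F^{nn}$ is a known alternative route and is not wrong in spirit, but it is considerably more involved and, as you note, requires a quantitative degeneracy estimate that you leave to references. The paper's route is shorter and cleaner; if you want to retain your approach you would need to actually carry out the $F^{nn}$ lower bound rather than gesture at it. In summary: fix the mixed-derivative barrier to use $h$ as above (the rest of that step then goes through as you describe), and either cite CNS3 Lemma 1.2 for $u_{nn}$ or supply the details of the cofactor estimate.
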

\begin{proof}
For any $p\in\p\Omega,$ we suppose $p$ is the origin and that the $x_n-$ axis is the interior normal of $\p\Omega$ at $p.$
We may also assume the boundary near the origin $p$ is represented by
\[x_n=\frac{1}{2}\sum\limits_{\alpha=1}^{n-1}\lambda_\alpha x_{\alpha}^2+O(|x'|^3),\,\,x'=(x_1, \cdots, x_{n-1}),\]
where $\lambda_\alpha>0,\,\,1\leq\alpha\leq n-1$ are the principal curvatures of $\p\Omega$ at the origin.
Let $T_\alpha=\p_\alpha+\lambda_\alpha(x_\alpha\p_n-x_n\p_\alpha).$
Note that $G^{ij}u_{ij\alpha}+G^su_{s\alpha}=\psi_\alpha+\psi_zu_\alpha.$
In view of the fact that \eqref{Dirichlet curvature} is invariant under rotation ( see equation (3.1) in \cite{CNS5}), we get
\be\label{inf5}
|LT_\alpha u|\leq C.
\ee
Moreover, it's easy to see we have $|T_\alpha u|\leq C|x'|^2$ on $\p\Omega$ near the origin.
In the following, we denote $\Omega_\beta:=\Omega\cap\{x_n<\beta\}.$ Set
\[h=(x\cdot Du-u)-\frac{\delta}{\beta}(x_n-uu_n).\]
On $\p\Omega\cap\p\Omega_\beta,$ note that $u=0,$ we have $x\cdot Du\leq C_1|x'|^2.$ This implies
on $\p\Omega\cap\p\Omega_\beta,$
\be\label{inf6}
h=x\cdot Du-\frac{\delta}{\beta}x_n\leq\lt(C_1-\frac{\delta}{\beta}a\rt)|x'|^2,
\ee
where $a>0$ depends on the principal curvatures of $\p\Omega.$
Notice that $u$ is a spacelike function, we suppose $|Du|\leq\theta_0$ in $\bar{\Omega}$ for some
$\theta_0\in (0, 1).$ Then we have $0\leq-u\leq\theta_0\beta$ in $\Omega_\beta.$ Therefore, on $\{x_n=\beta\}$
we obtain
\be\label{inf7}
\begin{aligned}
h&=\beta u_n+\sum\limits_{\alpha=1}^{n-1}x_\alpha u_\alpha-u+\frac{\delta}{\beta}uu_n-\delta\\
&\leq\beta\theta_0+C\beta^{1/2}+\theta_0\beta+\theta_0^2\delta-\delta\\
&\leq C\beta^{1/2}+\delta(\theta_0-1)
\end{aligned}
\ee
with $C$ being independent of $\beta$ and $\delta.$
Moreover,
\be\label{inf8}
\begin{aligned}
Lh&=k\psi+x\psi_x+\psi_zx\cdot Du-\frac{\delta}{\beta}(-u\psi_n-u_nu\psi_z)\\
&\geq k\psi-C\beta^{1/2}-C\delta\\
&\geq\frac{k}{2}\psi,
\end{aligned}
\ee
where $\delta$ and $\beta$ are small positive constants.

Now choose $A=A(\delta)>0$ large such that
\[Ah\leq-|T_\alpha u|\,\,\mbox{on $\p\Omega_\beta,$}\]
and $LAh>|LT_{\alpha}u|$ in $\Omega_\beta.$
By the maximum principle we conclude that
\[Ah\pm T_\alpha u\leq 0\,\,\mbox{in $\bar{\Omega}_\beta.$}\]
On the other hand we have $h(0)=T_\alpha u(0)=0.$ Therefore,
\[|\p_nT_\alpha u(0)|\leq -Ah_n(0)\leq\frac{A\delta}{\beta},\]
which yields
\[|u_{n\alpha}(0)|\leq C.\]
Since $p\in\p\Omega$ is arbitrary, we get
\[|u_{\alpha n}(x)|\leq C\,\,\mbox{for any $x\in\p\Omega.$}\]
Applying Lemma 1.2 in \cite{CNS3} we obtain
\[|u_{nn}(x)|\leq C\,\,\mbox{for any $x\in\p\Omega.$}\]
This completes the proof of this Lemma.
\end{proof}

\par
\subsubsection{$C^2$ global estimate for equation \eqref{Dirichlet curvature}}
Finally, we will prove the $C^2$ global estimate. In this subsubsection, for the greater generality, we will assume
$\psi=\psi(X, \nu).$
\begin{lemm}
\label{lem c2 global}
Let $u$ be a solution of \eqref{Dirichlet curvature*} with $\psi=\psi(X, \nu)$, then 
$$|D^2u|<\max\{C, \max\limits_{\p \Omega}|D^2u|\}$$ on $\Omega.$ Here $C$
is a constant depending on $|Du|_{\Omega}$ and $\psi.$
\end{lemm}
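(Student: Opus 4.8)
The plan is to run the standard maximum-principle argument on the largest principal curvature, but carried out in the Minkowski/graph setting where the spacelike condition gives us a privileged positive quantity to exploit. Concretely, I would consider the test function
\[
W(X,\xi)=\log h_{\xi\xi}+\phi(|Du|^2)+A\langle X, E\rangle,
\]
where $h_{\xi\xi}$ is the second fundamental form in a unit tangent direction $\xi$, the auxiliary function $\phi$ of $|Du|^2=1-w^2$ is chosen (as is classical, e.g. a decreasing function like $\phi(s)=-\tfrac12\log(1-s)=\log\frac{1}{w}$, i.e. the term $\log V$ from Lemma \ref{lem gradient}) so as to control the ``bad'' gradient terms, and $A>0$ is a large constant to be fixed. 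Working at an interior maximum point $P_0$ of $W$ over $\overline\Omega$, and assuming (by contradiction) that the maximal curvature there is large, I would choose a normal frame $\{\tau_i\}$ diagonalizing $h_{ij}=\ka_i\delta_{ij}$ with $\ka_1=\ka_{\max}$, and compute $0\geq G^{ij}W_{ij}$ — here $G^{ij}=F^{ij}$ with respect to the second fundamental form, so that $\sum G^{ij}h_{ij}=k\psi$ and, by homogeneity/concavity of $F=\sigma_k^{1/k}$ on $\Gamma_k$, $\sum F^{ii}\ge F(1,\dots,1)=\binom nk^{1/k}>0$.

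The key steps, in order: (1) differentiate the equation $F(h_{ij})=\psi(X,\nu)$ twice in the direction $\tau_1$, using the Gauss, Weingarten, Codazzi relations \eqref{Gauss} and the Ricci identity \eqref{a1.2} to commute derivatives, which produces $F^{ij}h_{ij11}=-F^{ij,kl}h_{ij1}h_{kl1}+\tilde\psi_{11}+(\text{curvature terms involving }h_{11}\sum F^{ii}h_{ii}^2, \text{ etc.})$; the term $-F^{ij,kl}h_{ij1}h_{kl1}\geq 0$ by concavity of $F$ and may be dropped. (2) Expand $\tilde\psi_{11}=\partial^2_{\tau_1\tau_1}\big(\psi(X,\nu)\big)$ via the chain rule: using $X_{11}=h_{11}\nu$ and $\nu_1=h_{1j}\tau_j$, the worst term is $d_\nu\psi$ applied to $\nabla_{\tau_1}\nu_1$, which is $O(h_{11}^2)$; this is a \emph{good} sign only if we feed it through the $\log h_{11}$ (it appears divided by $h_{11}$, giving $O(h_{11})$) — so it must be absorbed either by the $A\langle X,E\rangle$ term (recall $\langle X,E\rangle_{ij}=-h_{ij}\langle\nu,E\rangle$ up to lower order, and $\langle\nu,E\rangle=-V<0$, so $A\langle X,E\rangle$ contributes $+AV\sum F^{ii}h_{ii}=AVk\psi$ which does not itself beat $O(h_{11})$) — the mechanism that actually wins is the standard one: the second-derivative terms $F^{ii}h_{11i}^2/h_{11}^2$ together with the first-order condition $h_{11i}/h_{11}=-\phi'(\dots)(|Du|^2)_i-A\langle X,E\rangle_i$ and the concavity inequality
\[
\sum_{i\ne 1}\frac{F^{ii}-F^{11}}{\ka_1-\ka_i}h_{11i}^2\ \geq\ 0
\]
(valid since $F$ is concave and symmetric, hence $F^{ii}\le F^{11}$ for $\ka_i\ge\ka_1$, etc.), exactly as in the proof of Lemma \ref{lem-c2-global}, allow one to conclude $F^{ii}h_{11i}^2/h_{11}^2 \le (1+\epsilon)\sum F^{ii}(\phi')^2(|Du|^2)_i^2 + C A^2$. (3) Choose $\phi$ so that $\phi''$ beats the bad terms (this is where the spacelike bound $|Du|\le\theta_0<1$ and $\sum F^{ii}\ge c>0$ enter), then choose $A$ large; this forces $0\ge G^{ij}W_{ij}\ge c_0\,h_{11}\sum F^{ii}-C$, a contradiction once $h_{11}$ is large. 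Since $W$ then attains its max on $\p\Omega$, and $\phi+A\langle X,E\rangle$ is bounded on $\overline\Omega$, we get $h_{\xi\xi}\le\max\{C,\max_{\p\Omega}h_{\xi\xi}\}$, and converting from $h_{ij}$ back to $D^2u$ via $h_{ij}=u_{ij}/w$ with $w$ bounded below gives the stated bound on $|D^2u|$.

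The main obstacle I expect is step (2): controlling the dependence of $\psi=\psi(X,\nu)$ on the normal $\nu$. Unlike the purely $X$-dependent case, $\nu$ itself varies by $\nu_i=h_{ij}\tau_j$, so $\partial^2_{\tau_1\tau_1}\psi$ genuinely contains a term of order $h_{11}^2$ with \emph{a priori uncontrolled sign} coming from $d_\nu\psi(\nabla_{\tau_1}\nu_1)$ and $d^2_\nu\psi(\nu_1,\nu_1)$; the whole argument hinges on showing that, after dividing by $h_{11}$ and combining with the first-order relations, these contribute only $O(h_{11})+O(A^2)$ rather than something that overwhelms the $c_0 h_{11}\sum F^{ii}$ main term. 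This is handled by the now-standard trick (going back to Guan--Ren--Wang, cf. \cite{GRW} and the hyperbolic computation in Lemma \ref{lem-c2-global}) of keeping the full concavity term $\sum_{i\ne1}\frac{F^{ii}-F^{11}}{\ka_1-\ka_i}h_{11i}^2$ rather than discarding it, which exactly cancels the dangerous $F^{11}h_{111}^2/h_{11}^2$ contribution and leaves a manageable remainder; the remaining $\nu$-derivative terms of $\psi$ are then genuinely lower order in $h_{11}$ and are swallowed by the constant $C$ and by choosing $A$ and the coefficients in $\phi$ appropriately. A secondary, more bookkeeping-level difficulty is that $F=\sigma_k^{1/k}$ is only concave on $\Gamma_k$ (not $\Gamma_n$), so one must be careful that $\lambda[h_{ij}]\in\Gamma_k$ is preserved and that $\sum F^{ii}$ is bounded below — both of which follow from $k$-convexity of $\M_u$ and the homogeneity of $F$, as in \eqref{3.21}.
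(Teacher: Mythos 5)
There is a genuine gap, and it is structural, not cosmetic.

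First, your choice of cutoff $A\langle X,E\rangle$ does not produce the term that actually wins. The paper uses $-N\langle\nu,E\rangle$: by the Weingarten formula, differentiating this twice yields $Nh_{ii}^2(-\langle\nu,E\rangle)+N\sum_s u_sh_{iis}$, and contracting with $\sigma_k^{ii}$ gives $N\sigma_k^{ii}\kappa_i^2(-\langle\nu,E\rangle)$, a quantity quadratic in the principal curvatures and of the favorable sign — this is the term that eventually dominates the $-C\kappa_1$ residual in \eqref{newdp51}. Your $A\langle X,E\rangle$ contributes (as you yourself note) only $O(\psi)$, i.e.\ a bounded term, after contraction. The Pogorelov gradient factor $\phi(|Du|^2)$ and its $\phi''$ term likewise cannot manufacture an $O(\kappa_1^2\sum F^{ii})$ term at an interior critical point, so your concluding inequality $0\geq c_0 h_{11}\sum F^{ii}-C$ is not produced by the test function you wrote down.

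Second, and more fundamentally, the concavity argument you invoke — using $\sum_{i\neq1}\frac{F^{ii}-F^{11}}{\kappa_1-\kappa_i}h_{11i}^2\geq0$ (with $F=\sigma_k^{1/k}$ applied directly to the second fundamental form) to control $F^{ii}h_{11i}^2/h_{11}^2$ — is precisely what is \emph{not} available for $k$-convex solutions when $3\leq k\leq n-3$; this is why the lemma sits in the subsection restricted to $k=n-1,\,n-2$, and why the paper's proof is built on Claim~\ref{claim 1} and Claim~\ref{claim 2}, which import the technology of \cite{GRW,RW,RW1} and the $\log\log P$ test function with $P=\sum_l e^{\kappa_l}$. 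Your analogy to Lemma~\ref{lem-c2-global} is misleading: there the operator is $\hat F=(\sigma_n/\sigma_{n-k})^{1/k}$ applied to the \emph{curvature radii} in the hyperbolic/Gauss-map picture (available because the hypersurface is convex), and the decisive estimate \eqref{3.14}, $\frac{\hat F^{ii}-\hat F^{11}}{\lambda_1-\lambda_i}-\frac{\hat F^{ii}}{\lambda_1}>0$, is a special feature of that inverse-concave structure. That inequality does not carry over to $\sigma_k^{1/k}$ of the principal curvatures for a merely $k$-convex hypersurface, so your step (2)/(3) cannot close. To prove the lemma as stated you genuinely need the Ren--Wang decomposition into $A_i,B_i,C_i,D_i,E_i$ and the two claims for the restricted range of $k$.

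Finally, a smaller point: the $-N\langle\nu,E\rangle$ term already absorbs the $O(\kappa_1)$ contribution from the $\nu$-dependence of $\psi$, which you flagged as the main obstacle; the mechanism you propose (keeping the concavity remainder) is neither what the paper does nor adequate here, for the reasons above.
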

\begin{proof}
we consider the following test function whose form first appeared in
\cite{GRW},
$$
\phi=\log\log P-N\lt<\nu, E\rt>.
$$
Here, the function $P$ is defined by $P=\dsum_le^{\kappa_l}$ and $N$ is a sufficiently large constant to be determined later.

We may assume that the maximum of $\phi$ is achieved  at some point
$P_0\in \M_u$, where $u$ is the solution of \eqref{Dirichlet curvature*}. Suppose $\{\tau_1,\tau_2,\cdots,\tau_n\}$ is a normal coordinate near $P_0$
such that at $P_0,$ $h_{ij}=\la_i\delta_{ij}$ and $\la_1\geq\la_2\geq\cdots\geq\la_n.$
\par
Differentiating the function $\phi$ twice at $P_0$, we have
\begin{equation}\label{dp2.11}
\phi_i =\dfrac{P_i}{P\log P}+Nh_{ii}u_i = 0,
\end{equation}
and
\begin{eqnarray*}
&&\phi_{ii}\\
&=& \frac{P_{ii}}{P\log P}-\frac{P_i^2}{P^2\log P}-\frac{P_i^2}{(P\log P)^2}-Nh_{ii}^2\lt<\nu, E\rt>+\sum_sNu_sh_{isi}\nonumber\\
&=&\frac{1}{P\log
P}\bigg[\sum_le^{\kappa_l}h_{llii}+\sum_le^{\kappa_l}h_{lli}^2+\sum_{p\neq
q}\frac{e^{\kappa_{p}}
-e^{\kappa_{q}}}{\kappa_{p}-\kappa_{q}}h_{pq i}^2-\Big(\frac{1}{P}+\frac{1}{P\log P}\Big)P_i^2\bigg]\nonumber\\
&&-Nh_{ii}^2\lt<\nu, E\rt>+\sum_sNu_sh_{iis}\nonumber
\end{eqnarray*}
Contracting with $\sigma_{k}^{ii}$, we get
\begin{eqnarray}\label{dp2.13}
&&\sigma_{k}^{ii}\phi_{ii}\\
&=&\frac{\sigma_{k}^{ii}}{P\log
P}\bigg[\sum_le^{\kappa_l}h_{llii}+\sum_le^{\kappa_l}h_{lli}^2+\sum_{p\neq
q}\frac{e^{\kappa_{p}}
-e^{\kappa_{q}}}{\kappa_{p}-\kappa_{q}}h_{pq i}^2-\Big(\frac{1}{P}+\frac{1}{P\log P}\Big)P_i^2\bigg]\nonumber\\
&&-N\sigma_{k}^{ii}\kappa_{i}^2\lt<\nu, E\rt>+\sum_sNu_s\sigma_{k}^{ii}h_{iis}.\nonumber
\end{eqnarray}\par
At $P_0$, differentiating the equation (\ref{curvature}) twice yields,
\begin{align}\label{dp2.14}
\sigma_{k}^{ii}h_{iil}=d_X\psi(\tau_l)+\kappa_ld_{\nu}\psi(\tau_l),
\end{align}
and
\begin{align}\label{dp2.15}
\sigma_{k}^{ii}h_{iill}+\sigma_{k}^{pq,rs}h_{pql}h_{rsl}\geq
-C-Ch_{11}^2+\sum_sh_{sll}d_{\nu}\psi(\tau_s),
\end{align}
where $C$ is some uniform constant only depending on $\psi$. Note that
\begin{eqnarray}\label{newdp216}
h_{llii}=h_{iill}-h_{ii}h_{ll}^2+h^2_{ii}h_{ll}.
\end{eqnarray}
Inserting  (\ref{dp2.15}) and \eqref{newdp216} into (\ref{dp2.13}), we obtain
\begin{eqnarray}
&&\sigma_{k}^{ii}\phi_{ii}\label{dp2.16}\\
&\geq &\frac{1}{P\log P}\bigg[\sum_le^{\kappa_l}\Big(-C-C\kappa_{1}^2-\sigma_{k}^{pq,rs}h_{pql}h_{rsl}+\sum_sh_{sll}d_{\nu}\psi(\tau_s)\Big)\nonumber\\
&&+\sum_l\sigma_{k}^{ii}e^{\kappa_l}h_{lli}^2+\s_k^{ii}\sum_{p\neq
q}\frac{e^{\kappa_{p}}
-e^{\kappa_{q}}}{\kappa_{p}-\kappa_{q}}h_{pq i}^2-\Big(\frac{1}{P}+\frac{1}{P\log P}\Big)\sigma_{k}^{ii}P_i^2\bigg]\nonumber\\
&&-N\sigma_{k}^{ii}\kappa_{i}^2\lt<\nu, E\rt>+\sum_sNu_s\sigma_{k}^{ii}h_{sii}-\sigma_k^{ii}\kappa_i^2.\nonumber
\end{eqnarray}
By (\ref{dp2.11}) and (\ref{dp2.14}), we have
\begin{eqnarray}\label{dp2.17}
\frac{1}{P\log P}\sum_s\sum_le^{\kappa_l}h_{sll}d_{\nu}\psi(\tau_s)+\sum_sNu_s\sigma_{k}^{ii}h_{sii}\geq-C.\nonumber
\end{eqnarray}

Now, for any constant $K>1$, we denote
\begin{eqnarray}
&&A_i=e^{\kappa_i}\Big[K(\sigma_{k})_i^2-\sum_{p\neq
q}\sigma_{k}^{pp,qq}h_{ppi}h_{qqi}\Big], \ \  B_i=2\sum_{l\neq
i}\sigma_{k}^{ii,ll}e^{\kappa_l}h_{lli}^2, \nonumber \\
&&C_i=\sigma_{k}^{ii}\sum_le^{\kappa_l}h_{lli}^2, \  \
D_i=2\sum_{l\neq
i}\sigma_{k}^{ll}\frac{e^{\kappa_l}-e^{\kappa_i}}{\kappa_l-\kappa_i}h_{lli}^2,
\ \ E_i=\frac{1+\log P}{P\log P}\sigma_{k}^{ii}P_i^2\nonumber.
\end{eqnarray}
Combinning
$$
-\sum_l\sigma_{k}^{pq,rs}h_{pql}h_{rsl}=\sum_{p\neq
q}\sigma_{k}^{pp,qq}h_{pql}^2-\sum_{p\neq
q}\sigma_{k}^{pp,qq}h_{ppl}h_{qql},
$$
with (\ref{dp2.16}), we get
\begin{eqnarray}
&&\sigma_{k}^{ii}\phi_{ii}\label{dp2.18}\\
&\geq &\frac{1}{P\log P}\dsum_i(A_i+B_i+C_i+D_i-E_i)\nonumber\\
&&+(-N\lt<\nu, E\rt>-1)\sigma_{k}^{ii}\kappa_{i}^2-C\kappa_{1}.\nonumber
\end{eqnarray}
\par

\begin{claim}\label{claim 1}
For any given $ 0<\varepsilon<\frac{1}{2}$, we let
$\alpha=\frac{1-2\varepsilon}{1+\varepsilon}$. There exists a
positive constant $\delta<\dfrac{1}{2}$ such that, for any
$|\kappa_i|\leq \delta \kappa_1, 1\leq i\leq n$, if the constant $K$ and the maximum principal curvature $\kappa_1$ both are
sufficiently large, we have
\begin{align*}
A_i+B_i+C_i+D_i-E_i-\frac{\alpha}{P\log
P}\sigma_{k}^{ii}P_i^2\geq0.
\end{align*}
\end{claim}
Applying Lemma 6 in \cite{RW}, we can see that when $K$ is chosen to be sufficiently large, then $A_i\geq 0.$
By the Cauchy-Schwarz inequality, we have
\begin{eqnarray}\label{dp2.19}
P_i^2&= & e^{2\kappa_i}h_{iii}^2 +2 \sum_{l\neq
i}e^{\kappa_i+\kappa_l}h_{iii}h_{lli}
+ \Big(\sum_{l\neq i}e^{\kappa_l}h_{lli}\Big)^2\\
&\leq & e^{2\kappa_i}h_{iii}^2 +2 \sum_{l\neq
i}e^{\kappa_i+\kappa_l}h_{iii}h_{lli}+
(P-e^{\kappa_i})\sum_{l\neq i}e^{\kappa_l}h_{lli}^2. \nonumber
\end{eqnarray}
Thus,
\begin{eqnarray}\label{dp2.20}
&&B_i+C_i+D_i-E_i-\frac{\alpha}{P\log
P}\sigma_{k}^{ii}P_i^2\\
&\geq&2\sum_{l\neq i}e^{\kappa_l}\sigma_{k}^{ll,ii}h_{lli}^2 +
2\sum_{l\neq
i}\frac{e^{\kappa_l}-e^{\kappa_i}}{\kappa_l-\kappa_i}\sigma_{k}^{ll}h_{lli}^2
-\frac{1+\alpha}{\log P}\sum_{l\neq
i}e^{\kappa_l}\sigma_{k}^{ii}h_{lli}^2\nonumber\\
&&+\frac{1+\alpha+\log P}{P\log P}\sum_{l\neq
i}e^{\kappa_l+\kappa_i}\sigma_{k}^{ii}h_{lli}^2+e^{\kappa_i}\sigma_{k}^{ii}h_{iii}^2 \nonumber\\
&&-\frac{1+\alpha+\log P}{P\log
P}e^{2\kappa_i}\sigma_{k}^{ii}h_{iii}^2 -2\frac{1+\alpha+\log
P}{P\log P}\sum_{l\neq
i}e^{\kappa_i+\kappa_l}\sigma_{k}^{ii}h_{iii}h_{lli}  \nonumber.
\end{eqnarray}
Let $\varepsilon$ be equal to the $\varepsilon_T$ in Lemma 12 of \cite{RW}. Then we know there
exists a positive constant $\delta<\varepsilon$ such that, when $|\ka_i|<\delta\ka_1$
\begin{align}\label{dp2.21}
(2-\varepsilon)\sum_{l\neq
i}e^{\kappa_l}\sigma_{k}^{ll,ii}h_{lli}^2 +
(2-\varepsilon)\sum_{l\neq
i}\frac{e^{\kappa_l}-e^{\kappa_i}}{\kappa_l-\kappa_i}\sigma_{k}^{ll}h_{lli}^2
-\dfrac{1+\alpha}{\log P}\sum_{l\neq
i}e^{\kappa_l}\sigma_{k}^{ii}h_{lli}^2\geq 0.
\end{align}
On the other hand, we have
\begin{align}\label{CauS}
&\sum_{l\neq i,1}e^{\kappa_l+\kappa_i}\sigma_{k}^{ii}h_{lli}^2
-2\sum_{l\neq
i,1}e^{\kappa_i+\kappa_l}\sigma_{k}^{ii}h_{iii}h_{lli}\geq-\sum_{l\neq
i,1}e^{\kappa_l+\kappa_i}\sigma_{k}^{ii}h_{iii}^2.
\end{align}
It follows
\begin{eqnarray}\label{dp2.23}
&&B_i+C_i+D_i-E_i-\frac{\alpha}{P\log
P}\sigma_{k}^{ii}P_i^2\\
&\geq&\frac{1+\alpha+\log P}{P\log
P}e^{\kappa_1+\kappa_i}\sigma_{k}^{ii}h_{11i}^2+e^{\kappa_i}\sigma_{k}^{ii}h_{iii}^2 \nonumber\\
&&-\frac{1+\alpha+\log P}{P\log P}\sum_{l\neq
1}e^{\kappa_l+\kappa_i}\sigma_{k}^{ii}h_{iii}^2
-2\frac{1+\alpha+\log P}{P\log P}e^{\kappa_i+\kappa_1}\sigma_{k}^{ii}h_{iii}h_{11i} \nonumber\\
&&+\varepsilon e^{\kappa_1}\sigma_{k}^{11,ii}h_{11i}^2 +
\varepsilon\frac{e^{\kappa_1}-e^{\kappa_i}}{\kappa_1-\kappa_i}\sigma_{k}^{11}h_{11i}^2.
\nonumber
\end{eqnarray}

A straightforward calculation shows that when $\ka_1$ is very large the following inequalities hold:
\begin{eqnarray}
e^{\kappa_i}\sigma_{k}^{ii}h_{iii}^2 -\frac{1+\alpha+\log
P}{P\log P}\sum_{l\neq
1}e^{\kappa_l+\kappa_i}\sigma_{k}^{ii}h_{iii}^2&\geq&
\Big(\frac{e^{\kappa_1}}{P}-\frac{1+\alpha}{\log
P}\Big)e^{\kappa_i}\sigma_{k}^{ii}h_{iii}^2 \nonumber\\
&\geq& \frac{1}{n+1}e^{\kappa_i}\sigma_{k}^{ii}h_{iii}^2,\nonumber
\end{eqnarray}
and
\begin{eqnarray}
-2\frac{1+\alpha+\log P}{P\log
P}e^{\kappa_i+\kappa_1}\sigma_{k}^{ii}|h_{iii}h_{11i}| &\geq &
-\frac{3}{P}e^{\kappa_i+\kappa_1}\sigma_{k}^{ii}|h_{iii}h_{11i}|\nonumber\\
&\geq& -3e^{\kappa_i}\sigma_{k}^{ii}|h_{iii}h_{11i}|.\nonumber
\end{eqnarray}
Moreover, it is easy to see that
\begin{align}\label{dp2.24}
e^{\kappa_1}\sigma_{k}^{11,ii}h_{11i}^2 +
\frac{e^{\kappa_1}-e^{\kappa_i}}{\kappa_1-\kappa_i}\sigma_{k}^{11}h_{11i}^2
=e^{\kappa_i}\sigma_{k}^{11,ii}h_{11i}^2 +
\frac{e^{\kappa_1}-e^{\kappa_i}}{\kappa_1-\kappa_i}\sigma_{k}^{ii}h_{11i}^2.
\end{align}
By the Taylor expansion, we have
\begin{align}\label{e2.25}
\frac{e^{\kappa_1}-e^{\kappa_i}}{\kappa_1-\kappa_i}\sigma_{k}^{ii}h_{11i}^2
=e^{\kappa_i}\sum_{m\geq
1}\frac{(\kappa_1-\kappa_i)^{m-1}}{m!}\sigma_{k}^{ii}h_{11i}^2.
\end{align}
Combining the previous four formulae with (\ref{dp2.23}), we obtain
when $\kappa_1$ is sufficiently large and $|\ka_i|<\delta\ka_1,$
\begin{eqnarray}
&&B_i+C_i+D_i-E_i-\frac{\alpha}{P\log
P}\sigma_{k}^{ii}P_i^2\nonumber\\
&\geq&e^{\kappa_i}\sigma_{k}^{ii}\Big[\frac{1}{n+1}h_{iii}^2-3|h_{iii}h_{11i}|
+\varepsilon\sum_{m\geq
1}\frac{(\kappa_1-\kappa_i)^{m-1}}{m!}h_{11i}^2\Big] \nonumber\\
&\geq& 0.\nonumber
\end{eqnarray}
Therefore, Claim 1 is proved.

Now, recall Section 4 of \cite{RW} and the proof of Theorem 14 in \cite{RW1}, we know the following claim is true.
\begin{claim}\label{claim 2}
Suppose $k=n-1$ ($n\geq 3$) and $k=n-2$ ($n\geq 5$). For any index  $1\leq i\leq n$,
if the positive constant $K$ and the maximum principal curvature $\kappa_1$
both are sufficiently large,
we have
\begin{align*}
A_i+B_i+C_i+D_i-E_i\geq 0.
\end{align*}
\end{claim}
By Claim \ref{claim 1} and Claim \ref{claim 2}, \eqref{dp2.18} becomes
\be\label{newdp51}
0\geq \sum_{|\kappa_i|<\delta \kappa_1}\frac{\alpha}{(P\log P)^2}\sigma_{k}^{ii}P_i^2+(-N\lt<\nu, E\rt>-1)\sigma_{k}^{ii}\kappa_{i}^2-C\kappa_{1}.
\ee
Here, the constant $\delta$ is the constant chosen in Claim 1.
Choose $N>0$ such that $\sigma_{k}^{11}\kappa_{1}^2(-N\lt<\nu, E\rt>-1)-C\kappa_{1}> 0,$ we get a contradiction.
Therefore, our desired estimate follows immediately.
\end{proof}
By Lemma \ref{lem gradient}, Lemma \ref{lem c2 boundary}, and Lemma \ref{lem c2 global}, we conclude that when $k=n-1, n-2,$ the Dirichlet problem \eqref{Dirichlet curvature}
admits a $k$-convex solution.

\bigskip
\section{The local estimates}
\label{local C1}
We will devote this section to establishing the local $C^1$ and $C^2$ estimates for the solution $u$ of \eqref{curvature1}.
\subsection{Local $C^1$ estimates}
In this subsection, we will prove the local $C^1$ estimate. We will split it into two cases. In the first case, we will assume $u$ is a
convex solution of \eqref{curvature}; in the second case, we will assume  $u$ is a $k$-convex solution of \eqref{curvature-k}.
Note that in both cases  our results hold for $1\leq k\leq n.$

For strictly convex, spacelike hypersurfaces, Bayard-Schn\"urer \cite{BS} proved the following local gradient estimate lemma.
\begin{lemm}
\label{lc1lem1}(Lemma 5.1 in \cite{BS})
Let $\Omega\subset \R^n$ be a bounded open set. Let $u, \bar{u}, \Psi:\Omega\rightarrow\mathbb{R}^n$ be strictly spacelike. Assume that $u$ is strictly convex and $u<\uu$ in $\Omega.$ Also assume that near $\partial\Omega,$ we have $\Psi>\bar{u}.$
Consider the set, where $u>\Psi.$ For every $x$ in this set, we have the following gradient estimate for $u:$
\[\frac{1}{\sqrt{1-|Du|^2}}\leq\frac{1}{u(x)-\Psi(x)}\cdot\sup\limits_{\{u>\Psi\}}\frac{\bar{u}-\Psi}{\sqrt{1-|D\Psi|^2}}.\]
\end{lemm}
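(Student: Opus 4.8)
The plan is to prove this estimate by an elementary tangent–plane comparison combined with a one–line maximum argument on an affine function; no PDE information enters, only the convexity of $u$ and the spacelike conditions $|Du|<1$, $|D\Psi|<1$. Since the statement is local and is applied on balls in what follows, I would assume $\Omega$ is convex, so that at any point $x_0\in\Omega$ the tangent hyperplane $\ell(x):=u(x_0)+Du(x_0)\cdot(x-x_0)$ of the graph of $u$ satisfies $\ell\le u$ throughout $\Omega$.

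First I would fix $x_0\in\Omega$ with $u(x_0)>\Psi(x_0)$ and introduce the open set $A:=\{x\in\Omega:\ell(x)>\Psi(x)\}$. It contains $x_0$, and since near $\partial\Omega$ one has $\ell\le u\le\bar u<\Psi$ (using $\ell\le u$ from convexity, $u<\bar u$ in $\Omega$, and the boundary hypothesis $\Psi>\bar u$), the closure $\bar A$ is a compact subset of $\Omega$ on whose boundary $\ell=\Psi$. Next I would look at $g:=\ell-\Psi$ on $\bar A$: since $g(x_0)=u(x_0)-\Psi(x_0)>0$ while $g\equiv 0$ on $\partial A$, the maximum of $g$ over $\bar A$ is attained at an interior point $x_1\in A$. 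At $x_1$ we get $Dg(x_1)=0$, hence $D\Psi(x_1)=D\ell=Du(x_0)$ and in particular $|D\Psi(x_1)|=|Du(x_0)|$. Moreover $u(x_1)\ge\ell(x_1)>\Psi(x_1)$, so $x_1\in\{u>\Psi\}$, and $g(x_1)=\ell(x_1)-\Psi(x_1)\le\bar u(x_1)-\Psi(x_1)$ because $\ell\le u\le\bar u$.

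Chaining these facts yields $u(x_0)-\Psi(x_0)=g(x_0)\le g(x_1)\le\bar u(x_1)-\Psi(x_1)$, so that
\[
\frac{u(x_0)-\Psi(x_0)}{\sqrt{1-|Du(x_0)|^2}}\le\frac{\bar u(x_1)-\Psi(x_1)}{\sqrt{1-|D\Psi(x_1)|^2}}\le\sup_{\{u>\Psi\}}\frac{\bar u-\Psi}{\sqrt{1-|D\Psi|^2}},
\]
which is the asserted inequality after dividing by $u(x_0)-\Psi(x_0)>0$. The proof is short; the only point that genuinely needs care — and the one I would expect to be the main (mild) obstacle — is verifying that $\bar A$ is compactly contained in $\Omega$, so that $g$ has an interior maximum: this relies precisely on the hypothesis $\Psi>\bar u$ near $\partial\Omega$ together with $\ell\le u\le\bar u$. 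The auxiliary reduction to convex $\Omega$ needed for $\ell\le u$ is harmless, since the lemma is only ever invoked on balls.
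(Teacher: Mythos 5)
Your tangent--plane comparison is correct and is the same argument underlying Bayard--Schn\"urer's Lemma~5.1, which the paper cites rather than reproves: the supporting affine function $\ell$ of the convex $u$ at $x_0$ plays the role of a comparison object, the boundary condition $\Psi>\bar u>u\ge\ell$ near $\partial\Omega$ forces the positivity set of $\ell-\Psi$ to be compactly contained in $\Omega$, and at its interior maximizer $x_1$ one gets $D\Psi(x_1)=Du(x_0)$, which transfers the gradient from $u$ to $\Psi$ and chains with $\ell\le u<\bar u$ to give the bound. The one point you rightly flag, passing to convex $\Omega$ so that $\ell\le u$ holds globally, is harmless since the lemma is only invoked on balls.
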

For $k$-convex, spacelike hypersurfaces, Bayard \cite{Bayard05} proved a similar result when $k=2$. In the following, we will extend it to all $k.$ Our argument is a modification of Bayards' in \cite{Bayard05}. We would also like to mention that the basic idea of this argument had appeared in Chow-Wang \cite{CW}.
\begin{lemm}
\label{lem-local c1}
Let $\Omega\subset \R^n$ be a bounded open set. Let $u, \bar{u}, \Psi:\Omega\rightarrow\mathbb{R}^n$ be strictly spacelike. Assume that $\M_u=\{(x,u(x))|x\in\Omega\}$ is a k-convex hypersurface satisfying
$$\sigma_k(\kappa[\M_u])=\psi(x,u(x))$$
and $u\leq\bar{u}$ in $\Omega$.
Also assume that near $\partial\Omega,$ we have $\Psi>\bar{u}.$
Consider the set, where $u>\Psi.$ For every $x$ in this set, we have the following gradient estimate for $u:$
\[\frac{1}{\sqrt{1-|Du|^2}}\leq\lt[\frac{1}{u(x)-\Psi(x)}\cdot\sup\limits_{\{u>\Psi\}}(\bar{u}-\Psi)\rt]^NC.\]
Here, $N=N(n, k)$ is a uniform constant only depending on $n,k,$ and $C=C(\uu-\Psi, |\Psi|_{C^2}, |\psi|_{C^1})$
is a uniform constant depending on the upper bound of $\uu-\Psi$, $\frac{1}{\sqrt{1-|D\Psi|^2}}$, $D^2\Psi,$ and $|\psi|_{C^1}$.
\end{lemm}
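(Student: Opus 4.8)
The plan is to adapt the Bayard--Schnürer argument from Lemma \ref{lc1lem1} (the $k=n$, strictly convex case) to the general $k$-convex setting, following Bayard's modification for $k=2$ and the line of ideas in Chow--Wang \cite{CW}. The starting point is to write the height function downstairs, passing through a suitable test quantity that compares $u$ with the barrier $\Psi$. Concretely, I would work on the open set $\{u>\Psi\}$ (which is compactly contained in $\Omega$ since $\Psi>\bar u\geq u$ near $\partial\Omega$), and consider a test function of the form $w=\eta^\beta V$ where $V=(1-|Du|^2)^{-1/2}$ is the gradient quantity, $\eta=u-\Psi$ is the cutoff that vanishes on $\partial\{u>\Psi\}$, and $\beta>0$ is a large exponent to be chosen. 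The exponent $\beta$ is exactly what produces the power $N=N(n,k)$ in the final estimate; in the strictly convex case one can take $\beta$ essentially $1$, but when only $k$-convexity is available one needs the extra room.

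The key steps, in order, are as follows. First, I would compute the evolution (i.e., apply the linearized operator $\s_k^{ij}\p_{ij}$, or rather the geometric linearization $\s_k^{ij}\nabla_i\nabla_j$ on $\M_u$) of $\log V$ and of $\log\eta$ separately, using the fundamental equations \eqref{Gauss} and \eqref{a1.2} together with the differentiated equation $\s_k^{ii}h_{iil}=d_X\psi(\tau_l)+\kappa_l\, d_\nu\psi(\tau_l)$ (here $\psi$ depends only on $(x,u)$, so the $d_\nu$ term is absent and the computation is cleaner: $\s_k^{ii}h_{iil}=\psi_{x_l}+\psi_u u_l$). The term $\s_k^{ij}\nabla_i\nabla_j\log V$ produces the crucial \emph{good} term $\s_k^{ii}\kappa_i^2 \cdot V$ up to controllable errors, using $\s_k^{ii}\geq 0$ for $k$-convex $\kappa$ and the Gauss equation. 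Second, I would handle $\s_k^{ij}\nabla_i\nabla_j\eta$: since $\eta=u-\Psi$ and $u$ solves the equation, $\s_k^{ij}\nabla_i\nabla_j u$ is essentially $k\psi$ times a bounded factor (via $\s_k^{ij}h_{ij}=k\s_k=k\psi$ and the relation between $h_{ij}$ and $u_{ij}$), while $\s_k^{ij}\nabla_i\nabla_j\Psi$ is controlled by $|\Psi|_{C^2}$ times $\sum_i\s_k^{ii}$; the latter sum is bounded below by a positive constant (by the same homogeneity/concavity argument as in \eqref{3.21}) and can be absorbed. Third, at an interior maximum $P_0$ of $\eta^\beta V$ one has $\nabla\log(\eta^\beta V)=0$, i.e. $\beta\nabla\eta/\eta=-\nabla\log V$; I would substitute this to kill the first-order gradient terms $\s_k^{ii}(\log V)_i^2$ against $\beta^2\s_k^{ii}(\eta_i/\eta)^2$, which is precisely where a large $\beta$ (hence the exponent $N$) is needed to dominate the cross terms and the negative contributions coming from the lack of full convexity. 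Combining, at $P_0$ the good term $\s_k^{ii}\kappa_i^2 V$ must be dominated by terms that are at most linear in $V$ (up to the $C^1$ bound on $\psi$ and $C^2$ bound on $\Psi$), and using $\sum_i\s_k^{ii}\kappa_i^2\gtrsim (\s_k)^{?}$ — more precisely a Newton--MacLaurin type lower bound $\sum\s_k^{ii}\kappa_i^2\geq c\,\kappa_{\max}\cdot\s_k$ or $\geq c\,\s_k^{1+1/k}$ type inequality valid in $\Gamma_k$ — one converts this into a bound on $V$ at $P_0$, hence everywhere after unwinding the cutoff.

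The main obstacle I anticipate is the second step combined with the third: ensuring that the negative terms (the ones that in the strictly convex case are automatically nonpositive because all $\kappa_i>0$, but here only $\s_m(\kappa)>0$ for $m\leq k$ is available) can be absorbed into the good term $\s_k^{ii}\kappa_i^2 V$. This is where the choice of $\beta=\beta(n,k)$ enters nontrivially and where one must invoke the structural inequalities for $\s_k$ in the Gårding cone $\Gamma_k$ (divided-difference positivity, concavity of $\s_k^{1/k}$, and the comparison $\sum_i\s_k^{ii}\kappa_i^2\gtrsim \kappa_{\max}\s_k$). Getting the power $N=N(n,k)$ right — rather than an estimate that degenerates — is the technical heart; the remaining estimates ($C^0$ barrier comparison, boundedness of $\sum\s_k^{ii}$ from below, the $C^1$-dependence of the constant $C$ on $\bar u-\Psi$, $|\Psi|_{C^2}$, $|\psi|_{C^1}$) are routine once the maximum-principle inequality closes.
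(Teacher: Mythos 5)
Your choice of test function $\eta^\beta V=(u-\Psi)^\beta(1-|Du|^2)^{-1/2}$ is exactly the paper's $\phi=(u-\Psi)^N(-\langle\nu,E\rangle)$, and the overall maximum-principle skeleton (differentiate twice, contract with $\sigma_k^{ij}$, use the first-order condition to eliminate gradient cross terms) is the same. What is missing, however, is the mechanism that makes the argument close in the $k$-convex setting, and it is not the one you describe.

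You propose to extract a ``good'' term of the form $\sigma_k^{ii}\kappa_i^2\,V$ and dominate it via a Newton--MacLaurin type inequality such as $\sum_i\sigma_k^{ii}\kappa_i^2\gtrsim\kappa_{\max}\,\sigma_k$. That route does not produce the required quadratic growth in $V$. The paper's actual good term is $N^2\sigma_k^{11}\,V^2/(u-\Psi)^2$, and it is manufactured by a quite specific chain of observations. First, one picks coordinates so that $u_1^2\geq|\nabla u|^2/n$; the first-order critical-point condition for $\phi$ then forces the corresponding principal curvature to satisfy $\kappa_1=\frac{N\langle\nu,E\rangle}{u-\Psi}(1-\Psi_1/u_1)$, which is \emph{large and negative} once $|\nabla u|$ is large --- this sign is essential and it is exactly what is \emph{not} automatic when the hypersurface is only $k$-convex rather than convex. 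Second, one uses Lin--Trudinger's inequality (inequality (26) in \cite{LT}) for $\Gamma_k$, namely $\sum_{i\geq 2}\sigma_k^{ii}\kappa_i^2\geq\eta\,\sigma_k^{11}\kappa_1^2$; combined with the elementary bound $u_i^2/V^2\leq 1-1/n$ for $i\geq 2$, this yields $\sum_i\sigma_k^{ii}\kappa_i^2\bigl(1-u_i^2/V^2\bigr)\geq\frac{\eta}{n}\sigma_k^{11}\kappa_1^2\geq\frac{\eta_0N^2}{4}\sigma_k^{11}\,V^2/(u-\Psi)^2$. Third, the negative contributions are of size $N\sigma_{k-1}V^2$ (since $\sum\sigma_k^{ii}=(n-k+1)\sigma_{k-1}$ and $|\nabla u|^2\leq V^2$), and they are absorbed precisely because $\sigma_{k-1}\leq\sigma_k^{11}$ holds whenever $\kappa_1<0$. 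None of the ``structural inequalities'' you mention (concavity of $\sigma_k^{1/k}$, divided-difference positivity) are actually invoked, and the inequality $\sum\sigma_k^{ii}\kappa_i^2\gtrsim\kappa_{\max}\sigma_k$ is not used and would not by itself give the $V^2$ factor needed. So the gap is concrete: your plan never identifies the forced sign of $\kappa_1$ at the maximum point as the engine of the estimate, which is the genuinely non-obvious ingredient distinguishing the $k$-convex case from the strictly convex Bayard--Schn\"urer case.
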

\begin{proof}
Consider the test function:
$$\phi=(u-\Psi)^N(-\lt<\nu, E\rt>),$$ where $N$ is a large undetermined constant. Assume the function $\phi$ achieves its maximum at $P.$
We may choose a local normal coordinate $\{\tau_1, \cdots, \tau_n\}$ such that at $P,$ $h_{ij}=\ka_i\delta_{ij}.$ Differentiating
$\phi$ twice at $P,$ we have,
\begin{eqnarray}\label{40}
0&=&\frac{\phi_i}{\phi}=N\frac{u_i-\Psi_i}{u-\Psi}+\frac{h_{im}u_m}{-\lt<\nu, E\rt>},\\
0&\geq&\frac{\phi_{ii}}{\phi}-\frac{\phi_i^2}{\phi^2}=N\frac{u_{ii}-\Psi_{ii}}{u-\Psi}-N\frac{(u_i-\Psi_i)^2}{(u-\Psi)^2}\nonumber\\
&&+\frac{\sum_mh_{im}^2(-\lt<\nu, E\rt>)+\sum_mh_{imi}u_m}{-\lt<\nu, E\rt>}-\frac{(\sum_mh_{im}u_m)^2}{(-\lt<\nu, E\rt>)^2}\nonumber
\end{eqnarray}
Contracting with $\sigma_k^{ii}$, we get
\begin{eqnarray}\label{400}
0\geq \frac{\sigma^{ii}_k\phi_{ii}}{\phi}&=&N\frac{\sigma_k^{ii}u_{ii}-\sigma_k^{ii}\Psi_{ii}}{u-\Psi}-N\frac{\sigma_k^{ii}(u_i-\Psi_i)^2}{(u-\Psi)^2}\\
&&+\sigma^{ii}_k\kappa_{i}^2+\frac{\sigma_k^{ii}\sum_mh_{iim}u_m}{-\lt<\nu, E\rt>}-\frac{\sigma_k^{ii}\kappa_i^2u_i^2}{(-\lt<\nu, E\rt>)^2}\nonumber
\end{eqnarray}

Without loss of generality, we may assume that at $P$
$$u_1^2\geq \frac{|\nabla u|^2}{n},$$
where $\nabla$ is the Levi-Civita connection on $\M_u.$
By \eqref{40}, we have
\begin{eqnarray}
\kappa_1=\frac{N\lt<\nu, E\rt>}{u-\Psi}\left(1-\frac{\Psi_1}{u_1}\right).\nonumber
\end{eqnarray}
We may also assume $|\nabla u(P)|$ is so large that $|\frac{\Psi_1}{u_1}|<\frac{1}{2}$. Then at $P$ we can see,
\begin{eqnarray}\label{41}
\kappa_1<\frac{N}{2}\frac{\lt<\nu, E\rt>}{u-\Psi}.
\end{eqnarray}
Thus, if $N$ is sufficiently large, $\kappa_1$ is negative and its norm is large. Using the inequality (26) in Lin-Trudinger \cite{LT}, we obtain
$$\sum_{i\geq 2}\sigma_k^{ii}\kappa_{i}^2\geq \eta \sigma^{11}_k\kappa_1^2,$$
where $\eta$ is a uniform constant only depending on $n,k$. Therefore,
\begin{eqnarray}
\sigma_k^{ii}\kappa_{i}^2-\frac{\sigma_k^{ii}\kappa_i^2u^2_i}{(-\lt<\nu, E\rt>)^2}\geq \nonumber
\sum\limits_{i\geq 2}\s_k^{ii}\ka_i^2-\lt(1-\frac{1}{n}\rt)\sum\limits_{i\geq 2}\s_k^{ii}\ka_i^2\geq\frac{\eta}{n}\s_k^{11}\ka_1^2\nonumber
:=\eta_0\s_k^{11}\ka_1^2.\nonumber
\end{eqnarray}
By \eqref{41}, we get
\begin{eqnarray}\label{42}
\sigma_k^{ii}\kappa_{i}^2-\frac{\sigma_k^{ii}\kappa_i^2u^2_i}{(-\lt<\nu, E\rt>)^2}\geq \frac{\eta_0 N^2}{4}
\sigma^{11}_k\frac{(-\lt<\nu, E\rt>)^2}{(u-\Psi)^2}.
\end{eqnarray}
Inserting \eqref{curvature} and \eqref{42} into \eqref{400} yields,
\begin{eqnarray}\label{43}
0&\geq& N(u-\Psi)[\sigma_k^{ii}\kappa_{i}(-\lt<\nu, E\rt>)-\sigma_k^{ii}\Psi_{ii}]-N\sigma_k^{ii}(u_i-\Psi_i)^2\\
&&+(u-\Psi)^2\frac{\sum_m\psi_mu_m}{-\lt<\nu, E\rt>}+\frac{\eta_0 N^2}{4} \sigma^{11}_k(-\lt<\nu, E\rt>)^2.\nonumber
\end{eqnarray}
Notice that
\[\psi_m=\sum\limits_{l=1}^n\psi_{x_l}\lt<\tau_m, \frac{\p}{\p x_l}\rt>+\psi_u\lt<-\tau_m, E\rt>,\]
we calculate,
\be\label{44}
\frac{\sum_m\psi_mu_m}{-\lt<\nu, E\rt>}\geq-C\lt(1+\lt<-\nu, E\rt>\rt).
\ee
Combing \eqref{43} with \eqref{44}, we get
\begin{eqnarray}\label{45}
\\
0&\geq&-(n-k+1)N(\uu-\Psi)\sigma_{k-1}|\nabla^2 \Psi|-2(n-k+1)N\sigma_{k-1}(|\nabla u|^2+|\nabla \Psi|^2)\nonumber\\
&&-C(\uu-\Psi)^2\lt(1+\lt<-\nu, E\rt>\rt)+\frac{\eta_0 N^2}{4} \sigma^{11}_k(-\lt<\nu, E\rt>)^2.\nonumber
\end{eqnarray}
Notice that when $\kappa_1<0$, we have
$$\sigma_{k-1}=\kappa_1\sigma_{k-2}(\kappa|1)+\sigma_{k-1}(\kappa|1)\leq \sigma_k^{11}.$$
Moreover, $-\lt<\nu, E\rt>=\sqrt{1+|\nabla u|^2}.$ Let $N$ be sufficiently large in \eqref{45}, we obtain the desired estimate.
\end{proof}

\subsection{The Pogorelov type local $C^2$ estimates }
Recall that in \cite{WX} (see Lemma 24), we proved the Pogorelove type local $C^2$ estimate for strictly convex, spacelike, constant $\s_k$ curvature hypersurfaces.
With small modifications, we can show
\begin{lemm}
\label{lem-local c2}
Let $\urs$ be the solution of \eqref{D-approximate} and $\ur$ be the Legendre transform of $\urs.$
For any given $s>2C_0+1,$ where $C_0>\min\uu$ is an arbitrary constant,  let $r_s>0$ be a positive number such that when $r>r_s,$
$\ur|_{\partial\Omega_r}>s,$ where $\Omega_r=D\urs(B_r).$ Let $\ka_{\max}(x)$ be the largest principal curvature of $\M_{\ur}$ at $x,$
where $\M_{\ur}=\{(x, \ur(x)) |\,x\in\Omega_r\}.$ Then, for $r>r_s$ we have
\be\label{local c2}\max_{\M_{\ur}}(s-\ur)\la_{\max}\leq C.\ee
Here, $C$ depends on the local $C^1$ estimates of $\ur$ and $s.$
\end{lemm}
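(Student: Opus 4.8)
The plan is to recast the estimate as an interior one on the hypersurface $\M_{\ur}$ and run a Pogorelov-type maximum-principle argument in which the weight $(s-\ur)$ serves as a cutoff, following the scheme of Lemma~24 in \cite{WX} and adding the extra terms coming from the $(X,\nu)$-dependence of $\psi$ as in the global estimate Lemma~\ref{lem c2 global}. Since $\urs$ is the strictly convex solution of \eqref{D-approximate}, its Legendre transform $\ur$ is strictly convex and spacelike on $\Omega_r=D\urs(B_r)$, and by the duality in Subsection~\ref{lt} the hypersurface $\M_{\ur}$ satisfies the original equation $\s_k(\ka[\M_{\ur}])=\psi(X,\nu)$ with $c_2\le\psi\le c_1$. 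Because $r>r_s$ we have $\ur>s$ on $\p\Omega_r$, so $U_s:=\{\ur<s\}$ is a nonempty bounded open set with $\overline{U_s}\subset\Omega_r$, and the local $C^1$ estimate (Lemma~\ref{lc1lem1}, applied with a barrier built from $C_0$ and $s$) gives a bound $V:=-\lt<\nu,E\rt>\le V_0(s,\cdot)$ on $\overline{U_s}$. It therefore suffices to bound $\la_{\max}$ on $U_s$ by $C/(s-\ur)$.

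Test with a function of the form used in \cite{WX},
\[\Theta=\log\la_{\max}+\log(s-\ur)+N V,\]
$N$ a large constant. Since $\log(s-\ur)\to-\infty$ on $\p U_s$ while, for fixed $r$, $\la_{\max}$ and $V$ stay finite inside, $\Theta$ attains its maximum at an interior point $P_0$ with $\ur(P_0)<s$, and the bound produced will be independent of $r$. At $P_0$ choose a normal frame with $h_{ij}=\ka_i\delta_{ij}$, $\ka_1=\la_{\max}$, and replace $\log\la_{\max}$ by $\log h_{11}$ with $\tau_1$ frozen. Writing $w=s-\ur$, the first- and second-order conditions are $\frac{h_{11i}}{\ka_1}+\frac{w_i}{w}+NV_i=0$ and $\s_k^{ii}\Theta_{ii}\le0$. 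The computation rests on: (i) since $\ur=-\lt<X,E\rt>$, the Gauss formula $X_{ij}=h_{ij}\nu$ gives $\nabla^2_{ij}\ur=V h_{ij}$, so $w$ is concave along $\M_{\ur}$ and $\s_k^{ii}w_{ii}=-kV\psi$; (ii) the Weingarten formula gives $V_i=h_{ij}u_j$ and $\s_k^{ij}\nabla^2_{ij}V=(\nabla_s\psi)u_s+V\s_k^{ii}\ka_i^2$; (iii) the commutation identity \eqref{a1.2} (equivalently \eqref{newdp216}) gives $h_{11ii}=h_{ii11}-\ka_i\ka_1^2+\ka_i^2\ka_1$; (iv) differentiating $\s_k(\ka[\M_{\ur}])=\psi(X,\nu)$ once and twice gives, exactly as in \eqref{dp2.14}--\eqref{dp2.15}, $\s_k^{ii}h_{iil}=d_X\psi(\tau_l)+\ka_l d_\nu\psi(\tau_l)$ and $\s_k^{ii}h_{ii11}+\s_k^{pq,rs}h_{pq1}h_{rs1}\ge-C-C\ka_1^2+\sum_s h_{s11}d_\nu\psi(\tau_s)$.

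Assembling these in $\s_k^{ii}\Theta_{ii}\le0$ one isolates the good terms $V\s_k^{ii}\ka_i^2$ (from $\nabla^2V$) and $\s_k^{ii}\ka_i^2$ (from the commutation), the negative curvature term $-k\psi\ka_1$ (also from the commutation), the term $-C\ka_1^2$ and the third-order term $\sum_s h_{s11}d_\nu\psi(\tau_s)$ (from the $\nu$-dependence of $\psi$), and the third-order terms $-\frac{\s_k^{ii}h_{11i}^2}{\ka_1^2}$, $-\frac{\s_k^{ii}w_i^2}{w^2}$, $-\s_k^{pq,rs}h_{pq1}h_{rs1}$. The last three are handled exactly as in \cite{WX} and \cite{RW,RW1}: split the indices according to whether $|\ka_i|\ge\delta\ka_1$ or $|\ka_i|<\delta\ka_1$, use the concavity of $\s_k^{1/k}$ (separating diagonal and off-diagonal second derivatives and keeping the nonnegative off-diagonal term $\sum_{i\neq1}\frac{\s_k^{ii}-\s_k^{11}}{\ka_1-\ka_i}h_{11i}^2$), invoke the Lin--Trudinger inequality for $\s_k$ (as in Lemma~\ref{lem-local c1}), and use the relation $\Theta_i=0$ only on the ``small'' indices. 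The term $\sum_s h_{s11}d_\nu\psi(\tau_s)$ pairs with the $\sum_s Nu_s\s_k^{ii}h_{sii}$ produced by $\nabla^2V$, and the remaining negative contributions $\tfrac1{\ka_1}\nabla^2_{11}\psi$, $-C\ka_1$ and $-k\psi\ka_1$ are absorbed by $NV\s_k^{ii}\ka_i^2\ge NV\s_k^{11}\ka_1^2$, where the elementary bound $\s_k^{11}\la_{\max}\ge c(n,k)\s_k\ge c(n,k)c_2>0$ (valid because $\ka_1=\la_{\max}$) shows this good term is of size $\gtrsim N\ka_1$. Fixing $N$ large, then $\delta$ small, in the appropriate order and assuming $(s-\ur)\la_{\max}$ is large at $P_0$ yields a contradiction, whence \eqref{local c2}.

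The main obstacle is precisely this bookkeeping of the third-order terms: substituting $\Theta_i=0$ into $-\frac{\s_k^{ii}h_{11i}^2}{\ka_1^2}$ generates a term $-N^2\s_k^{ii}\ka_i^2u_i^2$ that cannot be absorbed by $NV\s_k^{ii}\ka_i^2$ for bounded $V$ and large $N$, so one must restrict the use of the first-order relation to the indices with $|\ka_i|<\delta\ka_1$ and compensate the remaining indices with the off-diagonal concavity term — the same mechanism, and the same delicate ordering of $N$, $\delta$, and the threshold for $\la_{\max}$, as in the proof of Lemma~24 of \cite{WX}. Everything else, in particular carrying along the first and second derivatives of $\psi(X,\nu)$, is routine given the computations already made in the proof of Lemma~\ref{lem c2 global}.
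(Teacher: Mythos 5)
Your overall strategy is the same as the paper's: the paper simply cites Lemma~24 of \cite{WX} (the constant-$\sigma_k$, strictly convex Pogorelov estimate) and says ``with small modifications'', and you propose exactly that --- a Pogorelov-weight maximum principle with the commutation identity, the twice-differentiated equation, and the $\psi(X,\nu)$-dependent terms handled as in Lemma~\ref{lem c2 global}. Your dual identities in items (i)--(iv), the observation that the $d_\nu\psi$ third-order term cancels against the $N\sum_s u_s\kappa_s d_\nu\psi(\tau_s)$ term coming from $\nabla^2 V$ once the critical equation is used, and the use of $\sigma_k^{11}\kappa_1\geq c(n,k)\sigma_k$ (valid in the convex cone) to make $NV\sigma_k^{ii}\kappa_i^2$ dominate $C\kappa_1$ are all correct and match what the paper implicitly invokes.

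The step you are waving through, however, is exactly where the ``small modifications'' are not small. Your test function is $\Theta=\log\lambda_{\max}+\log(s-\ur)+NV$, but the paper's own Pogorelov estimates of precisely this type --- Lemma~\ref{lem c2 global} and especially the $k$-convex analogue Lemma~\ref{lem-local c2*} --- use
\[
\phi=\beta\log(s-u)+\log\log P-N\langle\nu,E\rangle,\qquad P=\sum_\ell e^{\kappa_\ell},
\]
and both additional devices do structural work that your $\Theta$ does not provide. The $\log\log P$ supplies the $1/\log P$ factor in $E_i=\frac{1+\log P}{P\log P}\sigma_k^{ii}P_i^2$, which is exactly what makes the Ren--Wang inequality \eqref{dp2.21} (Lemma~12 of \cite{RW}) true; with $\log\lambda_{\max}$ the corresponding quantity is $\sigma_k^{ii}h_{11i}^2/\kappa_1^2$ with no small factor, and your attempt to dominate it for the ``big'' indices by the off-diagonal concavity term reduces to $\kappa_1\sigma_{k-2}(\kappa|1i)\geq\sigma_{k-1}(\kappa|1i)$, which is \emph{not} a general fact (already for $k<(n+1)/2$ one can cook up convex $\kappa$'s violating it). And the $\beta$ is what controls the $N^2$ terms produced by substituting the critical equation on the ``small'' indices: in \eqref{new52} the dangerous term appears as $-\frac{2N^2}{\beta}\sigma_k^{ii}u_i^2\kappa_i^2$, and the argument chooses $\beta=\beta(N)$ large \emph{after} $N$; your $\Theta$ has coefficient $1$, so the term $-N^2\sigma_k^{ii}\kappa_i^2u_i^2$ you correctly identify as the obstacle is not damped by anything, and ``restrict to $|\kappa_i|<\delta\kappa_1$'' by itself still leaves a contribution of order $N^2\delta\kappa_1$ that must be dominated by $N\kappa_1$ --- which forces $\delta\lesssim 1/N$ and then silently re-enters the big-index regime where the absorption you need isn't available. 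So either reinstate the $\log\log P$ and the $\beta\log(s-\ur)$ weight (which is what \cite{WX}~Lemma~24 and the paper's Lemma~\ref{lem-local c2*} do), or give a genuine argument using the strict convexity to show the two devices are unnecessary; as written, the closing of the estimate is asserted rather than proved. (The appeal to the Lin--Trudinger inequality is also out of place here --- that is for a negative extremal eigenvalue, which cannot occur on a strictly convex hypersurface.)
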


In the rest of this subsection, we will establish the Pogorelov type local $C^2$ estimates for the $k$-convex solution of
equation \eqref{curvature}, where $k=n-1\,(n\geq 3), n-2\,(n\geq 5).$

\begin{lemm}
\label{lem-local c2*}
Let $u^n$ be the $k$-convex solution of \eqref{Dirichlet curvature} with $\psi=\psi(X, \nu)$, where $k=n-1\,(n\geq 3), n-2\,(n\geq 5).$
For any given $s>1,$ let $m>s,$ then
$u^m|_{\partial \Omega_m}=m>s.$ Let $\ka_{\max}(x)$ be the largest principal curvature of $\M_{u^m}$ at $x,$
where $\M_{u^m}=\{(x, u^m(x)) |\,x\in \Omega_m\}.$ Then, for $m>s$ we have
\[\max_{\M_{u^m}}(s-u^m)\ka_{\max}\leq C.\]
Here, $C$ depends on the local $C^1$ estimates of $u^m$ and $s.$
\end{lemm}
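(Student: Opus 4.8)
**Proof plan for Lemma \ref{lem-local c2*}.**

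The plan is to mimic the argument of Lemma \ref{lem-local c2} (and of Lemma 24 in \cite{WX}), which is a Pogorelov-type interior estimate, but now for the $k$-convex equation $\s_k(\ka[\M_{u^m}])=\psi(X,\nu)$ with $k=n-1$ or $k=n-2$. First I would set up the test function
\[
\Phi = \log\log P + \log(s-u^m) - N\lt<\nu, E\rt>,
\]
where $P=\sum_l e^{\ka_l}$ as in the proof of Lemma \ref{lem c2 global}, $N$ is a large constant to be fixed, and the weight $(s-u^m)$ is positive on $\M_{u^m}\cap\{u^m<s\}$ and vanishes on the set $\{u^m=s\}$, which is compactly contained in $\Omega_m$ since $u^m|_{\p\Omega_m}=m>s$. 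Thus $\Phi$ attains its maximum at an interior point $P_0$ where $u^m<s$; it suffices to bound $(s-u^m)\ka_{\max}$ at $P_0$. Choose a normal frame $\{\tau_1,\dots,\tau_n\}$ at $P_0$ with $h_{ij}=\la_i\delta_{ij}$ and $\la_1\geq\cdots\geq\la_n$; we may assume $\la_1$ is large.

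Next I would differentiate $\Phi$ twice at $P_0$, contract with $\s_k^{ii}$, and use the differentiated equations \eqref{dp2.14}, \eqref{dp2.15} together with the Gauss–Codazzi–Ricci identities \eqref{newdp216} exactly as in the proof of Lemma \ref{lem c2 global}. The new ingredient relative to that lemma is the term coming from the weight $\log(s-u^m)$: its first derivative contributes $-u^m_i/(s-u^m)$ to the critical-point equation $\Phi_i=0$, and its second derivative contributes
\[
\s_k^{ii}\,\frac{-(u^m)_{ii}}{s-u^m}-\s_k^{ii}\,\frac{(u^m_i)^2}{(s-u^m)^2}
\]
after contraction. Using $(u^m)_{ij}=h_{ij}\lt<\nu,E\rt>+\delta_{ij}$ in Minkowski space (Gauss formula combined with $u^m=-\lt<X,E\rt>$), one sees $\s_k^{ii}(u^m)_{ii}\lt<\nu,E\rt>^{-1}=\s_k^{ii}\ka_i(1+\text{l.o.t.})$, which is controlled by the equation and contributes a good (negative $\times$ negative) term, while the remaining first-order weight terms are absorbed into the $N\lt<\nu,E\rt>$ piece and the $\log\log P$ gradient terms by Cauchy–Schwarz. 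The crucial structural fact is that, after these manipulations, the bad third-order terms organize into precisely the combination $A_i+B_i+C_i+D_i-E_i$ appearing in \eqref{dp2.18}; then Claim \ref{claim 1} and Claim \ref{claim 2} — which are exactly where the restriction $k=n-1\,(n\geq 3)$, $k=n-2\,(n\geq 5)$ is used — show this combination is nonnegative once $K$ and $\la_1$ are large. What is left is
\[
0\ \geq\ \lt(-N\lt<\nu,E\rt>-1\rt)\s_k^{ii}\ka_i^2\ -\ C\ka_1\ -\ \frac{C}{(s-u^m)^2}\,\s_k^{11}\ -\ \frac{C}{s-u^m}\,\ka_1,
\]
schematically, plus the positive $P_i^2$ remainder; choosing $N$ large so that $(-N\lt<\nu,E\rt>-1)\s_k^{11}\ka_1^2$ dominates all terms linear in $\ka_1$ and the weighted $\s_k^{11}$ terms yields, at $P_0$, a bound of the form $(s-u^m)\ka_1\leq C$ with $C$ depending only on $s$ and the local $C^1$ bound for $u^m$ (hence on $\psi$ and $\Omega_m$ through Lemma \ref{lem gradient}). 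Exponentiating $\Phi$ then transfers this to the claimed estimate on all of $\M_{u^m}$.

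The main obstacle I anticipate is bookkeeping the weight $\log(s-u^m)$ so that its contributions genuinely fit into the scheme of \eqref{dp2.18} rather than spoiling the delicate third-order cancellation in Claims \ref{claim 1} and \ref{claim 2}: one must verify that the extra first-order term $-u^m_i/(s-u^m)$, when inserted into $P_i^2$ via $\Phi_i=0$, only changes the coefficients $E_i$ by controllable amounts and does not affect the signs exploited in \eqref{dp2.21}–\eqref{dp2.24}. A secondary technical point is that $\lt<\nu,E\rt>=-\sqrt{1+|\nabla u^m|^2}$ and $u^m$ need not be convex (only $k$-convex), so convexity-based simplifications available in Lemma \ref{lem-local c2} are unavailable here; instead one relies purely on the algebraic identities for $\s_k^{ii}$, $\s_k^{ii,jj}$ and the cone condition $\ka\in\Gamma_k$, which is exactly the setting handled in \cite{RW, RW1} and invoked through Claim \ref{claim 2}.
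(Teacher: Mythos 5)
Your plan follows the paper's overall strategy, but the test function is under-powered and one of your identities is wrong.

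The paper's test function is $\phi=\beta\log(s-u)+\log\log P-N\lt<\nu,E\rt>$ with a \emph{large} constant $\beta$ in front of the weight, not coefficient one. This $\beta$ is not cosmetic. From $\phi_i=0$ one obtains
$\frac{\beta u_i}{s-u}=\frac{P_i}{P\log P}+N\kappa_iu_i$, hence
$\frac{\beta u_i^2}{(s-u)^2}\leq \frac{2}{\beta}\bigl(\frac{P_i}{P\log P}\bigr)^2+\frac{2N^2}{\beta}\kappa_i^2u_i^2$.
The term $\frac{2}{\beta}\sigma_k^{ii}\bigl(\frac{P_i}{P\log P}\bigr)^2$ must be absorbed by the leftover good term from Claim \ref{claim 1}, which is exactly $\frac{\alpha}{(P\log P)^2}\sigma_k^{ii}P_i^2$ with $\alpha=\frac{1-2\varepsilon}{1+\varepsilon}<1$; this forces $\alpha\beta>2$, which is impossible with $\beta=1$. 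The other piece $\frac{2N^2}{\beta}\sigma_k^{ii}\kappa_i^2u_i^2$ must in turn be absorbed by $(-N\lt<\nu,E\rt>-1)\sigma_k^{ii}\kappa_i^2$, which requires $\beta$ large depending on $N$; so the order of choices ($N$ first, then $\beta=\beta(N)$) matters. Your remark that the weight term ``only changes $E_i$ by controllable amounts'' is precisely the point that fails with coefficient one.

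Two further corrections. First, the identity $(u^m)_{ij}=h_{ij}\lt<\nu,E\rt>+\delta_{ij}$ is not right: on the hypersurface in $\R^{n,1}$ the Gauss formula gives $u_{ij}=-\lt<X_{ij},E\rt>=-h_{ij}\lt<\nu,E\rt>$, with no $\delta_{ij}$ term (the $+\delta_{ij}$ arises in the hyperbolic Gauss-map setting of Subsection \ref{gg}, not here). Consequently the contracted weight term is $\frac{\beta k\sigma_k\lt<\nu,E\rt>}{s-u}$, which is \emph{negative} since $\lt<\nu,E\rt><0$; it is not ``good,'' merely bounded below by $-\frac{C\beta}{s-u}$. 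Second, the final step is not a single dominance argument: the paper first reduces to $\frac{\beta C}{s-u}+\sum_{|\kappa_i|\geq\delta\kappa_1}\frac{2\beta\sigma_k^{ii}u_i^2}{(s-u)^2}\geq\sum_{|\kappa_i|\geq\delta\kappa_1}\sigma_k^{ii}\kappa_i^2(-N\lt<\nu,E\rt>-1)$ and then splits into two cases according to which of the two left-hand pieces dominates, each giving $(s-u)\kappa_1\leq C$. Incorporating $\beta$, the ordering of $N$ and $\beta$, and this case split would make your outline match the paper's argument.
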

\begin{proof}
In this proof, for our convenience when there is no confusion, we will drop the superscript on $u^m$.
Now, on $\Omega_m$, we consider the following test function whose form first appeared in
\cite{GRW},
$$
\phi=\beta\log (s-u)+\log\log P-N\lt<\nu, E\rt>.
$$
Here the function $P$ is defined by $$P=\dsum_le^{\kappa_l}, $$ and $\beta, N$ are constants to be determined later.

Let $U_s=\{x\in \R^n| u(x)<s\},$ we may assume that the maximum of $\phi$ is achieved at $P_0\in U_s$.
Choose a local normal coordinate $\{\tau_1,\tau_2,\cdots,\tau_n\}$ such that at $P_0,$ $h_{ij}=\ka_i\delta_{ij}$ and
$\kappa_1\geq \kappa_{2}\cdots\geq \kappa_{n}.$

Differentiating the function $\phi$ twice  at  $P_0$, we get
\begin{equation}\label{e2.11}
\phi_i = -\frac{\beta u_i}{s-u}+\dfrac{P_i}{P\log P}+Nh_{ii}u_i = 0,
\end{equation}
and
\begin{eqnarray*}
0&\geq&\phi_{ii}\\
&=& \frac{P_{ii}}{P\log P}-\frac{P_i^2}{P^2\log P}-\frac{P_i^2}{(P\log P)^2}+ \frac{\beta h_{ii}\lt<\nu, E\rt>}{s-u}-\frac{\beta u_i^2}{(s-u)^2}\nonumber\\
&&-Nh_{ii}^2\lt<\nu, E\rt>+\sum_sNu_sh_{isi}\nonumber\\
&=&\frac{1}{P\log
P}\bigg[\sum_le^{\kappa_l}h_{llii}+\sum_le^{\kappa_l}h_{lli}^2+\sum_{p\neq
q}\frac{e^{\kappa_{p}}
-e^{\kappa_{q}}}{\kappa_{p}-\kappa_{q}}h_{pq i}^2-\Big(\frac{1}{P}+\frac{1}{P\log P}\Big)P_i^2\bigg]\nonumber\\
&&+ \frac{\beta h_{ii}\lt<\nu, E\rt>}{s-u}-\frac{\beta u_i^2}{(s-u)^2}-Nh_{ii}^2\lt<\nu, E\rt>+\sum_sNu_sh_{iis}\nonumber
\end{eqnarray*}
Contracting with $\sigma_{k}^{ii}$, we have
\begin{eqnarray}\label{e2.13}
&&\sigma_{k}^{ii}\phi_{ii}\\
&=&\frac{\sigma_{k}^{ii}}{P\log
P}\bigg[\sum_le^{\kappa_l}h_{llii}+\sum_le^{\kappa_l}h_{lli}^2+\sum_{p\neq
q}\frac{e^{\kappa_{p}}
-e^{\kappa_{q}}}{\kappa_{p}-\kappa_{q}}h_{pq i}^2-\Big(\frac{1}{P}+\frac{1}{P\log P}\Big)P_i^2\bigg]\nonumber\\
&&+\frac{\beta \sigma_{k}^{ii}\kappa_{i}\lt<\nu, E\rt>}{s-u}-\frac{\beta
\sigma_{k}^{ii}u_i^2}{(s-u)^2}-N\sigma_{k}^{ii}\kappa_{i}^2\lt<\nu, E\rt>+\sum_sNu_s\sigma_{k}^{ii}h_{iis}.\nonumber
\end{eqnarray}\par
At $P_0$, differentiating the equation (\ref{curvature}) twice yields,
\begin{align}\label{e2.14}
\sigma_{k}^{ii}h_{iil}=d_X\psi(\tau_l)+\kappa_ld_{\nu}\psi(\tau_l),
\end{align}
and
\begin{align}\label{e2.15}
\sigma_{k}^{ii}h_{iill}+\sigma_{k}^{pq,rs}h_{pql}h_{rsl}\geq
-C-Ch_{11}^2+\sum_sh_{sll}d_{\nu}\psi(\tau_s),
\end{align}
where $C$ is some uniform constant. Note that
\begin{eqnarray}\label{new216}
h_{llii}=h_{iill}-h_{ii}h_{ll}^2+h^2_{ii}h_{ll}.
\end{eqnarray}
Inserting  (\ref{e2.15}) and \eqref{new216} into (\ref{e2.13}), we obtain
\begin{eqnarray}
&&\sigma_{k}^{ii}\phi_{ii}\label{e2.16}\\
&\geq &\frac{1}{P\log P}\bigg[\sum_le^{\kappa_l}\Big(-C-C\kappa_{1}^2-\sigma_{k}^{pq,rs}h_{pql}h_{rsl}+\sum_sh_{sll}d_{\nu}\psi(\p_s)\Big)\nonumber\\
&&+\sum_l\sigma_{k}^{ii}e^{\kappa_l}h_{lli}^2+\s_k^{ii}\sum_{p\neq
q}\frac{e^{\kappa_{p}}
-e^{\kappa_{q}}}{\kappa_{p}-\kappa_{q}}h_{pq i}^2-\Big(\frac{1}{P}+\frac{1}{P\log P}\Big)\sigma_{k}^{ii}P_i^2\bigg]\nonumber\\
&&+\frac{\beta k\sigma_{k}\lt<\nu, E\rt>}{s-u}-\frac{\beta
\sigma_{k}^{ii}u_i^2}{(s-u)^2}-N\sigma_{k}^{ii}\kappa_{i}^2\lt<\nu, E\rt>+\sum_sNu_s\sigma_{k}^{ii}h_{sii}-\sigma_k^{ii}\kappa_i^2.\nonumber
\end{eqnarray}
From (\ref{e2.11}) and (\ref{e2.14}), we deduce
\begin{eqnarray}\label{e2.17}
\frac{1}{P\log P}\sum_j\sum_l
e^{\kappa_l}h_{jll}d_{\nu}\psi(\tau_j)+\sum_jNu_j\sigma_{k}^{ii}h_{sii}\geq\sum_ld_{\nu}\psi(\tau_l)\frac{\beta
u_l}{s-u}-C.\nonumber
\end{eqnarray}

For any constant $K>1$, denote
\begin{eqnarray}
&&A_i=e^{\kappa_i}\Big[K(\sigma_{k})_i^2-\sum_{p\neq
q}\sigma_{k}^{pp,qq}h_{ppi}h_{qqi}\Big], \ \  B_i=2\sum_{l\neq
i}\sigma_{k}^{ii,ll}e^{\kappa_l}h_{lli}^2, \nonumber \\
&&C_i=\sigma_{k}^{ii}\sum_le^{\kappa_l}h_{lli}^2, \  \
D_i=2\sum_{l\neq
i}\sigma_{k}^{ll}\frac{e^{\kappa_l}-e^{\kappa_i}}{\kappa_l-\kappa_i}h_{lli}^2,
\ \ E_i=\frac{1+\log P}{P\log P}\sigma_{k}^{ii}P_i^2\nonumber.
\end{eqnarray}
Note that
$$
-\sum_l\sigma_{k}^{pq,rs}h_{pql}h_{rsl}=\sum_{p\neq
q}\sigma_{k}^{pp,qq}h_{pql}^2-\sum_{p\neq
q}\sigma_{k}^{pp,qq}h_{ppl}h_{qql},
$$
Therefore, \eqref{e2.16} becomes
\begin{eqnarray}
&&\sigma_{k}^{ii}\phi_{ii}\label{e2.18}\\
&\geq &\frac{1}{P\log P}\dsum_i(A_i+B_i+C_i+D_i-E_i)\nonumber\\
&&+\frac{\beta k\sigma_{k}\lt<\nu, E\rt>}{s-u}-\frac{\beta
\sigma_{k}^{ii}u_i^2}{(s-u)^2}+(-N\lt<\nu, E\rt>-1)\sigma_{k}^{ii}\kappa_{i}^2+\sum_ld_{\nu}\psi(\tau_l)\frac{\beta
u_l}{s-u}-C\kappa_{1}.\nonumber
\end{eqnarray}

Following the same argument as the one in the proof of Lemma \ref{lem c2 global},
from \eqref{e2.18} we obtain,
\be\label{new51}
\begin{aligned}
0&\geq \sum_{|\kappa_i|<\delta \kappa_1}\frac{\alpha}{(P\log P)^2}\sigma_{k}^{ii}P_i^2\\
&+\frac{\beta k\sigma_{k}\lt<\nu, E\rt>}{s-u}-\frac{\beta\sigma_{k}^{ii}u_i^2}{(s-u)^2}+(-N\lt<\nu, E\rt>-1)\sigma_{k}^{ii}
\kappa_{i}^2+\sum_ld_{\nu}\psi(\tau_l)\frac{\beta
u_l}{s-u}-C\kappa_{1}.
\end{aligned}
\ee
Here, the constant $\delta$ is the same constant as the one chosen in Claim 1 of Lemma \ref{lem c2 global}.
Moreover, by (\ref{e2.11}), we have
\begin{align*}
-\frac{\beta \sigma_{k}^{ii}u_i^2}{(s-u)^2}\geq
-\frac{\sigma_{k}^{ii}}{\beta}\Big[2\Big(\frac{P_i}{P\log
P}\Big)^2+2N^2u_i^2\kappa_{i}^2\Big].
\end{align*}
Choose $\beta>0$ such that $\alpha\beta>2$, then \eqref{new51} implies
\begin{align}
0\geq &\frac{\beta k\sigma_{k}\lt<\nu, E\rt>}{s-u}-\sum_{|\kappa_i|\geq
\delta \kappa_1}\frac{\beta
\sigma_{k}^{ii}u_i^2}{(s-u)^2}\label{new52}\\
&+(-N\lt<\nu, E\rt>-1)\sigma_{k}^{ii}\kappa_{i}^2+\sum_ld_{\nu}\psi(\tau_l)\frac{\beta
u_l}{s-u}-C\kappa_{1}-\sum_{|\kappa_i|< \delta
\kappa_1}\frac{\sigma_{k}^{ii}}{\beta}2N^2u_i^2\kappa_{i}^2.\nonumber
\end{align}
Now, first choose $N>0$ such that $\frac{1}{2}\sum_{|\kappa_i|\geq \delta
\kappa_1}\sigma_{k}^{ii}\kappa_{i}^2(-N\langle\nu, E\rangle-1)-C\kappa_{1}\geq 0$, then choose
$\beta=\beta(N)$ sufficiently large such that $\sum_{|\kappa_i|< \delta
\kappa_1}\Big(\sigma_{k}^{ii}\kappa_{i}^2(-N\langle\nu, E\rangle-1)-\frac{\sigma_{k}^{ii}}{\beta}2N^2u_i^2\kappa_{i}^2\Big)\geq
0.$ We deduce
\begin{align}\label{e2.27}
\frac{\beta C}{s-u}+\sum_{|\kappa_i|\geq \delta \kappa_1}\frac{2\beta
\sigma_{k}^{ii}u_i^2}{(s-u)^2}\geq &\sum_{|\kappa_i|\geq \delta
\kappa_1}\sigma_{k}^{ii}\kappa_{i}^2(-N\langle\nu, E\rangle-1).
\end{align}
If $\frac{C}{s-u}\geq \sum_{|\kappa_i|\geq \delta
\kappa_1}\frac{2\beta \sigma_{k}^{ii}u_i^2}{(s-u)^2}$, we get
\begin{align*}
\frac{2C\beta}{s-u}\geq &\sigma_{k}^{11}\kappa_{1}^2(-N\lt<\nu, E\rt>-1)\geq c_0(N-1)\kappa_1,
\end{align*}
which implies the desired estimate.
If $\frac{C}{s-u}\leq \sum_{|\kappa_i|\geq \delta
\kappa_1}\frac{2\beta \sigma_{k}^{ii}u_i^2}{(s-u)^2}$, we let $i_0$ denote the index
of the maximum value element of the set
$$\Big\{\frac{2\beta
\sigma_{k}^{ii}u_i^2}{(s-u)^2}; |\kappa_i|\geq \delta
\kappa_1\Big\}.$$
Then, we obtain
\begin{align*}
4n\frac{\beta \sigma_{k}^{i_0i_0}u_{i_0}^2}{(s-u)^2}\geq
&\sigma_{k}^{i_0i_0}\kappa_{i_0}^2(-N\lt<\nu, E\rt>-1)\geq
C(N-1)\sigma_{k}^{i_0i_0}\delta^2\kappa_{1}^2,
\end{align*}
which also implies our desired estimate.
\end{proof}

\section{The prescribed curvature problem}
\label{prescribed curvature}
We will prove Theorem \ref{theo1} and \ref{theo2} in this section.

Let's consider the proof of Theorem \ref{theo1} first.
Recall that in Subsection \ref{Dirichlet sub1}, we have solved the approximate Dirichlet problem \eqref{D-approximate} on $B_r,$ for $r<1$. We will denote the strictly convex solution of \eqref{D-approximate} by $\urs$. We further denote the Legendre transform of $(B_r,\urs)$ to be $(\Omega_r, \ur)$, where
$\Omega_r=D\urs(B_r)$ is the domain of $\ur$. By Lemma 19 and 20 in \cite{WX} we have
\begin{eqnarray}\label{new61}
\lu\leq \ur\leq \uu,
\end{eqnarray}
in $\Omega_r$.

In the following, we will denote $\tilde{\Omega}_r=D\lu^*(B_r)$ to be the domain of $\lu_r:=\lu|_{\tilde{\Omega}_r}$. It is not difficult to see that these domains are increasing, namely,
$$\tilde{\Omega}_r\subset\tilde{\Omega}_s, \text{ for } r<s.$$ Moreover, by the choice of $\lu$ in Subsection \ref{Dirichlet sub1}, we have
$$\lu|_{\p\tilde{\Omega}_r}\rightarrow +\infty, \text{ as } r\rightarrow 1.$$
Thus, by the comparison principle, we have
\be\label{cv1.18}
\begin{aligned}
u_r|_{\T\Omega_r}&=[\xi\cdot D\us_r(\xi)-\us_r(\xi)]|_{\T B_r}\\
&\geq [\xi\cdot D\lu^*(\xi)-\lu^*(\xi)]|_{\T B_r}\\
&= \lu|_{\p \tilde{\Omega}_r}.
\end{aligned}
\ee
From this we can see that, as $r\rightarrow 1$, $u_r|_{\T\Omega_r}\rightarrow +\infty.$ This in turn implies,
for any compact set $\K\subset\R^n,$ there exists a constant $c_\K=c(\K)<1$ such that when $r>c_\K$, $\Omega_r\supset \K.$
Therefore,  for any compact set $\K\subset\R^n,$ we can apply Lemma \ref{lc1lem1} and Lemma \ref{lem-local c2} to obtain uniform $C^1$ and $C^2$ bounds for $\ur$ in $\K$.

More precisely, in order to obtain the local $C^1$ estimate, we introduce a new subsolution $\lu_1$ of \eqref{curvature}, where $\lu_1$ satisfies
$$\sigma_k(\kappa_1,\cdots,\kappa_n)=c_1+100,$$
and as $|x|\goto\infty$
$$\lu_1\goto |x|+\varphi\left(\frac{x}{|x|}\right).$$
By the strong maximum principle we have, when $x\in\R^n$
$$\lu_1(x)<\lu(x). $$
Thus, for any compact convex domain $\K$, let
$$2\delta=\min_{\K}(\lu-\lu_1).$$ We define a strict spacelike function $\Psi=\lu_1+\delta$. Denote $\K'=\{x\in\mathbb{R}^n; \Psi\leq \uu\}$. Since as $|x|\goto\infty,$ $\lu_1-\uu\goto 0$, we know that $\K'$ is a compact set only depending on $\K$. Applying Lemma \ref{lc1lem1}, for any $(\Omega_r, \ur)$, if $\K'\subset\Omega_r$, we have the gradient estimate:
\[\sup_{\K}\frac{1}{\sqrt{1-|D\ur|^2}}\leq\frac{1}{\delta}\sup\limits_{\K'}\frac{\bar{u}-\Psi}{\sqrt{1-|D\Psi|^2}}.\]

Next, we want to show that for any given compact set $\K\subset\R^n,$ $\{|D^2\ur|\}$ is uniformly bounded in $\K.$
Without loss of generality, let's consider any $B_R\subset\mathbb{R}^n$. Let $C_0=\max_{B_R}\uu$ and $s=2C_0+1$ in Lemma \ref{lem-local c2}. Denote
$U_s=\{x\in\mathbb{R}^n; \lu(x)<s\}$,
then by earlier discussion, it's easy to see that there exists $r_s>0$ such that when $r>r_s,$ $\Omega_r\supset U_s.$ Applying Lemma \ref{lem-local c2}
we obtain when $r>r_s$
$$\sup_{B_R}\kappa_{\max}(M_{\ur})\leq C.$$
Here $C$ depends on the upper bound of $\frac{1}{\sqrt{1-|D\ur|^2}}$ on $\bar{U}_s$, which is independent of $r.$
Using the classical regularity theorem and convergence theorem, we conclude that $(\Omega_r, \ur)$ converges locally smoothly to
an entire, smooth convex function $u$ satisfying \eqref{curvature}. In view of \eqref{new61} and the asymptotic behavior of $\lu,\uu$, we know that
as $|x|\goto\infty,$ $u\goto |x|+\varphi\left(\frac{x}{|x|}\right).$ Moreover, by Remark \ref{rema2} we also know that $u$ is strictly convex. Therefore, its
Gauss map image is $B_1,$ i.e., $Du(\mathbb{R}^n)=B_1$.

Theorem \ref{theo2} follows by replacing Lemma \ref{lc1lem1} and Lemma \ref{lem-local c2} in the proof of Theorem \ref{theo1} with Lemma \ref{lem-local c1} and Lemma \ref{lem-local c2*}.

\section{The radial downward translating soliton}
\label{rs}
In this section, we will study the radially symmetric downward translating soliton. Recall that we say $\M_u$ is a downward translating soliton when its  principal curvatures satisfy
\be\label{soliton*}
\s_k(\ka[\M_u])=\binom{n}{k}\lt(\C-\frac{1}{\sqrt{1-|Du|^2}}\rt)^k,
\ee
where $\C>1$ is a constant. We want to point out that in this section and the next section, $\C$ is the fixed constant in \eqref{soliton*}. We also denote
$$\td{\C}=\sqrt{1-\frac{1}{\C^2}}$$ as in Theorem \ref{theo3}.
The following theorem is a generalization of Theorem 1 in \cite{Ba2020}.
\begin{theo}\label{theo11}
Let $\C>1$ be a positive constant. Then there exits a strictly convex radial solution $u:\mathbb{R}^n\rightarrow \mathbb{R}$ of \eqref{soliton*}, satisfying
$$|Du|\rightarrow \td{\C}, \text{ as } |x|\rightarrow +\infty.$$
Moreover, as $|x|\goto\infty,$ $u(x)$ has the following asymptotic expansion
\be\label{asymptotic}
u(x)=\td{\C}|x|-\frac{1}{\C^2}\sqrt[k]{\frac{n-k}{n}}\log |x|+c_0+o(1)
\ee
for some constant $c_0\in\R.$ In particular, the radial solution $u$ is unique up to the addition of a constant.
\end{theo}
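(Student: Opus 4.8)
The plan is to reduce the PDE \eqref{soliton*} to an ODE via the radial ansatz and then perform a careful phase-plane/shooting analysis. Writing $u(x) = \phi(r)$ with $r = |x|$, the principal curvatures of $\M_u$ are $\kappa_{\rm rad} = \frac{\phi''}{(1-\phi'^2)^{3/2}}$ (with multiplicity one) and $\kappa_{\rm tan} = \frac{\phi'}{r\sqrt{1-\phi'^2}}$ (with multiplicity $n-1$), so that $\sigma_k(\kappa[\M_u])$ becomes an explicit expression in $\phi', \phi'', r$. Setting $p = \phi'$, equation \eqref{soliton*} becomes a first-order ODE for $p(r)$ of the form
\be\label{rs-ode}
\binom{n-1}{k-1}\frac{p'}{(1-p^2)^{3/2}}\lt(\frac{p}{r\sqrt{1-p^2}}\rt)^{k-1}+\binom{n-1}{k}\lt(\frac{p}{r\sqrt{1-p^2}}\rt)^{k}=\binom{n}{k}\lt(\C-\frac{1}{\sqrt{1-p^2}}\rt)^k.
\ee
First I would establish local existence near $r = 0$: strict convexity and smoothness at the origin force $p(0) = 0$ and $p'(0) = \kappa_0$ for some $\kappa_0 > 0$ with $\sigma_k$ of the isotropic curvature vector equalling the right-hand side, i.e. $\binom{n}{k}\kappa_0^k = \binom{n}{k}(\C-1)^k$, which pins down $\kappa_0 = \C - 1 > 0$ uniquely. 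A power-series or contraction-mapping argument gives a unique smooth convex solution on a maximal interval $[0, R)$.

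The heart of the argument is the global continuation and the asymptotics. I would show that along the solution $p$ is increasing and $p < \td\C$ for all $r$, so $p$ cannot blow up and $R = \infty$; the bound $p < \td\C$ comes from noting that the right-hand side of \eqref{rs-ode} degenerates precisely when $\frac{1}{\sqrt{1-p^2}} = \C$, i.e. $p = \td\C$, and a barrier/monotonicity argument keeps the trajectory below this level while forcing $p \to \td\C$. Once $p \to \td\C$ is known, I would extract the rate: writing $p = \td\C - \epsilon(r)$ and linearizing \eqref{rs-ode} for small $\epsilon$, the tangential term $\binom{n-1}{k}(\frac{\td\C}{r})^k$ must be balanced against $\binom{n}{k}(\C - \frac{1}{\sqrt{1-p^2}})^k$; since $\C - \frac{1}{\sqrt{1-p^2}} \approx \C^3 \td\C\, \epsilon / 1$ to leading order (computing $\frac{d}{dp}\frac{1}{\sqrt{1-p^2}}\big|_{\td\C} = \td\C\C^3$), one gets $\epsilon(r) \sim \frac{1}{\C^2 r}\sqrt[k]{\frac{n-k}{n}}$ after simplification. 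Integrating $\phi' = p = \td\C - \epsilon(r)$ then yields
\be
\phi(r) = \td\C\, r - \frac{1}{\C^2}\sqrt[k]{\frac{n-k}{n}}\log r + c_0 + o(1),
\ee
which is precisely \eqref{asymptotic}; the $o(1)$ term requires showing the next-order correction to $\epsilon$ is integrable at infinity, which follows from a second-order expansion of the ODE.

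For uniqueness up to a constant: any radial solution determines $p = \phi'$ solving the same first-order ODE \eqref{rs-ode}, and the initial condition $p(0) = 0$, $p'(0) = \C - 1$ is forced by smoothness and the equation at the origin; ODE uniqueness then gives a unique $p$, hence $\phi$ unique up to the additive constant $c_0$. Alternatively this also follows from the maximum principle as in Remark \ref{uniqueness rmk}. The main obstacle I anticipate is the continuation step combined with the precise asymptotic rate: one must rule out the trajectory reaching $p = \td\C$ in finite $r$ (where the equation degenerates) and simultaneously show it does approach $\td\C$ as $r \to \infty$ with exactly the claimed $1/r$ rate and no spurious oscillation — this is a delicate ODE analysis requiring good barriers on both sides, and getting the constant $\frac{1}{\C^2}\sqrt[k]{\frac{n-k}{n}}$ right demands careful bookkeeping of the linearization. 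The boundedness of principal curvatures claimed in Theorem \ref{theo3} for the non-radial case will ultimately use this radial solution as a barrier, but for Theorem \ref{theo11} itself it suffices to note $\kappa_{\rm tan} = \frac{p}{r\sqrt{1-p^2}} \to 0$ and $\kappa_{\rm rad} = \frac{p'}{(1-p^2)^{3/2}}$ stays bounded since $p' \to 0$ at rate $1/r^2$.
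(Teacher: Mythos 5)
Your proposal follows essentially the same route as the paper: reduce to the first-order radial ODE, pin down $p(0)=0$, $p'(0)=\C-1$ from smoothness at the origin, continue globally keeping $0<p<\td\C$, extract the rate from a leading-order balance of the tangential term against the right-hand side, and integrate to get the $\log$ correction. The paper makes your heuristic balance rigorous by substituting $y=\td\C-z/r$, which converts the equation into $z'=-B(r)z^k+C(r)$ with $B(r),C(r)$ converging at infinity; the limit $z\to\frac{1}{\C^2}\sqrt[k]{(n-k)/n}$ then follows from a dedicated ODE lemma (Lemma~\ref{A.2}, generalizing Proposition~A.2 of Bayard), and the $o(1)$ term in \eqref{asymptotic} comes from a further substitution $z=\mathrm{const}+w/r$ showing $w$ is bounded. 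The gap you flagged --- ruling out oscillation and establishing the precise $1/r$ rate rather than just the formal balance --- is exactly what that lemma supplies, so your outline is sound but would need its content filled in to be complete.
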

For radial solutions, we will reduce the equation \eqref{soliton*} to an ODE.
Let $u=u(r)$ and $y=\frac{\p u}{\p r}$, then a straightforward calculation yields,
$$D_iu=y\frac{x_i}{|x|}, D^2_{ij}u=\frac{y}{|x|}\left(\delta_{ij}-\frac{x_ix_j}{|x|^2}\right)+y'\frac{x_ix_j}{|x|^2}.$$
Therefore,
$$\ka[\M_u]=\frac{1}{\sqrt{1-y^2}}\left(\frac{y'}{1-y^2},\frac{y}{r},\cdots,\frac{y}{r}\right),$$
and \eqref{soliton*} becomes
\begin{eqnarray}\label{71}
\frac{1}{(1-y^2)^{k/2}}\frac{y^{k-1}}{r^{k-1}}\left(\frac{k}{n}\frac{y'}{1-y^2}+\frac{n-k}{n}\frac{y}{r}\right)=\left(\C-\frac{1}{\sqrt{1-y^2}}\right)^k.
\end{eqnarray}
By a small modification of the proof of Proposition 2.1 in \cite{Ba2020}, we obtain
\begin{prop}\label{prop12}
Under the hypotheses of Theorem \ref{theo11}, there exists a solution $y$ of \eqref{71}, which is defined on $[0,+\infty)$ and smooth on $(0,+\infty),$ such that
$$y(0)=0, 0\leq y <\td{\C} $$
$$\lim_{r\rightarrow +\infty}y(r)=\td{\C}, y'(0)=\C-1, \,\,\text{and}\,\, y'>0 \,\,\text{on}\,\, [0, +\infty).$$
Moreover, as $r\goto0+,$ we have
$$\ka[\M_u(r)]\goto(\C-1)(1,1,\cdots,1).$$
\end{prop}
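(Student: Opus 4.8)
The statement is an existence/uniqueness result for the reduced ODE \eqref{71}, so the natural approach is a standard ODE shooting/continuation argument combined with a careful analysis of the asymptotics as $r\to 0^+$ and $r\to\infty$. First I would rewrite \eqref{71} in the form of an explicit first‑order ODE for $y$: solving for $y'$ gives
\[
y'=\frac{1-y^2}{k}\left[\,n\,r^{k-1}\frac{(1-y^2)^{k/2}}{y^{k-1}}\left(\C-\frac{1}{\sqrt{1-y^2}}\right)^k-(n-k)\frac{y}{r}\,\right],
\]
which is singular at $r=0$ and at $y=0$. So the main preliminary issue is to produce a solution emanating from the singular initial point $(r,y)=(0,0)$ with the prescribed behavior $y(0)=0$, $y'(0)=\C-1$. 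I would do this by the usual device of making an ansatz $y(r)=(\C-1)r+O(r^2)$ near the origin, plugging into the ODE to check consistency of the leading term (the $r^{k-1}$ and $y^{k-1}\sim r^{k-1}$ factors cancel, and matching the $O(1)$ terms forces $y'(0)=\C-1$, consistent with $\ka[\M_u(r)]\to(\C-1)(1,\dots,1)$ since both $y'/(1-y^2)$ and $y/r$ tend to $\C-1$), then invoking a fixed‑point argument (contraction mapping on $C^0([0,\varepsilon])$ for the integrated equation, after desingularizing by the substitution $y=rz$) to get a unique local smooth solution. This is exactly where Bayard's Proposition 2.1 in \cite{Ba2020} (the $k=2$ case) is used, and I would follow that proof with only the bookkeeping changes needed for general $k$.

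Next I would establish the global bounds and monotonicity on the maximal interval of existence $[0,R)$. The key claims are: (i) $0\le y<\td\C$ is preserved, and (ii) $y'>0$. For (i), note that when $y=\td\C$ the forcing term $\C-1/\sqrt{1-y^2}=\C-1/\sqrt{1-\td\C^2}=\C-\C=0$ vanishes, so $y=\td\C$ is (formally) a barrier: if $y$ ever reached $\td\C$ the right‑hand side of the $\s_k$ equation would be zero, contradicting $y^{k-1}>0$ and $y'>0$; more carefully one shows $y<\td\C$ cannot break down by a maximum‑principle/comparison argument on the ODE. For (ii), from the equation $y'>0$ is equivalent to
\[
\frac{k}{n}\frac{y'}{1-y^2}=\left(\C-\frac{1}{\sqrt{1-y^2}}\right)^k\frac{(1-y^2)^{k/2}r^{k-1}}{y^{k-1}}-\frac{n-k}{n}\frac{y}{r}>0,
\]
so $y'>0$ reduces to an algebraic inequality in $(r,y)$ that I would verify holds throughout the region $\{0<y<\td\C\}$ using the bound on $y$ — essentially showing the "radial curvature" $y'/(1-y^2)$ stays positive because the prescribed $\s_k$ is positive and the tangential curvatures $y/r$ alone do not exhaust it. Once $y$ is bounded and monotone on $[0,R)$ with $y$ staying strictly between $0$ and $\td\C$, standard ODE theory forces $R=+\infty$ (no blow‑up of $y$ or $y'$ in finite $r$, using the bound on $y$ to bound $y'$ via the displayed formula, noting $r$ bounded away from $0$ on compact subintervals).

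Finally I would pin down the limit $\lim_{r\to\infty}y(r)=\td\C$. Since $y$ is increasing and bounded above by $\td\C$, it has a limit $\ell\le\td\C$; I must rule out $\ell<\td\C$. If $\ell<\td\C$ then the forcing term $\C-1/\sqrt{1-\ell^2}$ is a strictly positive constant $c_*>0$, so for large $r$ the $\s_k$ equation reads roughly $\frac{y^{k-1}}{(1-y^2)^{k/2}r^{k-1}}(\text{something bounded})\approx c_*^k$, which forces $\frac{k}{n}\frac{y'}{1-y^2}+\frac{n-k}{n}\frac{y}{r}\approx c_*^k(1-y^2)^{k/2}r^{k-1}/y^{k-1}\to\infty$ like $r^{k-1}$; integrating $y'\gtrsim r^{k-1}$ against the constraint $y<\td\C$ gives a contradiction for $k\ge 1$ (for $k=1$ one gets $y'$ bounded below by a positive constant, still contradicting boundedness). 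Hence $\ell=\td\C$. The statement $\ka[\M_u(r)]\to(\C-1)(1,\dots,1)$ as $r\to 0^+$ is then immediate from the local analysis: $y/r\to\C-1$ and $y'/(1-y^2)\to y'(0)=\C-1$. Uniqueness of $y$ (hence of $u$ up to an additive constant, as claimed in Theorem \ref{theo11}) follows from the uniqueness in the local fixed‑point construction together with uniqueness for the regular ODE on $(0,\infty)$.

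The step I expect to be the genuine obstacle is the desingularization at $r=0$: producing a solution through the singular point with the correct one‑term expansion and verifying it is smooth (not merely $C^1$) on $[0,\infty)$, together with confirming $y'(0)=\C-1$ rather than some spurious root. This is delicate because the ODE is singular in both variables simultaneously there, and the matching must be done to enough orders to apply the contraction argument; all the later steps (global bounds, monotonicity, the limit at infinity) are comparatively routine maximum‑principle and integration arguments once the starting solution is in hand. I would lean on the already‑cited Proposition 2.1 of \cite{Ba2020} to handle this, indicating precisely which exponents change when $k=2$ is replaced by general $k$.
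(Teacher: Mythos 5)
Your proposal follows essentially the same approach as the paper, which declines to prove Proposition~\ref{prop12} and says only that it is ``a small modification of the proof of Proposition 2.1 in \cite{Ba2020}.'' Your detailed plan---desingularizing the ODE at the singular point $(r,y)=(0,0)$, verifying the consistency $y'(0)=\C-1$ by matching leading terms, a contraction/fixed-point argument for local existence, barrier and monotonicity arguments for the invariant region $0\le y<\td{\C}$, and the boundedness contradiction to force $y\to\td{\C}$---is precisely the kind of bookkeeping generalization of Bayard's $k=2$ argument the authors have in mind, and each of your technical checks (the cancellation of $r^{k-1}/y^{k-1}$ at the origin, the vanishing of the forcing term at $y=\td{\C}$, the $r^{k-1}$ growth contradiction at infinity) is correct.
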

Since the proof is a small modification of the proof of Proposition 2.1 in \cite{Ba2020}, we skip it here. Now, let's study the asymptotic behavior of $y.$
\begin{prop}\label{prop13}
Let $y$ be the solution of \eqref{71}. Then as $r\goto\infty,$ $y$ has the following asymptotic expansion
$$y(r)=\td{\C}-\frac{1}{\C^2}\sqrt[k]{\frac{n-k}{n}}\frac{1}{r}+O\left(\frac{1}{r^2}\right).$$
\end{prop}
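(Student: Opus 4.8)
The plan is to analyze the ODE \eqref{71} near $r=+\infty$, where by Proposition \ref{prop12} we already know $y(r)\to\td{\C}$ and $y'>0$. The strategy is to write $y(r)=\td{\C}-z(r)$ with $z(r)\to0+$, substitute into \eqref{71}, and extract the leading-order balance. Since $1-\td{\C}^2=1/\C^2$, we have $1-y^2 = 1/\C^2 + 2\td{\C}z - z^2$, so $\sqrt{1-y^2}\to 1/\C$ and $\C - (1-y^2)^{-1/2}\to 0$. Expanding, $(1-y^2)^{-1/2} = \C(1 - \C^2\td{\C}z + O(z^2)) = \C - \C^3\td{\C}z + O(z^2)$, hence $\C - (1-y^2)^{-1/2} = \C^3\td{\C}z + O(z^2)$, so the right-hand side of \eqref{71} is $(\C^3\td{\C})^k z^k (1 + O(z))$. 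On the left-hand side, $y^{k-1}/r^{k-1}\to \td{\C}^{k-1}/r^{k-1}$, the factor $(1-y^2)^{-k/2}\to\C^k$, the term $\frac{k}{n}\frac{y'}{1-y^2} = \frac{k}{n}\frac{-z'}{1/\C^2+O(z)}$, and $\frac{n-k}{n}\frac{y}{r}\to\frac{n-k}{n}\frac{\td{\C}}{r}$.

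First I would make these substitutions rigorous by introducing the new variable $t = 1/r$ (or equivalently working directly with $r$) and showing that to leading order the equation forces $z(r)\sim \frac{c}{r}$ for a specific constant $c$, then bootstrapping to get the $O(1/r^2)$ error. Concretely, multiplying \eqref{71} through and keeping only leading terms: the dominant balance when $z\to 0$ and $r\to\infty$ should come from matching $\frac{1}{r^{k-1}}\cdot\frac{n-k}{n}\cdot\frac{\td{\C}}{r}$ against a constant multiple of $z^k$ — but one must check whether the $z'$ term contributes at leading order. If $z\sim c/r$, then $z'\sim -c/r^2$, so $\frac{k}{n}\frac{y'}{1-y^2}\sim \frac{k}{n}\C^2\frac{c}{r^2}$, which is of the same order $1/r^2$ as $\frac{n-k}{n}\frac{\td\C}{r}\cdot\frac{1}{r}$... wait, more carefully: the whole left side has the prefactor $\frac{y^{k-1}}{r^{k-1}(1-y^2)^{k/2}}$, and inside the parenthesis both terms are $O(1/r)$ for the $y/r$ piece and $O(z') = O(1/r^2)$ for the $y'$ piece, so actually the $y/r$ term dominates inside the bracket. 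Thus the leading balance is
\[
\frac{\td{\C}^{k-1}}{r^{k-1}}\cdot\C^k\cdot\frac{n-k}{n}\cdot\frac{\td{\C}}{r} \approx (\C^3\td{\C})^k z^k,
\]
giving $z^k \approx \frac{\td{\C}^k \C^k}{(\C^3\td{\C})^k}\cdot\frac{n-k}{n}\cdot\frac{1}{r^k} = \frac{1}{\C^{2k}}\cdot\frac{n-k}{n}\cdot\frac{1}{r^k}$, so $z \approx \frac{1}{\C^2}\sqrt[k]{\frac{n-k}{n}}\cdot\frac{1}{r}$, which is exactly the claimed coefficient.

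To upgrade this heuristic to a rigorous asymptotic expansion with $O(1/r^2)$ control, I would set $z(r) = \frac{1}{\C^2}\sqrt[k]{\frac{n-k}{n}}\frac{1}{r} + w(r)$ and derive a first-order ODE for $w$; substituting and using the known leading balance, the equation for $w$ becomes (after linearization) of the form $w' = -\frac{a}{r}w + O(1/r^2)$ for some constant $a>0$ determined by differentiating the $z^k$ and $y/r$ terms, plus a forcing term of size $O(1/r^2)$ coming from the next-order corrections (including the contribution of the $y'$ term $\frac{k}{n}\frac{y'}{1-y^2}$, which is genuinely $O(1/r^2)$, and the quadratic corrections in $z$). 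A standard integrating-factor / Gronwall argument on $[R_0,\infty)$ then yields $w(r) = O(1/r^2)$, provided $a > -1$ so that the homogeneous solution $r^{-a}$ decays faster than $1/r^2$ or at least does not dominate; if $a$ turns out to make $r^{-a}$ decay slower than $r^{-2}$ one still gets $w = O(r^{-\min(2,a)})$, but a direct check of the value of $a$ should confirm the $O(1/r^2)$ rate (alternatively, iterate the bootstrap once more). The main obstacle is precisely this bookkeeping of the next-to-leading order terms — keeping careful track of which contributions are genuinely $O(1/r^2)$ versus lower order, handling the $y'$ term correctly, and ensuring the linearized coefficient $a$ has the right sign for the Gronwall estimate to close; the algebra is routine but error-prone, so I would organize it by first proving $z = O(1/r)$ (easy, from the leading balance and monotonicity), then $z = \frac{1}{\C^2}\sqrt[k]{\frac{n-k}{n}}\frac{1}{r}(1+o(1))$, and finally the sharp $O(1/r^2)$ remainder.
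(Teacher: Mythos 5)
Your proposal follows essentially the same route as the paper: substitute $y=\td{\C}-z$ (the paper uses the rescaled variable $y=\td{\C}-z/r$, which is equivalent up to replacing your $z$ by $rz$), extract the leading-order balance to find $z\sim\frac{1}{\C^2}\sqrt[k]{\frac{n-k}{n}}\frac{1}{r}$, then set up an ODE for the remainder $w$ and close with a comparison/Gronwall argument. Two corrections on the details. First, your linearized damping is stronger than you expect: writing $z=\frac{c}{r}+w$ with $c=\frac{1}{\C^2}\sqrt[k]{\frac{n-k}{n}}$, the linearization of the $z^k$ nonlinearity produces a coefficient that tends to a \emph{negative constant}, namely $-kB_0c^{k-1}$ with $B_0=\frac{n}{k}\C^{2k-2}\td{\C}$, not $-a/r$; so the homogeneous solution decays exponentially and your worry about whether ``$a$ has the right sign'' never arises. (In the paper's rescaled variables this appears as $w'=-D(r)w+F(r)$ with $D(r)\to D_0>0$, and the $O(1/r^2)$ rate comes from the size of the forcing $F(r)$, not from a power-law decay of the homogeneous part.) Second, you call the preliminary step ``$z=O(1/r)$'' easy from leading balance and monotonicity, but this is precisely where the paper spends real effort: it needs its Lemma \ref{A.2} (a generalization of Proposition A.2 of Bayard), whose proof is a careful ODE comparison argument, to show that the rescaled $rz$ actually converges to $c$ rather than oscillating or converging slowly; monotonicity alone does not give a rate. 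The remaining genuine work, which you correctly flag, is the bookkeeping in (75)--(76) of the paper needed to verify that the coefficients $B(r),C(r)$ approach their limits at rate $O(1/r)$ (equivalently, that $r\,(y-A(r)/\C)$ stays bounded), which is what controls the forcing term and yields the sharp $O(1/r^2)$ remainder.
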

\begin{proof}
By Proposition \ref{prop12} we may assume
\be\label{define z}
y(r)=\td{\C}-\frac{z}{r}.
\ee Then we have,
\begin{eqnarray}\label{72}
\sqrt{1-y^2}-\frac{1}{\C}=\frac{1-\frac{1}{\C^2}-y^2}{\sqrt{1-y^2}+\frac{1}{\C}}=\frac{z}{r}A(r),
\text{ where }  A(r)=\frac{\sqrt{1-\frac{1}{\C^2}}+y}{\sqrt{1-y^2}+\frac{1}{\C}}.
\end{eqnarray}
Differentiating \eqref{define z}, then substituting it into \eqref{71}, we get
\be\label{rs1}
\frac{k}{n}\frac{y^{k-1}}{1-y^2}\left(-\frac{z'}{r^k}+\frac{z}{r^{k+1}}\right)+\frac{n-k}{n}\frac{y^k}{r^k}=\C^k\left(\sqrt{1-y^2}-\frac{1}{\C}\right)^k.
\ee
By \eqref{72}, \eqref{rs1} can be simplified as
$$\frac{k}{n}\frac{y^{k-1}}{1-y^2}\left(-z'+\frac{z}{r}\right)+\frac{n-k}{n}y^k=\C^kz^kA^k(r).$$
Thus, we obtain
\begin{eqnarray}\label{73}
z'=-B(r)z^k+C(r),
\end{eqnarray}
where
\be\label{BC}
B(r)=\C^k\frac{n}{k}\frac{1-y^2}{y^{k-1}}A^k(r)\,\,\text{and}\,\,  C(r)=\frac{z}{r}+\frac{n-k}{k}y(1-y^2).
\ee
Applying Proposition \ref{prop12} we can see that
$$\lim_{r\rightarrow +\infty}B(r)=\frac{n}{k}\C^{2k-2}\td{\C}\,\,\text{and}\,\,
\lim_{r\rightarrow +\infty}C(r)=\frac{n-k}{k}\frac{1}{\C^2}\td{\C}.$$
Here, we have used $\lim\limits_{r\goto\infty}\frac{z}{r}=0,$ which is a direct consequence of Proposition \ref{prop12}.
Next Lemma is a generalization of Proposition A.2 in \cite{Ba2020}.
\begin{lemm}
\label{A.2}
Assume $z:(0,+\infty)\rightarrow \mathbb{R}$ is a positive solution of the equation
$$z'=-A(r)z^k+B(r),$$ where $A,B: (0, \infty)\goto\R$ are continuous functions such that
$$\lim_{r\rightarrow+\infty}A(r)=A_0>0, \lim_{r\rightarrow+\infty}B(r)=B_0>0.$$ Then
$$\lim_{r\rightarrow+\infty}z(r)=\sqrt[k]{\frac{B_0}{A_0}}.$$
\end{lemm}
\begin{proof}
In order to prove this Lemma, we only need to prove
\begin{claim}
Assume $z:(0,+\infty)\rightarrow \mathbb{R}$ is a positive solution of the equation
\[z'=A_0z^k+B_0,\]
with $A_0<0, $ $B_0>0$ being constants. Then
\[\lim\limits_{r\goto\infty}z(r)=\lt(-\frac{B_0}{A_0}\rt)^{1/k}.\]
\end{claim}
If this claim is true, following the same argument as Proposition A.2 in \cite{Ba2020}, we can prove Lemma \ref{A.2}.
We will prove this claim below.

Without loss of generality, let's consider the positive solution of equaiton
\be\label{rs2}
z'=B-z^k
\ee
instead. We will show that
\be\label{rs3}
\lim\limits_{r\goto\infty}z(r)=B^{1/k}.
\ee
First, since $z$ is a positive solution of \eqref{rs2}, let's assume $0<z(r_0)=z_0<B^{1/k}$ then we have $z_0< z(r)<B^{1/k}$ on $(r_0, \infty).$
Denote $z_1=B^{1/k}$ we get
\[z^k-B=(z-z_1)(z^{k-1}+z^{k-2}z_1+\cdots+z_1^{k-1}).\]
Therefore \eqref{rs2} can be written as
\be\label{rs4}
-dr=\lt[\frac{A_1}{z-z_1}+\frac{Q_{k-2}(z)}{z^{k-1}+z^{k-2}z_1+\cdots+z_1^{k-1}}\rt]dz,
\ee
where $A_1=\frac{1}{k}z_1^{1-k}$ and $Q_{k-2}(z)$ is a polynomial of degree $k-2.$ It's easy to see that
$Q_{k-2}(z)=-A_1z^{k-2}+Q(k-3)(z)$ and $Q_{k-3}(z)$ is a polynomial of degree $k-3.$
Integrating \eqref{rs4} from $r_0$ to $r$ yields
\begin{eqnarray}\label{rs5}
-r+r_0&=&A_1\ln\lt|\frac{z(r)-z_1}{z_0-z_1}\rt|-\int_{z_0}^{z(r)}\frac{A_1z^{k-2}}{z^{k-1}+z^{k-2}z_1+\cdots+z_1^{k-1}}dz\\
&&+\int_{z_0}^{z(r)}\frac{Q_{k-3}(z)}{z^{k-1}+z^{k-2}z_1+\cdots+z_1^{k-1}}dz\nonumber.
\end{eqnarray}
Notice that as $r\goto\infty$ the left hand side of \eqref{rs5} goes to $-\infty,$ while
\[-\int_{z_0}^{z(r)}\frac{A_1z^{k-2}}{z^{k-1}+z^{k-2}z_1+\cdots+z_1^{k-1}}dz\geq-A_1\ln\lt|\frac{z_1}{z_0}\rt|,\]
and
\[\lt|\int_{z_0}^{z(r)}\frac{Q_{k-3}(z)}{z^{k-1}+z^{k-2}z_1+\cdots+z_1^{k-1}}dz\rt|\]
is bounded. Therefore, $\lim\limits_{r\goto\infty}z(r)=z_1=B^{1/k}.$ Similarly, we can prove the case when $z(r_0)=z_0>z_1.$
\end{proof}

From Lemma \ref{A.2} and equation \eqref{73}, we conclude
$$\lim_{r\rightarrow +\infty}z(r)=\frac{1}{\C^2}\sqrt[k]{\frac{n-k}{n}}.$$
We further assume
$$z(r)=\frac{1}{\C^2}\sqrt[k]{\frac{n-k}{n}}+\frac{w(r)}{r}.$$
Inserting it into \eqref{73}, we get
$$w'=-D(r)w+F(r),$$ where $$D(r)=B(r)\sum_{i=1}^k\lt(\begin{matrix}k\\ i\end{matrix}\rt)\left(\frac{1}{\C^2}\sqrt[k]{\frac{n-k}{n}}\right)^{k-i}\left(\frac{w}{r}\right)^{i-1},\ \
F(r)=r\left(C(r)-\frac{B(r)}{\C^{2k}}\frac{n-k}{n}\right)+\frac{w}{r}.$$
Notice that $\lim\limits_{r\rightarrow +\infty}\frac{w}{r}=0$ and $D(r)$ has a uniform positive lower bound. In the following,
we want to find a positive upper bound for $F(r)$. Using the expressions \eqref{BC} of $B(r),C(r)$, we obtain
\begin{eqnarray}
F(r)&=&\frac{w}{r}+z+\frac{n-k}{k}\frac{1-y^2}{y^{k-1}}r\left[y^k-\left(\frac{A(r)}{\C}\right)^k\right]\\
&=&\frac{w}{r}+z+\frac{n-k}{k}\frac{1-y^2}{y^{k-1}}r\left(y-\frac{A(r)}{\C}\right)\sum_{i=1}^{k}y^{k-i}\left(\frac{A(r)}{\C}\right)^{i-1}\nonumber.
\end{eqnarray}
Therefore, we only need to show $r(y-A(r)/\C)$ is bounded as $r\goto\infty.$ By \eqref{72}, we have
\be\label{75}
r\left(y-\frac{A(r)}{\C}\right)=r\left(y-\frac{1}{\C}\frac{\sqrt{1-\frac{1}{\C^2}}+y}{\sqrt{1-y^2}+\frac{1}{\C}}\right)
=\frac{r\left(y\sqrt{1-y^2}-\frac{1}{\C}\sqrt{1-\frac{1}{\C^2}}\right)}{\sqrt{1-y^2}+\frac{1}{\C}}.
\ee
Combining \eqref {75} with the expression of $y$ and \eqref{72}, we can derive
\be\label{76}
\begin{aligned}
y\sqrt{1-y^2}-\frac{1}{\C}\sqrt{1-\frac{1}{\C^2}}&=\left(\sqrt{1-\frac{1}{\C^2}}-\frac{z}{r}\right)\left(\frac{1}{\C}+\frac{zA(r)}{r}\right)
-\frac{1}{\C}\sqrt{1-\frac{1}{\C^2}}\\
&=\frac{z}{r}\left(-\frac{1}{\C}+A(r)\sqrt{1-\frac{1}{\C^2}}\right)-\frac{z^2A(r)}{r^2}.
\end{aligned}
\ee
 From \eqref{75}, \eqref{76}, and Lemma \ref{A.2} we conclude that $r(y-A(r)/\C)$ is uniformly bounded from above. Thus, $F(r)$ has an uniform upper bound. Applying Proposition A.3 in \cite{Ba2020}, we obtain a uniform upper bound for $w.$ This completes the proof.
\end{proof}
It's not hard to see that Theorem \ref{theo11} follows from Proposition \ref{prop12} and Proposition \ref{prop13}.

\section{The existence results}
\label{existence for soliton}
In this section we will prove Theorem \ref{theo3}. First, we want to prove the following existence Theorem.
\begin{prop}
\label{es-th1}
Suppose $\varphi$ is a $C^2$ function defined on $\mathbb{S}^{n-1}_{\td{\C}}:=\{x\in\R^n| |x|=\td{\C}\},$ where $\td{\C}=\sqrt{1-\lt(\frac{1}{\C}\rt)^2}.$
There exists a unique, strictly convex solution $u:\mathbb{R}^n\rightarrow\mathbb{R}$ of \eqref{soliton} such that as $|x|\goto\infty,$
\begin{eqnarray}\label{asymptotic behavior}
u(x)\goto\tilde{\C}|x|-\frac{1}{\C^2}\sqrt[k]{\frac{n-k}{n}}\log|x|+\varphi\left(\tilde{\C}\frac{x}{|x|}\right).
\end{eqnarray}
\end{prop}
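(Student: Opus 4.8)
The strategy is an exhaustion by Dirichlet problems on balls, with every barrier built from the radially symmetric soliton of Theorem \ref{theo11}, followed by a separate barrier argument at infinity to pin down the expansion \eqref{asymptotic behavior}. Normalize the radial solution $\ur$ of \eqref{soliton*} (by adding a constant, which again yields a solution) so that the constant $c_0$ in \eqref{asymptotic} vanishes, write $m^-=\min_{\mathbb{S}^{n-1}_{\td{\C}}}\varphi$, $m^+=\max_{\mathbb{S}^{n-1}_{\td{\C}}}\varphi$, and set $\Phi(x):=\varphi\!\lt(\td{\C}\tfrac{x}{|x|}\rt)$, a $0$-homogeneous $C^2$ function on $\R^n\setminus\{0\}$. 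For each large $R$ I would solve
\[
\lt(\frac{\s_k}{\binom{n}{k}}\rt)^{1/k}\!(\ka[\M_u])=\C-\frac{1}{\sqrt{1-|Du|^2}}\ \ \text{in }B_R,\qquad u=\ur+\Phi\ \ \text{on }\partial B_R,
\]
for a strictly convex solution $u_R$ (solvable by the methods of Section \ref{Dirichlet}; cf.\ the case $k=2$ treated in \cite{Ba2020}). Since $\ur+\mathrm{const}$ is an exact solution of \eqref{soliton*} and $\ur+m^-\le\ur+\Phi\le\ur+m^+$ on $\partial B_R$, the comparison principle (Remark \ref{uniqueness rmk}) gives $\ur+m^-\le u_R\le\ur+m^+$ in $B_R$.

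The heart of the proof is uniform local estimates. Strict convexity of $u_R$ forces $\s_k(\ka[\M_{u_R}])>0$, hence $\C-1/\sqrt{1-|Du_R|^2}>0$ and $|Du_R|<\td{\C}$ pointwise; to make this uniform on a fixed compact $K$ independently of $R$, I would apply the Bayard-Schn\"urer estimate, Lemma \ref{lc1lem1}, with $\bar u=\ur+m^+$ and $\Psi$ a downward hyperboloid cap — a strict subsolution, having large constant curvature — positioned so that $\Psi>\bar u$ near $\partial B_R$ while $\Psi$ lies well below $u_R$ on $K$, obtaining $|Du_R|\le\td{\C}-\eta_K$ on $K$. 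On $K$ the right-hand side of the equation is then bounded below by a positive constant, so the equation is uniformly elliptic there and a Pogorelov-type interior estimate — the analogue of Lemma \ref{lem-local c2}, via the same test function — yields $|D^2u_R|\le C_K$ with $C_K$ independent of $R$. Letting $R\to\infty$ along a subsequence and invoking Evans-Krylov and Schauder theory, the $u_R$ converge in $C^\infty_{\mathrm{loc}}$ to an entire convex solution $u$ of \eqref{soliton} with $\ur+m^-\le u\le\ur+m^+$; strict convexity of $u$ follows from the Constant Rank and Splitting Theorems exactly as in Remark \ref{rema3}, a splitting $\M_u=\M^l\times\R^{n-l}$ being incompatible with \eqref{asymptotic behavior}.

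It remains to improve $m^-\le u-\ur\le m^+$ to $u-\ur\to\Phi$ as $|x|\to\infty$; together with \eqref{asymptotic} this is exactly \eqref{asymptotic behavior}. The key point is that $\ur+\Phi$ is an approximate solution near infinity: as $\Phi$ is $0$-homogeneous, $x\cdot D\Phi\equiv0$, so $D\Phi$ is orthogonal to the radial field $D\ur$, whence $|D(\ur+\Phi)|^2=|D\ur|^2+|D\Phi|^2$ differs from $|D\ur|^2$ by $O(|x|^{-2})$; and $D^2\Phi=O(|x|^{-2})$ only perturbs, to leading order, the smallest eigenvalue of $D^2\ur$ — which is itself of size $O(|x|^{-2})$ — the other $n-1$ eigenvalues remaining of size $|x|^{-1}$. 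Using the expansion of Proposition \ref{prop13} one then gets
\[
\lt(\frac{\s_k}{\binom{n}{k}}\rt)^{1/k}\!(\ka[\M_{\ur+\Phi}])-\Big(\C-\tfrac{1}{\sqrt{1-|D(\ur+\Phi)|^2}}\Big)=O(|x|^{-2}).
\]
For each $\varepsilon>0$ I would add a radial correction $g_\varepsilon(|x|)\to0$ with $g_\varepsilon',g_\varepsilon''<0$, whose curvature and gradient contributions dominate this $O(|x|^{-2})$ defect, so that $w^+_\varepsilon:=\ur+\Phi+\varepsilon+g_\varepsilon$ is a genuine supersolution on $\{|x|\ge R_\varepsilon\}$ lying above $u$ on $\{|x|=R_\varepsilon\}$; comparison on the annuli $\{R_\varepsilon\le|x|\le R'\}$, then $R'\to\infty$ and $\varepsilon\to0$, gives $\limsup_{|x|\to\infty}(u-\ur-\Phi)\le0$, and the mirror construction $w^-_\varepsilon:=\ur+\Phi-\varepsilon-g_\varepsilon$ gives the reverse inequality. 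Uniqueness is then immediate: the difference $w$ of two solutions with this asymptotics satisfies a linear elliptic equation with no zeroth-order term and $w\to0$ at infinity, so $\max_{\overline{B_R}}|w|=\max_{\partial B_R}|w|\to0$.

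The step I expect to be hardest is the asymptotic-barrier construction. Establishing that $\ur+\Phi$ is an $O(|x|^{-2})$ approximate solution requires the full second-order expansion of $\ur$ from Proposition \ref{prop13} together with the orthogonality $x\cdot D\Phi\equiv0$, and one must then produce an admissible correction $g_\varepsilon$ that is $o(1)$, keeps the perturbed graph strictly convex and spacelike for $|x|\ge R_\varepsilon$, and whose curvature effect genuinely dominates the defect — delicate because the scaling is borderline: the principal curvatures of $\M_{\ur}$ are of size $|x|^{-1}$ in $n-1$ directions and $|x|^{-2}$ in the radial direction, precisely the regime that produces the $\log$ term in \eqref{asymptotic}. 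A secondary difficulty, already accommodated above, is the degeneracy of \eqref{soliton} — the right-hand side tends to $0$ as $|Du|\uparrow\td{\C}$ — so the interior $C^2$ estimate has to be confined to regions where Lemma \ref{lc1lem1} already provides $|Du|\le\td{\C}-\eta$, and the Dirichlet problems on $B_R$ must be arranged so that $|Du_R|$ stays bounded away from $\td{\C}$ up to $\partial B_R$.
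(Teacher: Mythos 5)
Your exhaustion/comparison framework matches the paper's, but the asymptotic step — upgrading the crude bound $m^-\le u-z_0\le m^+$ (with $z_0$ the radial soliton) to $u-z_0\to\Phi$ — is not correctly closed, and the paper's barrier construction sidesteps the issue entirely.

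There are two concrete problems with the barrier $w^+_\varepsilon=z_0+\Phi+\varepsilon+g_\varepsilon$. First, the sign conditions you impose on $g_\varepsilon$ are inconsistent: if $g''_\varepsilon<0$ on $(R_\varepsilon,\infty)$ then $g'_\varepsilon$ is strictly decreasing, hence bounded above by a negative constant for $r>R_\varepsilon+1$, which forces $g_\varepsilon\to-\infty$ rather than $0$; the correction you actually want has $g_\varepsilon>0$, $g'_\varepsilon<0$, $g''_\varepsilon>0$, and the dominant effect is in fact through the gradient term (lowering $|Du|$ raises $w$ and so lowers $1/w$), not the curvature term, which is $O(g'_\varepsilon/r)$ and thus lower order. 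Second, and more seriously, comparison on $\{R_\varepsilon\le|x|\le R'\}$ against the limiting solution $u$ fails at the outer sphere: there you only know $u\le z_0+m^+$, while $w^+_\varepsilon\to z_0+\Phi+\varepsilon$, and $\Phi+\varepsilon<m^+$ in directions where $\varphi$ is small, so $w^+_\varepsilon\ge u$ need not hold on $\{|x|=R'\}$. To salvage this you would have to compare against the Dirichlet approximations $u_{R'}$, which do equal $z_0+\Phi$ on $\{|x|=R'\}$; but that in turn requires $g_\varepsilon(R_\varepsilon)\ge\mathrm{osc}\,\varphi$ to control the inner sphere, and this must be reconciled with the supersolution requirement $|g'_\varepsilon|\gtrsim r^{-2}$ holding on all of $\{r\ge R_\varepsilon\}$ simultaneously with $g_\varepsilon\to 0$ — delicate enough that it cannot just be sketched.

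The paper avoids all of this with the Treibergs tangent-plane construction. With $p_i(\td{\C}y)=D\varphi(\td{\C}y)+(-1)^{i+1}2M\td{\C}y$, each function $z^k_i(x,y)=\varphi(\td{\C}y)-p_i(\td{\C}y)\cdot\td{\C}y+z^k_0(|x+p_i(\td{\C}y)|)$ is an \emph{exact} solution of \eqref{soliton} (a translate of the radial soliton plus an affine term), so $q^k_1=\sup_y z^k_1(\cdot,y)$ is a genuine subsolution, $q^k_2=\inf_y z^k_2(\cdot,y)$ a genuine supersolution, and both have the expansion \eqref{asymptotic behavior} built in. Lemma \ref{lem7.2.1} then gives $q^k_1\le u^\tau\le q^k_2$ for every Dirichlet approximation, so the asymptotics survive the limit with no extra work. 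A second difference worth noting: for $k<n$ the paper solves the Legendre-transformed problem \eqref{D5} on $B_\tau$ (first producing the $k=n$ solution $u^n$ to supply the dual boundary data) rather than the original problem on a large ball, precisely because the equation degenerates as $|Du|\uparrow\td{\C}$ and the dual picture keeps the problem uniformly elliptic with convexity automatic. Your direct Dirichlet approach on $B_R$ would need to re-establish the boundary $C^2$ estimates under this degeneracy, which is not addressed.
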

\par
\subsection{Constructing barriers} We first construct the barrier functions of equation \eqref{soliton}.
Following the ideas of \cite{SX16, T}, we denote the radial solution of \eqref{soliton} by $z_0^k(|x|),$ whose asymptotic expansion satisfies
\eqref{asymptotic} with $c_0=0$.
Let $$p_i(\tilde{\C}y)=D\varphi(\tilde{\C}y)+(-1)^{i+1}2M\tilde{\C}y,\,\,i=1,2$$
for any $y\in\mathbb{S}^{n-1}$.
Set,
$$z_i^k(x,y)=\varphi(\tilde{\C}y)-p_i(\tilde{\C}y)\cdot \tilde{\C}y+z_0^k(|x+p_i(\tilde{\C}y)|),\,\forall x\in\mathbb{R}^n, y\in\mathbb{S}^{n-1}.$$
Then,
$$q_1^k(x)=\sup_{y\in\mathbb{S}^{n-1}}z_1^k(x,y)$$
is a subsolution of \eqref{soliton} and
$$q_2^k=\inf_{y\in\mathbb{S}^{n-1}}z_2^k(x,y)$$ is a supersolution of \eqref{soliton}.
Moreover, $q_1^k(x)\leq q_2^k(x)$ and when $|x|\rightarrow+\infty$, we have
$$q_i^k(x)\rightarrow \tilde{\C}|x|-\frac{1}{\C^2}\sqrt[k]{\frac{n-k}{n}}\log|x|+\varphi\left(\tilde{\C}\frac{x}{|x|}\right),\,\,i=1, 2.$$
\par
\subsection{The Dirichlet problem}
First, let's solve equation \eqref{soliton} for the case when $k=n$. For any $t>\min_{\mathbb{R}^n}q_2^n$, we let
$$\p\Omega_t=\{x\in\mathbb{R}^n|q_1^n(x)<t<q_2^n(x)\},$$
and $\Omega_t$ be a smooth, strictly convex domain in $\R^n.$
Consider the following Dirichlet problem:
\be\label{D4}
\left\{
\begin{aligned}
\sigma_n^{1/n}(\kappa(\M_{u_t}))&=\C+\langle \nu, E\rangle\,\, &\text{in $\Omega_t$}\\
u_t&=t\,\,&\text{on $\p \Omega_t$}
\end{aligned}
\right..
\ee
By a small modification of \cite{Del}, we know that there exists a unique solution $u_t$ of \eqref{D4}.
Then, applying the local $C^1$, $C^2$ estimates obtained in \cite{BS} we conclude that,
there exists a subsequence $\{u_{t_i}\}_{i=1}^\infty\,\,(t_i\goto\infty$ as $i\goto\infty),$ that converges to an entire,
strictly convex solution $u$ of \eqref{soliton} for $k=n.$
Moreover, it's easy to see that $u(x)$ satisfies the desired asymptotic behavior as $|x|\goto\infty$.
From now on, we will denote this solution by $u^n.$ We will also denote the Legendre transform of $u^n$ by $u^{n*}$.

Next, we consider the case when $k<n$. We denote the legendre transform of $z_0^k$ by $(z_0^k)^*$, that is,
$$(z^k_0)^*(\tau)=r\cdot \frac{\p z^k_0}{\p r}-z^k_0(r),\text{ where } \tau=\frac{\p z^k_0}{\p r}.$$
Using the asymptotic expansion of $z_0$ derived in Section \ref{rs}, we know
$$(z^k_0)^*(\tau)=\frac{1}{\C^2}\sqrt[k]{\frac{n-k}{n}}(\log r-1)+O\left(\frac{1}{r}\right).$$
We denote its principal part:
$$(\tilde{z}^k_0)^*(\tau)=\frac{1}{\C^2}\sqrt[k]{\frac{n-k}{n}}(\log r(\tau)-1),$$
it is clear that $(\tilde{z}^k_0)^*$ is unbounded in $B_{\td{\C}}$.

To make sure our solution is convex, we consider the dual Dirichelt problem
on $B_\tau$ for any $\tau<\tilde{\C}$,
\be\label{D5}
\left\{
\begin{aligned}
\hF(\w\gas_{ik}\us_{kl}\gas_{lj})&=\frac{\binom{n}{k}^{-1/k}}{\C-\frac{1}{\sqrt{1-|\xi|^2}}}\,\,\text{in $B_{\tau}$},\\
\us&=u^{n*}+(z^k_0)^*-(z^n_0)^*\,\,\text{on $\p B_{\tau}$}.
\end{aligned}
\right.
\ee
Here, we have $\w=\sqrt{1-|\xi|^2},$ $\gas_{ij}=\delta_{ij}-\frac{\xi_i\xi_j}{1+\w},$ $\us_{kl}=\frac{\p^2u}{\p\xi_k\p\xi_l},$  $\hF(\w\gas_{ik}\us_{kl}\gas_{lj})$ $=\lt(\frac{\s_n}{\s_{n-k}}(\ka^*[\w\gas_{ik}\us_{kl}\gas_{lj}])\rt)^{1/k},$ and
$\ka^*[\w\gas_{ik}\us_{kl}\gas_{lj}]=(\ka^*_1, \cdots, \ka^*_n)$ are the eigenvalues of the matrix $(\w\gas_{ik}\us_{kl}\gas_{lj}).$
The solvability of \eqref{D5} has been established in Section \ref{Dirichlet}. Therefore, by standard PDE theorems, in order to prove
Proposition \ref{es-th1} we only need to obtain
local $C^1$  and local $C^2$ estimates for the translating soliton equation \eqref{soliton}. In order to do so, we will need the following Lemma.

\begin{lemm}\label{lem7.2.1} Let $u^{\tau*}$ be a solution to equation \eqref{D5} and $u^\tau$ be the Legendre transform of $u^{\tau*}.$ Then,
for any $x\in Du^{\tau*}(B_\tau),$ we have
 $q_1^k(x)\leq u^\tau(x)\leq q_2^k(x).$
\end{lemm}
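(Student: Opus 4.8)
The plan is to use the comparison principle for the translating soliton equation \eqref{soliton}, combined with the barrier functions $q_1^k$ and $q_2^k$ constructed in the previous subsection. Recall that the equation \eqref{soliton} in its Legendre-dual form \eqref{D5} possesses the maximum principle: the right-hand side $\binom{n}{k}^{-1/k}\big(\C-(1-|\xi|^2)^{-1/2}\big)^{-1}$ is increasing in the appropriate variables (equivalently, after the Legendre transform, the linearized operator of \eqref{soliton} satisfies the maximum principle, as noted in Remark \ref{uniqueness rmk}), so it suffices to establish the comparison on the boundary of the domain $\Omega_\tau := Du^{\tau*}(B_\tau)$ together with the subsolution/supersolution properties of $q_1^k,q_2^k$ in the interior. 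The latter are exactly the statements recorded in the barrier construction: $q_1^k$ is a subsolution and $q_2^k$ is a supersolution of \eqref{soliton}, and $q_1^k\le q_2^k$ everywhere.

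First I would identify the boundary $\partial\Omega_\tau$ and the values of $u^\tau$ there. By the Legendre duality, $\partial\Omega_\tau = Du^{\tau*}(\partial B_\tau)$, and on $\partial B_\tau$ the boundary data in \eqref{D5} is $u^{\tau*} = u^{n*} + (z_0^k)^* - (z_0^n)^*$. Taking the Legendre transform back, the graph of $u^\tau$ over $\partial\Omega_\tau$ agrees with the corresponding ``sum'' of graphs: on the radial sphere $\{|x|=$ (the value dual to $\tau$)$\}$ one has $u^\tau(x) = u^n(x) + z_0^k(|x|) - z_0^n(|x|)$, since adding the fixed radial function $(z_0^k)^* - (z_0^n)^*$ to $u^{n*}$ corresponds to shifting the support-type data by the radial quantity $z_0^k - z_0^n$ after transforming. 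I would then compare this boundary value with $q_1^k$ and $q_2^k$ on $\partial\Omega_\tau$: using the asymptotic expansions from Section \ref{rs} (Proposition \ref{prop13}) and the fact that $q_i^k(x) = \varphi(\tilde{\C}x/|x|) + \tilde{\C}|x| - \tfrac{1}{\C^2}\sqrt[k]{(n-k)/n}\log|x| + o(1)$ while $u^n(x)$ (hence $u^\tau$ on the boundary) has the matching leading behavior, one checks the required inequalities $q_1^k \le u^\tau \le q_2^k$ on $\partial\Omega_\tau$. Here the crucial input is that $q_1^k \le q_2^k$ is forced by the ordering $z_1^k(x,y) \le z_2^k(x,y)$ coming from the sign of the $\pm 2M\tilde{\C}y$ perturbation in $p_i$, together with the fact that $u^n$ itself lies between the $k=n$ versions of these barriers.

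With the boundary comparison in hand, the interior follows from the maximum principle applied to \eqref{soliton}: if $u^\tau - q_2^k$ attained a positive interior maximum on $\Omega_\tau$, then at that point the concavity of $\big(\sigma_k/\binom{n}{k}\big)^{1/k}$ and the supersolution property of $q_2^k$ would force $\sigma_k^{1/k}(\kappa[\M_{u^\tau}]) < \sigma_k^{1/k}(\kappa[\M_{q_2^k}])$ while the equation demands equality of the right-hand sides evaluated at ordered gradients, giving a contradiction; symmetrically for $q_1^k - u^\tau$. I expect the main obstacle to be the bookkeeping at the boundary: verifying cleanly that the Legendre transform of the shifted data $u^{n*} + (z_0^k)^* - (z_0^n)^*$ really is the pointwise sum $u^n + z_0^k - z_0^n$ on the relevant sphere, and then extracting from the known asymptotics of $z_0^k$, $z_0^n$, and $u^n$ the two-sided bound against $q_1^k$ and $q_2^k$ there; once that is settled, the maximum-principle step is routine given the structure already developed in Section \ref{Dirichlet}.
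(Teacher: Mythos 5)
There is a genuine gap in the boundary-comparison step, which is the heart of the lemma. You propose to pass back to the $x$-picture and assert that, because the Dirichlet data in \eqref{D5} is $u^{\tau*}=u^{n*}+(z_0^k)^*-(z_0^n)^*$ on $\partial B_\tau$, the Legendre transform $u^\tau$ satisfies $u^\tau(x)=u^n(x)+z_0^k(|x|)-z_0^n(|x|)$ on the boundary sphere. This is not correct: the Legendre transform is not additive, so $(f+g)^*$ is in general not $f^*+g^*$, and there is no pointwise identity of that sort even just on the boundary. Moreover $\partial\big(Du^{\tau*}(B_\tau)\big)$ is the $Du^{\tau*}$-image of a sphere, not itself a round sphere, since $u^{\tau*}$ is not radial. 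Your fallback --- using the asymptotic expansions of $z_0^k$, $z_0^n$, $u^n$ from Proposition \ref{prop13} --- only controls things as $|x|\to\infty$, which does not give the exact two-sided bound you need on $\partial\Omega_\tau$ for every fixed $\tau<\tilde{\C}$.

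The fix is to stay in the dual $\xi$-variable, which is exactly what the paper does. Observe that $q_1^k\le u^\tau\le q_2^k$ on $\Omega_\tau$ is equivalent to $(z_2^k)^*(\xi,y)\le u^{\tau*}(\xi)\le (z_1^k)^*(\xi,y)$ on $B_\tau$ for all $y$. A short computation (using only that $z_i^k$ and $z_i^n$ differ by replacing $z_0^n$ with $z_0^k$ under the same translation, and that translations and affine additions act affinely on Legendre transforms) shows
$(z_i^k)^*(\xi,y)=(z_i^n)^*(\xi,y)+(z_0^k)^*(|\xi|)-(z_0^n)^*(|\xi|)$. Since the boundary data is $u^{\tau*}=u^{n*}+(z_0^k)^*-(z_0^n)^*$ on $\partial B_\tau$, the radial term cancels and the boundary inequality reduces to the already known $k=n$ comparison $(z_2^n)^*<u^{n*}<(z_1^n)^*$. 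The interior step is then the comparison principle for \eqref{D5}, as you correctly anticipate; but the additive structure you need lives in the $\xi$-picture, not the $x$-picture.
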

\begin{proof} Without causing confusion we shall drop the superscript $\tau$ in the proof.
We only need to prove that
$$z_1^k(x,y)\leq u(x)\leq z_2^k(x,y),$$ for any $x\in Du^{\tau*}(B_\tau)$ and $y\in\mathbb{S}^{n-1}$. This is equivalent to prove
$$(z_2^k)^*(\xi,y)\leq u^*(\xi)\leq (z_1^k)^*(\xi,y),$$
for any $\xi\in B_\tau$ and $y\in\mathbb{S}^{n-1}.$  Since we have
\begin{eqnarray}
(z_i^k)^*(\xi,y)&=&(z_0^k)^*(|\xi|)-p_i(\tilde{\C}y)\cdot\xi-\varphi(\tilde{\C}y)+p_i(\tilde{\C}y)\cdot \tilde{\C}y\\
&=&(z_0^k)^*(|\xi|)-(z_0^n)^*(|\xi|)+(z_i^n)^*(\xi,y),\nonumber
\end{eqnarray}
and
$$(z_2^n)^*(\xi,y)< u^{n*}(\xi)< (z_1^n)^*(\xi,y),$$ we obtain on $\p B_{\tau}$,
$$(z_2^k)^*(\xi,y)\leq u^*(\xi)\leq (z_1^k)^*(\xi,y).$$
By comparison principle, we finish the proof.
\end{proof}
\par
\subsection{Local $C^1$ and $C^2$ estimates}
Similar to Lemma
\ref{lc1lem1}, we have the following local $C^1$ estimate Lemma for translating solitons.
\begin{lemm}
\label{lemm81}
Let $\Omega\subset \R^n$ be a bounded open set. Let $u, \bar{u}, \Psi:\Omega\rightarrow\mathbb{R}^n$ be strictly $\C$-spacelike, i.e.
$$|Du|,|D\uu|,|D\Psi|<\td{\C}.$$
Assume that $u$ is strictly convex and $u\leq\bar{u}$ in $\Omega.$ Also
assume that near $\partial\Omega,$ we have $\Psi>\bar{u}.$
Consider the set, where $u>\Psi.$ For every $x$ in that set, we have the following gradient estimate for $u:$
\[\frac{1}{\sqrt{\td{\C}^2-|Du|^2}}\leq\frac{1}{u(x)-\Psi(x)}\cdot\sup\limits_{\{u>\Psi\}}\frac{\bar{u}-\Psi}{\sqrt{\td{\C}^2-|D\psi|^2}}.\]
\end{lemm}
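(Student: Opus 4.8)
The plan is to mimic the proof of Lemma \ref{lc1lem1} (Lemma 5.1 in \cite{BS}), since the $\C$-spacelike condition $|Du|<\td\C$ and the equation \eqref{soliton} are obtained from the strictly spacelike case by a linear rescaling in the $x_{n+1}$-direction. First I would make this explicit: if $\M_u$ is $\C$-spacelike, then the vertical dilation $\hat u(x) := \td\C^{-1}u(x)$ — or rather $\hat X = (x, \hat u)$ with $\hat u = u/\td\C$ — is strictly spacelike in the usual sense, $|D\hat u|<1$. Under such a dilation the second fundamental form, the normal, and the support function transform by explicit factors, so that equation \eqref{soliton} for $u$ becomes an equation of the form $\s_k(\ka[\M_{\hat u}]) = \hat\psi(D\hat u)$ for $\hat u$, with $\hat\psi$ a positive function of the gradient alone. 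The key point is only that the transformed equation still has the structural features needed for the maximum-principle argument of \cite{BS}: namely $F=\s_k^{1/k}$ (or $\s_k$) is concave and degree-one homogeneous on the positive cone, and the right-hand side is controlled. Since $q_1^k\le u\le q_2^k$ from Lemma \ref{lem7.2.1} gives us the barrier $\bar u$, and $\Psi$ plays the role of the inner test function, the hypotheses of Lemma \ref{lc1lem1} are met by $\hat u$, $\hat{\bar u}:=\bar u/\td\C$, $\hat\Psi:=\Psi/\td\C$.

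The main steps, in order, are: (1) record the transformation rules for $g_{ij}$, $h_{ij}$, $\nu$ and $v=\langle X,\nu\rangle$ under $x_{n+1}\mapsto x_{n+1}/\td\C$, and check that the $\C$-spacelike graph $u$ becomes a spacelike graph $\hat u=u/\td\C$ with $|D\hat u|^2 = |Du|^2/\td\C^2 <1$; (2) verify that \eqref{soliton} rewrites as a prescribed-$\s_k$-curvature equation for $\hat u$ whose right-hand side depends only on $D\hat u$ (so in particular $\psi_u=0\ge 0$, and the monotonicity hypothesis of Lemma \ref{lc1lem1} holds trivially); (3) apply Lemma \ref{lc1lem1} to $\hat u, \hat{\bar u}, \hat\Psi$ on $\Omega$, which are strictly spacelike with $\hat u$ strictly convex, $\hat u\le\hat{\bar u}$, and $\hat\Psi>\hat{\bar u}$ near $\p\Omega$ — all immediate from dividing the corresponding inequalities for $u,\bar u,\Psi$ by $\td\C>0$; (4) translate the resulting estimate
\[
\frac{1}{\sqrt{1-|D\hat u|^2}}\le \frac{1}{\hat u(x)-\hat\Psi(x)}\cdot\sup_{\{\hat u>\hat\Psi\}}\frac{\hat{\bar u}-\hat\Psi}{\sqrt{1-|D\hat\Psi|^2}}
\]
back in terms of $u,\bar u,\Psi$. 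Using $1-|D\hat u|^2 = (\td\C^2-|Du|^2)/\td\C^2$ and the same for $\Psi$, and $\hat u(x)-\hat\Psi(x) = (u(x)-\Psi(x))/\td\C$, $\hat{\bar u}-\hat\Psi=(\bar u-\Psi)/\td\C$, the factors of $\td\C$ cancel and one lands exactly on the claimed inequality
\[
\frac{1}{\sqrt{\td\C^2-|Du|^2}}\le\frac{1}{u(x)-\Psi(x)}\cdot\sup_{\{u>\Psi\}}\frac{\bar u-\Psi}{\sqrt{\td\C^2-|D\Psi|^2}}.
\]

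Alternatively, and perhaps more robustly, I would not invoke \eqref{soliton} through a change of variables at all but rerun the Bayard--Schn\"urer argument directly: take the test function $w = (\bar u-u)^{-1}\cdot(\text{something})$ — in \cite{BS} one considers a quotient of the form $\eta = \varphi(\bar u - u)\,\langle X,\nu\rangle$-type expression, or more precisely the function whose maximum over $\{u>\Psi\}$ one bounds — differentiate twice at an interior maximum, and use the concavity and homogeneity of $\s_k^{1/k}$ together with $\ka[\M_u]\in\Gamma_k$. The only place where the equation enters is through a one-sided bound on $F^{ij}h_{ij}=$ (a multiple of) $\s_k^{(k-1)/k}$ and on the derivatives of the right-hand side of \eqref{soliton}; since here the right-hand side is $\binom nk^{1/k}(\C - (1-|Du|^2)^{-1/2})$, bounded and with controlled gradient on the region where $|Du|$ is bounded away from $\td\C$, these estimates go through with the constant $C$ as stated depending on $\sup(\bar u-\Psi)$, $\sup(1-|D\Psi|^2)^{-1/2}$ (here $(\td\C^2-|D\Psi|^2)^{-1/2}$), and $|D^2\Psi|$.

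I expect the genuinely delicate point to be bookkeeping rather than conceptual: making sure the vertical rescaling is legitimate — i.e. that the hyperboloid $\mathbb H^n$ and the Gauss-map picture are respected, and that the rescaled equation is still of the exact form to which Lemma \ref{lc1lem1} applies (the right-hand side must be nonincreasing in $\hat u$, which holds since it is independent of $\hat u$) — and then carrying the $\td\C$ factors through cleanly so the final inequality has precisely the stated form with $\sqrt{\td\C^2-|Du|^2}$ and $\sqrt{\td\C^2-|D\Psi|^2}$. There is no new analytic difficulty beyond what is already contained in \cite{BS}; the content is that the translating-soliton equation \eqref{soliton}, restricted to the region where $|Du|$ stays below $\td\C$, is uniformly of the type covered by the prescribed-curvature local gradient estimate, because $\C - (1-|Du|^2)^{-1/2}$ is bounded between two positive constants there.
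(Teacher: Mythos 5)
Your primary route (rescale by $\td\C$ and cite Lemma \ref{lc1lem1} directly) is correct, and the arithmetic in step (4) does close up exactly: $1-|D\hat u|^2=(\td\C^2-|Du|^2)/\td\C^2$, and the factors of $\td\C$ cancel across the inequality. This is a cleaner derivation than what the paper offers; the paper simply states that the proof is ``the same as the proof of Lemma 5.1 in \cite{BS}'', i.e.\ it intends the reader to rerun the Bayard--Schn\"urer argument with the strict-spacelike condition replaced by $\C$-spacelike throughout --- which is precisely your second, ``alternative'' route. The rescaling observation is a small but genuine improvement, since it avoids rewriting any estimates.

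Two remarks. First, your step (2) is unnecessary and slightly misleading. Lemma \ref{lc1lem1} is a pure convexity statement: its hypotheses involve only spacelikeness, strict convexity of $u$, the ordering $u<\bar u$, and the behaviour of $\Psi$ near $\partial\Omega$; no curvature equation appears, and the constant in the conclusion does not depend on any $\psi$. So there is nothing to check about how \eqref{soliton} transforms. Moreover, the transformation law you assert --- that after $\hat u=u/\td\C$ one has $\sigma_k(\kappa[\M_{\hat u}])=\hat\psi(D\hat u)$ with $\hat\psi$ a function of the gradient alone --- is dubious for general $k<n$: a vertical dilation is not a Minkowski isometry, and while the Gauss curvature does pick up a gradient-dependent conformal factor, the individual principal curvatures of $\M_{\hat u}$ are the eigenvalues of $\hat g^{-1}\hat h$ and are not rescalings of those of $\M_u$ by a common factor. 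Fortunately none of this matters, because steps (3) and (4) never use the equation. Second, a minor bookkeeping point: Lemma \ref{lc1lem1} is stated with the strict ordering $u<\bar u$, while Lemma \ref{lemm81} assumes only $u\leq\bar u$; either relax the former to $\leq$ (the [BS] proof tolerates this) or note that the conclusion for $u\leq\bar u$ follows by approximation.
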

Since the proof is the same as the proof of Lemma 5.1 in \cite{BS}, we skip it here.

We now construct $\Psi.$ Following the argument in Section 4 of \cite{Ba2020}, let
$$\Psi(x)=-A_0+\tilde{\C}\sqrt{1+|x|^2}.$$ It is clear that when $|x|$ sufficiently large we have $\Psi(x)>q_2(x)$.
On the other hand, for any compact set $\K\subset\R^n$, we can always choose $A_0$ sufficiently large such that $\Psi(x)<q_1(x)$ in $\K$.
Applying Lemma \ref{lemm81} we obtain that for any $\K\subset \R^n$ and any strictly convex function
$q_1(x)<u(x)<q_2(x)$ satisfying \eqref{soliton}, whose domain of definition contains $\K,$ there exists a local $C^1$ bound $C_\K$
for $u(x)$ in $\K$ that is only depending on $\K$.

Using the idea of \cite{WX}, we can prove the following Pogorelov type local $C^2$ estimate
for translating solitons.
\begin{lemm}
\label{lc2lem1}
Let $u$ be the solution of \eqref{soliton} defined on $\Omega$.
For any given $s>\min\limits_{\R^n}u(x)+1,$ suppose $u|_{\T\Omega}>s.$ Let $\kappa_{\max}(x)$
be the largest principal curvature of $\M_{u}=\{(x, u(x))|x\in\Omega\}$ at $x$.
Then, we have
\[\max\limits_{\M_{u}}(s-u)\kappa_{\max}\leq C_1.\]
Here, $C_1$ only depends on the local $C^1$ estimate of $u$. More specifically, $C_1$ depends on the lower bound of
$\C+\langle \nu, E\rangle.$
\end{lemm}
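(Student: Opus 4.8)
The plan is to mimic the Pogorelov-type argument of Lemma 24 in \cite{WX} (and the closely related Lemma \ref{lem-local c2} stated above), adapting it to the translating-soliton equation \eqref{soliton}. First I would set up the test function
\[
\phi=\beta\log(s-u)+\log\log P-N\langle\nu,E\rangle,\qquad P=\sum_l e^{\kappa_l},
\]
where $\beta,N$ are large constants to be chosen. Since $u|_{\partial\Omega}>s$ and $\phi\to-\infty$ as one approaches $\{u=s\}$, the maximum of $\phi$ over $\{u<s\}$ is attained at an interior point $P_0$; at $P_0$ choose a normal frame $\{\tau_1,\dots,\tau_n\}$ diagonalizing the second fundamental form, $h_{ij}=\kappa_i\delta_{ij}$, with $\kappa_1\geq\cdots\geq\kappa_n$. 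Writing the equation in the form $\hat F(\kappa)=\big(\sigma_k/\binom{n}{k}\big)^{1/k}=\C+\langle\nu,E\rangle=:\tilde\psi$, I differentiate $\phi$ once to get the critical equation and twice, then contract with $\hat F^{ii}$ (equivalently $\sigma_k^{ii}$). The key structural facts I need are: concavity of $\big(\sigma_k/\binom{n}{k}\big)^{1/k}$, the commutation/Codazzi/Gauss identities \eqref{Gauss}–\eqref{a1.2} so that $h_{llii}$ can be replaced by $h_{iill}$ up to curvature terms, and differentiation of the equation: here $d_\nu\tilde\psi(\tau_l)$ and $d_\nu d_\nu\tilde\psi$ are controlled because $\tilde\psi=\C+\langle\nu,E\rangle$ depends only on $\nu$ and $\langle\nu,E\rangle$ is bounded on the relevant $C^1$-bounded region, while $d_X\tilde\psi=0$. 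This is actually \emph{simpler} than the $\psi(X,\nu)$ case of Lemma \ref{lem c2 global}/Lemma \ref{lem-local c2*} since there is no $X$-dependence.

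Second, I would reuse verbatim the combinatorial machinery already set up in the proof of Lemma \ref{lem c2 global}: define $A_i,B_i,C_i,D_i,E_i$ as there, invoke Claim \ref{claim 1} and Claim \ref{claim 2} (valid for $k=n-1,n\geq3$ and $k=n-2,n\geq5$, which is exactly the range where these local $C^2$ estimates are needed — although for Theorem \ref{theo3} the soliton case with general $k$ uses instead the strictly convex Pogorelov estimate of \cite{WX}; I would structure the proof so that the strictly convex version is the one actually applied, since $u$ is strictly convex by construction, and then the third-derivative terms are handled by the positivity $\sigma_k^{ii}>0$ and the full strength of the convex Pogorelov estimate rather than the partial Claims). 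After discarding the manifestly nonnegative bracket $\sum_i(A_i+B_i+C_i+D_i-E_i)$, the inequality $0\geq\sigma_k^{ii}\phi_{ii}$ reduces to
\[
0\geq \frac{\beta k\sigma_k\langle\nu,E\rangle}{s-u}-\frac{\beta\sigma_k^{ii}u_i^2}{(s-u)^2}+(-N\langle\nu,E\rangle-1)\sigma_k^{ii}\kappa_i^2-C\kappa_1,
\]
where $C$ depends only on the $C^1$ data (through $\langle\nu,E\rangle$ and the lower bound $\C+\langle\nu,E\rangle>0$).

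Third, I would absorb the bad term $-\beta\sigma_k^{ii}u_i^2/(s-u)^2$ using the critical equation \eqref{e2.11}-analogue as in \eqref{new52}: split the sum over $|\kappa_i|\geq\delta\kappa_1$ and $|\kappa_i|<\delta\kappa_1$, bound $P_i^2\leq 2N^2u_i^2\kappa_i^2\cdot(\ldots)+\ldots$, then choose $N$ large so that $\tfrac12\sum_{|\kappa_i|\geq\delta\kappa_1}\sigma_k^{ii}\kappa_i^2(-N\langle\nu,E\rangle-1)-C\kappa_1\geq0$, and then $\beta=\beta(N)$ large so that the contribution of the indices with $|\kappa_i|<\delta\kappa_1$ is also nonnegative. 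One is left with
\[
\frac{\beta C}{s-u}+\sum_{|\kappa_i|\geq\delta\kappa_1}\frac{2\beta\sigma_k^{ii}u_i^2}{(s-u)^2}\geq\sum_{|\kappa_i|\geq\delta\kappa_1}\sigma_k^{ii}\kappa_i^2(-N\langle\nu,E\rangle-1),
\]
and a two-case dichotomy exactly as in the last display of the proof of Lemma \ref{lem-local c2*} yields $(s-u)\kappa_1\leq C_1$ with $C_1$ depending only on the local $C^1$ bound and on the lower bound of $\C+\langle\nu,E\rangle$. Then $\max_{\M_u}(s-u)\kappa_{\max}\leq C_1$ follows since $\phi$ is maximized at $P_0$ and $s-u$ is bounded above on the compact set where we work.

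The main obstacle I anticipate is book-keeping the gradient terms coming from $d_\nu\tilde\psi$ and from the extra factor $\log\log P$: one must verify that $\sum_l d_\nu\tilde\psi(\tau_l)\tfrac{\beta u_l}{s-u}$ and the analogous terms stay of lower order (linear in $\kappa_1$ times $C^1$-data) after using the critical equation to eliminate $P_i$, so that they are dominated by the good term $(-N\langle\nu,E\rangle-1)\sigma_k^{11}\kappa_1^2$ once $N$ is large; since $\tilde\psi$ depends only on $\nu$ this is routine but must be done carefully. A secondary point is that $\C+\langle\nu,E\rangle=(\sigma_k/\binom{n}{k})^{1/k}>0$ is exactly the quantity that could degenerate, so every constant must be traced to depend only on its \emph{positive lower bound}, which the local $C^1$ estimate (Lemma \ref{lemm81}) guarantees on compact sets; this is why the statement records the dependence of $C_1$ on the lower bound of $\C+\langle\nu,E\rangle$.
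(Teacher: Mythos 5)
You are right to model the argument on the Pogorelov estimates already in the paper, and your observation that $\tilde\psi=\mathcal{C}+\langle\nu,E\rangle$ depends only on $\nu$ — so that $d_X\tilde\psi=0$, and $d_\nu\tilde\psi(\tau_l)=-\kappa_lu_l$, $\tilde\psi_{ll}=-h_{lll}u_l+\kappa_l^2\langle\nu,E\rangle$ are the only new contributions — correctly isolates what is different about the soliton equation. The dependence of $C_1$ on the lower bound of $\mathcal{C}+\langle\nu,E\rangle$ is also traced correctly, and the final dichotomy and choice of $N$, then $\beta=\beta(N)$, are sound once the intermediate inequality is established.

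However, there is a genuine gap at the key step. You set up the test function $\phi=\beta\log(s-u)+\log\log P-N\langle\nu,E\rangle$ and define $A_i,B_i,C_i,D_i,E_i$ exactly as in the proof of Lemma~\ref{lem-local c2*}, then assert that the ``bracket'' $\sum_i(A_i+B_i+C_i+D_i-E_i)$ is \emph{manifestly} nonnegative and discard it. It is not manifest: establishing $\sum_i(A_i+B_i+C_i+D_i-E_i)\geq 0$ is exactly the content of Claim~\ref{claim 1} and Claim~\ref{claim 2}, and those claims are proved (via \cite{RW}, \cite{RW1}) only for $k=n-1$ ($n\geq3$) and $k=n-2$ ($n\geq5$). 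Lemma~\ref{lc2lem1} is needed for all $1\leq k\leq n$. You flag the issue in a parenthetical — that the strictly convex estimate of \cite{WX} should be the one applied — but then continue with the $\log\log P$ / $A_i$--$E_i$ decomposition anyway, which is the machinery built for the $k$-convex (non-convex) setting. For a strictly convex solution one should not need this at all: the paper invokes Lemma~24 of \cite{WX}, which works in the full cone $\Gamma_n$ with a test function adapted to convexity (compare the $P_m=\sum_j\kappa_j^m$ and $P_m^{1/m}$ device used in Proposition~\ref{ubprop1}), where the third-order terms coming from $-\hat F^{pq,rs}h_{pql}h_{rsl}$ are handled directly by concavity of $\hat F=(\sigma_k/\binom{n}{k})^{1/k}$, together with the positivity of all $\kappa_i$ and the standard Andrews-type inequality for $\sum_{p\neq q}\frac{\hat F^{pp}-\hat F^{qq}}{\kappa_p-\kappa_q}h_{pqi}^2$. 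In short: commit to the strictly convex Pogorelov argument (test function and third-order bookkeeping as in \cite{WX} Lemma~24 or the paper's Proposition~\ref{ubprop1}), rather than importing the $\log\log P$/Claim~\ref{claim 1}--\ref{claim 2} framework and claiming a nonnegativity that is not manifest outside $k\in\{n-1,n-2\}$.
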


Following the argument in Section \ref{prescribed curvature}, we complete the proof of Proposition \ref{es-th1}.
\par
\subsection{Proof of Theorem \ref{theo3}}
In this subsection, we will prove that the hypersurface $\M_u$ constructed in Proposition \ref{es-th1} has bounded principal curvatures. This completes the proof of Theorem \ref{theo3}. For our convenience, in the following, we will drop the superscript $k,$ and the updated configuration $z_0^k$
now becomes $z_0.$

Suppose $u$ is a strictly convex solution of \eqref{soliton} and $u^*$ is the Legendre transform of $u$. Then $\us$ satisfies
\be\label{pfth4.2}
\hF(\w\gas_{ik}\us_{kl}\gas_{lj})=\frac{\binom{n}{k}^{-1/k}}{\C-\frac{1}{\sqrt{1-|\xi|^2}}}\,\,\text{in $B_{\td{\C}}.$}
\ee
We also denote the Legendre transform of $z_0$ by $z_0^*$, that is,
$$z^*_0(\tau)=r\cdot \frac{\p z_0}{\p r}-z_0(r),\text{ where } \tau=\frac{\p z_0}{\p r}.$$
Using the asymptotic expansion of $z_0$ derived in Section \ref{rs}, we know
$$z^*_0(\tau)=\frac{1}{\C^2}\sqrt[k]{\frac{n-k}{n}}(\log r-1)+O\left(\frac{1}{r}\right).$$
We denote its principal part as
$$\tilde{z}^*_0(\tau)=\frac{1}{\C^2}\sqrt[k]{\frac{n-k}{n}}(\log r(\tau)-1),$$
it is clear that $\tilde{z}^*_0(\tau)$ is unbounded in $B_{\td{\C}}$.
\begin{lemm}
\label{sub8.4lem1}
Let $u^*$ and $\td{z}^*_0$ be defined as above. Then we have,
\begin{eqnarray}\label{85}
\lim_{\xi\rightarrow \xi_0}(\us(\xi)-\tilde{z}^*_0(|\xi|))=-\varphi(\xi_0), \text{ for any } \xi_0\in \p B_{\tilde{\C}}, \xi\in B_{\tilde{\C}}.
\end{eqnarray}
\end{lemm}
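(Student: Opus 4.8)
The plan is to establish the asymptotic identity \eqref{85} by comparing $\us$ against the barriers $q_1^k,q_2^k$ constructed earlier, passing everything through the Legendre transform. Recall from Lemma \ref{lem7.2.1} and the construction of the barriers that, on the domain of $u,$ we have $q_1^k(x)\leq u(x)\leq q_2^k(x),$ and correspondingly on $B_{\td\C}$ the Legendre transforms satisfy $(z_2^k)^*(\xi,y)\leq\us(\xi)\leq(z_1^k)^*(\xi,y)$ for every $y\in\mathbb S^{n-1}.$ Using the explicit formula $(z_i^k)^*(\xi,y)=(z_0^k)^*(|\xi|)-p_i(\td\C y)\cdot\xi-\varphi(\td\C y)+p_i(\td\C y)\cdot\td\C y,$ where $p_i(\td\C y)=D\varphi(\td\C y)+(-1)^{i+1}2M\td\C y,$ the plan is to subtract $\td z_0^*(|\xi|)$ from each side and estimate the difference. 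Since $(z_0^k)^*(\tau)=\td z_0^*(\tau)+O(1/r)$ and $r(\tau)\to\infty$ as $|\tau|\to\td\C,$ the remainder $(z_0^k)^*(|\xi|)-\td z_0^*(|\xi|)\to 0$ as $\xi\to\p B_{\td\C},$ so it contributes nothing in the limit.

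Next I would analyze the term $-p_i(\td\C y)\cdot\xi-\varphi(\td\C y)+p_i(\td\C y)\cdot\td\C y=-p_i(\td\C y)\cdot(\xi-\td\C y)-\varphi(\td\C y).$ Fix $\xi_0\in\p B_{\td\C}$ and let $\xi\to\xi_0.$ The key observation is that we are free to choose $y$ depending on $\xi;$ taking $y=\xi_0/|\xi_0|=\xi_0/\td\C$ (so that $\td\C y=\xi_0$) gives, for the upper barrier,
\[
\us(\xi)-\td z_0^*(|\xi|)\leq -p_1(\xi_0)\cdot(\xi-\xi_0)-\varphi(\xi_0)+o(1),
\]
and the first term tends to $0$ as $\xi\to\xi_0$ since $p_1(\xi_0)=D\varphi(\xi_0)+2M\xi_0$ is a fixed vector. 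Hence $\limsup_{\xi\to\xi_0}\big(\us(\xi)-\td z_0^*(|\xi|)\big)\leq-\varphi(\xi_0).$ Symmetrically, using the lower barrier $(z_2^k)^*$ with the same choice $\td\C y=\xi_0$ yields $\liminf_{\xi\to\xi_0}\big(\us(\xi)-\td z_0^*(|\xi|)\big)\geq-\varphi(\xi_0).$ Combining the two inequalities gives \eqref{85}.

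The main obstacle I anticipate is making the two one-sided estimates genuinely uniform as $\xi\to\xi_0$ while $y$ is being chosen in an $\xi$-dependent way: one must check that the $O(1/r)$ error in $(z_0^k)^*$ is controlled uniformly in the direction, and that in the $\sup$/$\inf$ defining $q_1^k,q_2^k$ (equivalently the $\inf$/$\sup$ over $y$ at the dual level) the near-optimal $y$ can be taken close to $\xi_0/\td\C$ — i.e. that the maximizing/minimizing $y$ converges to the boundary direction of $\xi.$ This is precisely the kind of boundary-behavior argument already carried out for the analogous statement in \cite{WX} (the proof of Lemma \ref{lem-legendre-boundary} type estimates) and in \cite{Ba2020}, so I would invoke that machinery: the strict convexity of $z_0^k$ forces the contact point of the supporting data to degenerate to the boundary as $|\xi|\to\td\C,$ pinning down the relevant direction. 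Once that uniformity is in hand, the rest is the elementary limit computation sketched above.
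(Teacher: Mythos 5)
Your argument is correct and is essentially the same as the paper's: sandwich $\us$ between $(z_2^k)^*(\cdot,y)$ and $(z_1^k)^*(\cdot,y)$, plug in the explicit formula $(z_i^k)^*(\xi,y)=(z_0^k)^*(|\xi|)-p_i(\td\C y)\cdot(\xi-\td\C y)-\varphi(\td\C y)$, choose $y=\xi_0/\td\C$, and pass to the limit using the asymptotic $(z_0^k)^*=\td z_0^*+O(1/r)$. The ``main obstacle'' you flag at the end is not actually there: the two-sided inequality $(z_2^k)^*(\xi,y)\leq\us(\xi)\leq(z_1^k)^*(\xi,y)$ holds simultaneously for every $y\in\mathbb{S}^{n-1}$, so you may simply fix $y=\xi_0/\td\C$ (independent of $\xi$) before letting $\xi\to\xi_0$; there is no $\xi$-dependent choice of $y$ to worry about, and since $z_0^k$ is radial the $O(1/r)$ remainder in $(z_0^k)^*$ depends on $|\xi|$ alone, so no directional uniformity issue arises either.
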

\begin{proof}We will use the auxiliary functions $z_i(x, y),\,\, i=1, 2,$ constructed in Subsection 7.1.
It's easy to see that
$$z_1(x,y)< u(x)< z_2(x,y), \text{ for any } x\in\mathbb{R}^n, y\in \mathbb{S}^{n-1}.$$
By the strictly convexity of $z_i(x, y)$ we have
\be\label{pfth4.1}
z_2^*(\xi,y)< \us(\xi)< z_1^*(\xi,y),\,\,\text{for any}\,\, \xi\in B_{\td{\C}},\,\,y\in\mathbb{S}^{n-1}.
\ee
Notice that
$$z_i^*(\xi,y)=z_0^*(|\xi|)-p_i(\tilde{\C}y)\cdot\xi-\varphi(\tilde{\C}y)+p_i(\tilde{\C}y)\cdot \tilde{\C}y.$$
Therefore, let $\tilde{\C}y=\xi_0$ and $\xi\rightarrow\xi_0$, we get
$$z_i(\xi, \tilde{\C}^{-1}\xi_0)-z_0^*(|\xi|)\rightarrow -\varphi(\xi_0).$$
This together with \eqref{pfth4.1} yields \eqref{85}.
\end{proof}

Now we let
$$\T=\xi_i\frac{\p}{\p \xi_j}-\xi_j\frac{\p}{\p \xi_i}$$ be the angular derivative. Similar to Section 10 in \cite{RWX}, we obtain
following Lemmas.
\begin{lemm}
\label{lemm17} Let $\us$ be the solution of equation \eqref{pfth4.2}.
Then, $|\T \us|$ are bounded above by a constant depends on $|\vp|_{C^1}$ and $\T^2\us$ are bounded above by a constant depends on $|\vp|_{C^2}$.
\end{lemm}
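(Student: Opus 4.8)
The plan is to use the rotational symmetry of equation \eqref{pfth4.2} together with the comparison principle, following Section~10 of \cite{RWX}. Fix a coordinate $2$-plane in $\R^n$ and let $R_\theta$ be the rotation by angle $\theta$ in that plane, so that the angular derivative is $\T\us=\tfrac{d}{d\theta}\big|_{\theta=0}(\us\circ R_\theta)$. The left-hand side of \eqref{pfth4.2} is invariant under $v\mapsto v\circ R_\theta$: indeed $\w=\sqrt{1-|\xi|^2}$ is radial and, since $\gas$ is the square root of $\delta_{ij}-\xi_i\xi_j$, under the rotation the matrix $\w\gas_{ik}v_{kl}\gas_{lj}$ gets conjugated by $R_\theta$ and hence keeps its eigenvalues; the right-hand side $\binom{n}{k}^{-1/k}\big/\big(\C-(1-|\xi|^2)^{-1/2}\big)$ is radial as well. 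Therefore $\us\circ R_\theta$ is again a solution of \eqref{pfth4.2} on $B_{\td{\C}}$, and, because $\td{z}^*_0$ is radial, Lemma~\ref{sub8.4lem1} applied to $\us$ shows that $\us\circ R_\theta-\td{z}^*_0$ extends continuously to $\bar B_{\td{\C}}$ with boundary value $-\vp\circ R_\theta$ (and likewise $\us-\td{z}^*_0$ with boundary value $-\vp$).

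For the bound on $|\T\us|$ I would compare the two solutions $\us$ and $\us\circ R_\theta$ directly. Their difference $\us\circ R_\theta-\us=(\us\circ R_\theta-\td{z}^*_0)-(\us-\td{z}^*_0)$ is continuous on $\bar B_{\td{\C}}$ with boundary value $-(\vp\circ R_\theta-\vp)$, of size at most $C\theta|\vp|_{C^1}$. Subtracting the two (identical) equations and using the mean value theorem writes this difference as a solution of a linear second-order elliptic equation with no lower-order terms; by the maximum principle for the linearized operator of \eqref{soliton} (Remark~\ref{uniqueness rmk}) one gets $|\us\circ R_\theta-\us|\le C\theta|\vp|_{C^1}$ on $B_{\td{\C}}$. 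Dividing by $\theta$ and letting $\theta\to0$ gives $|\T\us|\le C|\vp|_{C^1}$.

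For the upper bound on $\T^2\us$ I would use, in addition, the concavity of $\hF=(\s_n/\s_{n-k})^{1/k}$. Since $v\mapsto\w\gas_{ik}v_{kl}\gas_{lj}$ is linear for fixed $\xi$, the average $w_\theta:=\tfrac12(\us\circ R_\theta+\us\circ R_{-\theta})$ satisfies $\hF(\w\gas_{ik}(w_\theta)_{kl}\gas_{lj})\ge$ the right-hand side of \eqref{pfth4.2}, i.e.\ $w_\theta$ is a subsolution; its boundary value is $\td{z}^*_0-\tfrac12(\vp\circ R_\theta+\vp\circ R_{-\theta})$, which differs from the boundary value $\td{z}^*_0-\vp$ of the solution $\us$ by $-\tfrac12(\vp\circ R_\theta+\vp\circ R_{-\theta}-2\vp)$, of absolute value at most $C\theta^2|\vp|_{C^2}$. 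Subtracting equations and applying the maximum principle again gives $w_\theta\le\us+C\theta^2|\vp|_{C^2}$ on $B_{\td{\C}}$, i.e.\ $\us\circ R_\theta+\us\circ R_{-\theta}-2\us\le C\theta^2|\vp|_{C^2}$; dividing by $\theta^2$ and letting $\theta\to0$ yields $\T^2\us\le C|\vp|_{C^2}$. Concavity produces only this one-sided bound, which is exactly what the lemma asserts.

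The main obstacle is that \eqref{pfth4.2} is degenerate on $\partial B_{\td{\C}}$ — the right-hand side blows up there and $\us$ is itself unbounded — so the comparison principle cannot be invoked naively up to the boundary. The point is that the linear operator arising from the mean value theorem is uniformly elliptic on compact subsets of the \emph{open} ball $B_{\td{\C}}$ and has no zeroth-order term, so the strong maximum principle applies in the interior; hence if $\us\circ R_\theta-\us$ (or $w_\theta-\us$) had an interior extremum exceeding its boundary values it would be constant in $B_{\td{\C}}$, contradicting the continuous boundary values just computed. Equivalently, one may run the whole argument on the approximate problems \eqref{D5} on $B_\tau$, $\tau<\td{\C}$, which are uniformly elliptic up to $\partial B_\tau$ and carry boundary data $u^{n*}+(z_0^k)^*-(z_0^n)^*$ whose angular derivatives are controlled by $|\vp|_{C^1}$ and $|\vp|_{C^2}$ — since $(z_0^k)^*$ and $(z_0^n)^*$ are radial and $u^{n*}$ satisfies the analogous estimates, obtained by the same rotation argument in the simpler case $k=n$ where the radial reduction $(z_0^n)^*$ is bounded — thereby obtaining bounds independent of $\tau$ and then passing to the limit $u^{\tau*}\to\us$ from Proposition~\ref{es-th1}. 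Verifying that these boundary discrepancies are genuinely $O(\theta)$ and $O(\theta^2)$ uniformly up to (or in the limit at) the degenerate sphere is the delicate part.
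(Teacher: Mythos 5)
Your argument is correct and is essentially the same as the paper's: the paper differentiates equation \eqref{pfth4.2} once and twice along the angular direction $\T$ (following Lemma 29 and 30 in \cite{RWX}) to conclude that $\T(\us-\tilde{z}^*_0)$ solves a homogeneous linear elliptic equation and $\T^2(\us-\tilde{z}^*_0)$ is a subsolution (concavity), then applies the maximum principle with boundary data $-\T\vp$ and $-\T^2\vp$ from Lemma \ref{sub8.4lem1}. Your finite-rotation formulation (comparing $\us\circ R_\theta$ with $\us$, and the average $w_\theta$ with $\us$, then dividing by $\theta$ resp.\ $\theta^2$) is the discrete version of that same argument, using the same three ingredients -- rotational invariance of the equation, concavity of $\hF$, and the boundary limit from Lemma \ref{sub8.4lem1} -- and your remark on running the argument on the nondegenerate approximate problems \eqref{D5} on $B_\tau$ and passing to the limit is a valid way to handle the boundary degeneracy that the paper leaves implicit.
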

\begin{proof}
Notice that $\p |\xi|^2=0$, we have the angular derivative of the right hand side of equation \eqref{pfth4.2}
is zero. Therefore, following the proof of Lemma 29 and 30 in \cite{RWX}, we have
$$F^{ij}\w\gas_{ik}(\p(\us-\tilde{z}^*_0))_{kl}\gas_{lj}=0, F^{ij}\w\gas_{ik}(\p^2(\us-\tilde{z}^*_0))_{kl}\gas_{lj}\geq 0.$$
In view of \eqref{85} and the maximum principle, we obtain the desired estimates.
\end{proof}
We further have
\begin{lemm}
\label{cvlem1.3}
Let $\us$ be the solution of equation \eqref{pfth4.2}.
There is a positive constant $b$ such that
\[\sqrt{\tilde{\C}^2-|\xi|^2}\lt|\T^2\us\rt|<b.\]
\end{lemm}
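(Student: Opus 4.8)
\emph{Proof proposal.} The upper bound $\T^2\us\le C(|\vp|_{C^2})$ is already supplied by Lemma \ref{lemm17}, so the whole task is the matching lower bound $\T^2\us\ge -b/\sqrt{\td\C^2-|\xi|^2}$. Since $\td z^*_0$ is radial, $\T^2\us=\T^2(\us-\td z^*_0)$, and by Lemma \ref{sub8.4lem1} the renormalized function $h:=\us-\td z^*_0$ extends continuously to $\overline{B_{\td\C}}$ with $h|_{\p B_{\td\C}}=-\vp$; this is the object the maximum principle will see. Exactly as in the proof of Lemma \ref{lemm17}, because the right-hand side of \eqref{pfth4.2} depends only on $|\xi|$ all its angular derivatives vanish, so differentiating \eqref{pfth4.2} twice along an angular field $\T$ and using the concavity of $\hF$ gives
\[
L(\T^2 h)\ \ge\ 0,\qquad L\phi:=\hF^{ij}\w\gas_{ik}\phi_{kl}\gas_{lj},
\]
an operator with no zeroth order term; moreover the term $-\hF^{pq,rs}(\T\Lambda)_{pq}(\T\Lambda)_{rs}\ge 0$ (with $\Lambda_{ij}=\w\gas_{ik}\us_{kl}\gas_{lj}$) survives on the right and will be kept.

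First I would record the quantitative facts needed near $\p B_{\td\C}$. Put $W(\xi):=\sqrt{\td\C^2-|\xi|^2}$. Using $\gas_{ik}\gas_{kj}=g_{ij}=\delta_{ij}-\xi_i\xi_j\ge(1-\td\C^2)I$ on $B_{\td\C}$, $\w=\sqrt{1-|\xi|^2}\ge 1/\C$, and $\sum_i\hF^{ii}\ge\hF(1,\dots,1)=\binom nk^{-1/k}$ (homogeneity and concavity, exactly as in \eqref{3.21}), a direct computation of $DW,D^2W$ yields $L(W)\le-\mu/W$ and $L(W^{-1})\ge\mu'/W^3$ for positive constants $\mu,\mu'$ depending only on $n,k,\C$. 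Next I would use the convexity of $\us$ together with the explicit shape of an angular second derivative: after a rotation putting the base point on the $\xi_1$-axis, $\T^2\us=|\xi|^2\us_{jj}-|\xi|\,\us_1$, i.e. $\T^2\us$ equals $|\xi|^2$ times the Hessian of $\us$ restricted to the sphere $\{|\xi|=\mathrm{const}\}$, so the desired lower bound is equivalent to a lower bound on the tangential Hessian of $\us$ on these spheres; the radial model $\td z^*_0$, whose tangential Hessian is $(\td z^*_0)'(|\xi|)/|\xi|\sim W^{-2}$, provides the expected leading behaviour, the discrepancy being governed by $h+\vp(\td\C\xi/|\xi|)$, which $\to 0$ on $\p B_{\td\C}$ by Lemma \ref{sub8.4lem1}.

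With these in hand I would run a barrier/maximum-principle comparison on the annulus $\{\td\C-\delta_0<|\xi|<\td\C\}$ --- on the complementary compact set the bound follows from Lemma \ref{lemm17}, Lemma \ref{lc2lem1} and interior estimates, the weight being bounded below there. One compares the worst-direction quantity $\T^2 h$ against an explicit function built from $W$, $W^{-1}$ and $h+\vp(\td\C\xi/|\xi|)$, chosen so that its $L$-image has a sign incompatible with an interior extremum that would violate $W\,\T^2 h\ge -b$; at such a putative extremum one substitutes the first-order relation $D(\T^2 h)=(\text{linear combination of }DW,\ DW^{-1},\ Dh)$ coming from the vanishing of the gradient, and then $L(\T^2 h)\ge 0$, the bounds $L(W)\le-\mu/W$, $L(W^{-1})\ge\mu'/W^3$, $\sum\hF^{ii}\ge\binom nk^{-1/k}$, and the retained concavity term $-\hF^{pq,rs}(\T\Lambda)_{pq}(\T\Lambda)_{rs}\ge 0$ produce a contradiction for a suitable choice of coefficients; letting the coefficient of $W^{-1}$ tend to $0$ gives $W\,\T^2\us\ge -b$. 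This is modeled on Section 10 of \cite{RWX}.

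The genuine difficulty, and the step I would be most careful with, is controlling the cross terms $\hF^{ij}\w\gas_{ik}W_k(\T^2\us)_l\gas_{lj}$ and $\hF^{ij}\w\gas_{ik}(h+\vp)_k(\T^2\us)_l\gas_{lj}$ that appear when $L$ is applied to the products in the barrier: these pair first derivatives of the weights with third derivatives of $\us$, which are not among the already estimated quantities, and they can only be absorbed by Cauchy--Schwarz into $W\,L(\T^2\us)$ (through the kept concavity term) and into the strongly positive $\mu'/W^3$ coming from $L(W^{-1})$. It is precisely the $W^{-2}$ size of the tangential Hessian of the radial model $\td z^*_0$ that forces the weight to be $\sqrt{\td\C^2-|\xi|^2}$ and no smaller power, and getting this bookkeeping exactly right is where I would lean hardest on the computations of \cite{RWX}.
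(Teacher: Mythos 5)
Your opening reduction is exactly the paper's: pass to $h=\us-\tilde z_0^*$, note that $\T^2\us=\T^2 h$ because $\tilde z_0^*$ is radial, and use Lemma \ref{sub8.4lem1} so that $h$ has a $C^0$ bound and controlled boundary values. That is all the paper's proof does explicitly --- after this reduction it simply defers to Lemma~5.3 of Li \cite{LA}. You also correctly observe that Lemma \ref{lemm17} already gives the one-sided bound $\T^2\us\le C$, so the content of the present lemma is the weighted lower bound near $\p B_{\td\C}$.

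The rest of your argument, though, is a plan rather than a proof, and the plan has a genuine soft spot that you flag but do not resolve. The inequality $L(\T^2 h)\ge 0$ is a \emph{subsolution} inequality: via the maximum principle it controls the maximum of $\T^2 h$, but it gives no direct grip on the minimum, which is exactly what the lemma is about. Adding $b\,W^{-1}$ (whose $L$-image you show is strictly positive) only strengthens the subsolution property, so it again controls the maximum and not the minimum; a naive test function such as $W\,\T^2 h$ has, at an interior minimum and after using the critical first-order relations, an $L$-image whose sign is \emph{consistent} with a minimum rather than contradictory. Turning this around requires a Pogorelov-type computation in which the retained concavity term $-\hF^{pq,rs}(\T\Lambda)_{pq}(\T\Lambda)_{rs}$ is converted, through the vanishing of the gradient and Cauchy--Schwarz, into a coercive quantity dominating the cross terms $\hF^{ij}\w\gas_{ik}W_k(\T^2 h)_l\gas_{lj}$ --- and this computation, which is the real substance of Li's Lemma~5.3, is precisely the step you label as ``the genuine difficulty'' and defer to ``bookkeeping.'' Until that calculation is actually carried out, the lower bound $\T^2 h\ge -b/W$ has not been established.
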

\begin{proof}
We consider $\us-\tilde{z}_0^*$, which has $C^0$ bound on $B_{\tilde{\C}}$. Since $\T^2 \us=\T^2 (\us-\tilde{z}_0^*)$, the rest of the proof is same as the one of Lemma 5.3 in \cite{LA}.
\end{proof}

\begin{lemm}
\label{lemm19}
Suppose $a_0<r<\td{\C}$ for some $a_0\in (0, \td{\C}),$ and $\mathbb{S}^{n-1}(r)=\{\xi\in\R^n|\sum\xi_i^2=r^2\}.$
For any point $\hat{\xi}\in\mathbb{S}^{n-1}(r),$ there is a function
\[\uus_0=z_0^*+b_1\xi_1+ \cdots +b_n\xi_n+b\]
such that
\[\uus_0(\hat{\xi})=\us(\hat{\xi}),\]
and\[\uus_0(\hat{\xi})>\us(\xi),\,\,\mbox{for any $\xi\in\mathbb{S}^{n-1}(r)\setminus\{\hat{\xi}\}$}.\]
Here $b_1, \cdots, b_n$ are constants depending on $\hat{\xi},$ and $b$ is a positive constant independent of $\hat{\xi}$
and $r.$
\end{lemm}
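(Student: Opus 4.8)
The plan is to take $\uus_0$ to be the radial profile $z_0^{*}$ plus an affine function whose linear part is adapted to the contact point $\hat{\xi}$ while its constant term $b$ is a single fixed large number. The point is that an affine function which is very negative at one point of a sphere is forced to curve back up quadratically in the chordal variable $|\xi-\hat{\xi}|^{2}$, and this built-in quadratic slack will dominate the bounded, semiconcave non-radial part of $\us$.

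\emph{Step 1: reduction and uniform bounds.} Put $h:=\us-z_0^{*}(|\xi|)$ on $B_{\td{\C}}$. Since $\uus_0-\us=\big(z_0^{*}+(\text{affine})\big)-\us$, constructing $\uus_0$ is the same as producing an affine $L(\xi)=\langle\beta,\xi\rangle+b$ with $L\ge h$ on $\mathbb{S}^{n-1}(r)$ and $L(\hat{\xi})=h(\hat{\xi})$. By \eqref{85} (Lemma \ref{sub8.4lem1}) together with $z_0^{*}-\td{z}_0^{*}=O(1/r(\tau))\to0$ as $|\xi|\to\td{\C}$, the function $h$ extends continuously to $\overline{B_{\td{\C}}}$ with boundary value $-\vp$; hence $|h|\le M_0$ on $\{a_0\le|\xi|\le\td{\C}\}$ with $M_0=M_0(|\vp|_{C^{0}})$. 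Because $z_0^{*}$ is radial, $\T h=\T\us$ and $\T^{2}h=\T^{2}\us$ for every angular field $\T=\xi_i\p_j-\xi_j\p_i$; and since the right-hand side of \eqref{pfth4.2} depends only on $|\xi|$, Lemma \ref{lemm17} applies verbatim in every rotated frame with bounds depending only on $|\vp|_{C^{1}},|\vp|_{C^{2}}$. Reading these bounds along great circles of $\mathbb{S}^{n-1}(r)$, and using $r\ge a_0$, gives, uniformly in $r\in(a_0,\td{\C})$ and $\hat{\xi}\in\mathbb{S}^{n-1}(r)$,
\[|\nabla_{S}h|\le C,\qquad \mathrm{Hess}_{S}\,h\le C\,\mathrm{Id},\qquad C=C(|\vp|_{C^{2}},a_0),\]
where $\nabla_{S},\mathrm{Hess}_{S}$ are the intrinsic gradient and Hessian on $\mathbb{S}^{n-1}(r)$.

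\emph{Step 2: the barrier and its verification.} Fix $\hat{\xi}$, set $r=|\hat{\xi}|$ and $q:=\nabla_{S}h(\hat{\xi})$ (so $\langle q,\hat{\xi}\rangle=0$), let $\Lambda>0$ be chosen below, and define
\[\beta:=\frac{h(\hat{\xi})-\Lambda}{r^{2}}\hat{\xi}+q,\qquad \uus_0(\xi):=z_0^{*}(|\xi|)+\langle\beta,\xi\rangle+\Lambda.\]
Then $\uus_0(\hat{\xi})=z_0^{*}(r)+h(\hat{\xi})=\us(\hat{\xi})$, and on $\mathbb{S}^{n-1}(r)$, using $\langle\xi,\hat{\xi}\rangle=r^{2}-\tfrac12|\xi-\hat{\xi}|^{2}$ and $\langle q,\xi\rangle=\langle q,\xi-\hat{\xi}\rangle$, one gets
\[\uus_0(\xi)-\us(\xi)=\Big[\,h(\hat{\xi})-h(\xi)+\langle q,\xi-\hat{\xi}\rangle\,\Big]+\frac{\Lambda-h(\hat{\xi})}{2r^{2}}|\xi-\hat{\xi}|^{2}.\]
The second-order Taylor expansion of $h$ along the great circle of $\mathbb{S}^{n-1}(r)$ joining $\hat{\xi}$ to $\xi$, together with the elementary comparisons $d_{S}(\xi,\hat{\xi})\le\tfrac{\pi}{2}|\xi-\hat{\xi}|$ and $\langle q,\exp_{\hat{\xi}}^{-1}\xi\rangle=\langle q,\xi-\hat{\xi}\rangle+O(|q|\,|\xi-\hat{\xi}|^{2}/r)$, and the bounds of Step 1, yields $h(\xi)\le h(\hat{\xi})+\langle q,\xi-\hat{\xi}\rangle+C_1|\xi-\hat{\xi}|^{2}$ with $C_1=C_1(|\vp|_{C^{2}},a_0)$. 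Hence the bracketed term is $\ge-C_1|\xi-\hat{\xi}|^{2}$, and since $h(\hat{\xi})\le M_0$ and $r\le\td{\C}$,
\[\uus_0(\xi)-\us(\xi)\ \ge\ \Big(\frac{\Lambda-M_0}{2\td{\C}^{2}}-C_1\Big)|\xi-\hat{\xi}|^{2}.\]
Choosing $\Lambda>M_0+2\td{\C}^{2}C_1$, which depends only on $|\vp|_{C^{2}}$ and $a_0$, makes the coefficient positive, so $\uus_0(\xi)>\us(\xi)$ for every $\xi\in\mathbb{S}^{n-1}(r)\setminus\{\hat{\xi}\}$. This proves the lemma with $b:=\Lambda$ (positive, independent of $\hat{\xi}$ and $r$) and $(b_1,\dots,b_n):=\beta$ (depending on $\hat{\xi}$ and $r$).

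\emph{Expected main obstacle.} The delicate part is Step 1: one must check that the angular first- and second-order estimates of Lemma \ref{lemm17} are genuinely uniform up to $|\xi|=\td{\C}$ and can be upgraded from the coordinate rotation fields $\xi_i\p_j-\xi_j\p_i$ to arbitrary tangent directions on $\mathbb{S}^{n-1}(r)$, so as to produce a true one-sided bound on $\mathrm{Hess}_{S}h$ — this is exactly where the rotational invariance of \eqref{pfth4.2} is used (rotating the great circle through $\hat{\xi}$ and $\xi$ into a coordinate $2$-plane). One must also keep careful track, in Step 2, that passing from the intrinsic Taylor expansion on the sphere to the chordal quantity $|\xi-\hat{\xi}|^{2}$ produces only $O(|\xi-\hat{\xi}|^{2})$ errors with constants uniform in $r$ and $\hat{\xi}$. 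Everything else reduces to the identity $\langle\xi,\hat{\xi}\rangle=r^{2}-\tfrac12|\xi-\hat{\xi}|^{2}$ and the choice of $\Lambda$.
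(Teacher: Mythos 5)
Your proof is correct and is, in substance, the argument the paper outsources to Li's Lemma 5.4 in \cite{LA} (the paper's own proof is a one-line reference with a substitution dictionary). Both rest on the same three ingredients: the uniform $C^0$ bound on $h=\us-z_0^{*}$ coming from Lemma \ref{sub8.4lem1} and $z_0^{*}-\td z_0^{*}\to0$, the one-sided angular semiconcavity supplied by Lemma \ref{lemm17} (made direction-independent via the rotational invariance of \eqref{pfth4.2}), and the chordal identity $\langle\xi,\hat\xi\rangle=r^{2}-\tfrac12|\xi-\hat\xi|^{2}$ which turns the affine tilt into a positive quadratic on the sphere. Your Step 2 computation of $\uus_0-\us$ and the choice $b=\Lambda>M_0+2\td{\C}^{2}C_1$, depending only on $|\vp|_{C^{2}}$ and $a_0$, correctly yield $b$ independent of $\hat\xi$ and $r$.

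Two small points you should state more carefully, though neither is a gap. First, Lemma \ref{lemm17} controls $\T^{2}\us$ where $\T$ is angle-parametrized; passing to the intrinsic arc-length Hessian introduces a factor $1/r^{2}$, so the bound $\mathrm{Hess}_{S}h\le C\,\mathrm{Id}$ has $C=C(|\vp|_{C^{2}})/a_0^{2}$ — which is why the restriction $r>a_0$ in the statement is genuinely needed, and your constant $C_1$ inherits this $a_0$-dependence. Second, the estimate $\langle q,\exp^{-1}_{\hat\xi}\xi\rangle-\langle q,\xi-\hat\xi\rangle=O(|q|\,|\xi-\hat\xi|^{2}/r)$ needs to be checked not only for small $\theta$ but all the way to $\theta=\pi$; a short computation shows $(\tfrac{\theta}{\sin\theta}-1)\langle q,\xi-\hat\xi\rangle$ has modulus $(\theta-\sin\theta)|q|r$, and $\frac{\theta-\sin\theta}{\sin^{2}(\theta/2)}$ is bounded on $(0,\pi]$, so the claim does hold uniformly, but it is worth recording since $\tfrac{\theta}{\sin\theta}$ itself diverges at $\theta=\pi$.
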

\begin{proof}
The proof is almost the same as the proof of Lemma 5.4 in \cite{LA}. We only need to replace
$u,\uu,-\bar{k}\sqrt{1-|x|^2}$ by $\us-\tilde{z}_0^*, \uus_0-\tilde{z}_0^*$ and $z_0^*-\tilde{z}_0^*$ in Li's proof.
\end{proof}

Similarly, we can prove the following Lemma analogous to Lemma 5.5 in \cite{LA}.
\begin{lemm}
\label{lemm20}
Suppose $a_0<r<\td{\C}$ for some $a_0\in (0, \td{\C}),$ and $\mathbb{S}^{n-1}(r)=\{\xi\in\R^n|\sum\xi_i^2=r^2\}.$
For any point $\hat{\xi}\in \mathbb{S}^{n-1}(r),$ there is a function
\[\lus_0=z_0^*+a_1\xi_1+ \cdots +a_n\xi_n-a\]
such that
\[\lus_0(\hat{\xi})=\us(\hat{\xi}),\]
and\[\lus_0(\hat{\xi})<\us(\xi),\,\,\mbox{for any $\xi\in\mathbb{S}^{n-1}(r)\setminus\{\hat{\xi}\}$}.\]
Here $a_1, \cdots, a_n, a$ are constants depending on $\hat{\xi},$ and $a>0, a\sqrt{\tilde{\C}^2-|\hat{\xi}|^2}<C_1$, where $C_1$ is a positive constant only depending on $|\varphi|_{C^2}$.
\end{lemm}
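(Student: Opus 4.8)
The plan is to construct $\lus_0$ by hand, running the argument of Lemma 5.5 in \cite{LA} relative to the radial profile $z_0^*$ instead of a constant. The first move is to reduce to an affine problem on the sphere. Put $w:=\us-z_0^*$; since $z_0^*$ is radial, on $\mathbb{S}^{n-1}(r)$ it is a constant, so it suffices to find an affine function $\ell(\xi)=a_1\xi_1+\cdots+a_n\xi_n-a$ with $\ell\leq w$ on $\mathbb{S}^{n-1}(r)$ and $\ell(\hat\xi)=w(\hat\xi)$, and then set $\lus_0:=z_0^*+\ell$. The inputs are already in hand: $|\T w|=|\T\us|\leq C(|\vp|_{C^1})$ and $\sqrt{\td\C^2-|\xi|^2}\,|\T^2 w|=\sqrt{\td\C^2-|\xi|^2}\,|\T^2\us|\leq b$ from Lemma \ref{lemm17} and Lemma \ref{cvlem1.3}, together with a bound $|w|\leq C$ on $\{a_0\leq|\xi|<\td\C\}$ with $C=C(|\vp|_{C^2})$, coming from the barriers $q_1^k\leq u\leq q_2^k$ of Lemma \ref{lem7.2.1} and the asymptotics of $z_0^*$ obtained in Section \ref{rs}.

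Next I would fix $\ell$. Write $a=(a_1,\dots,a_n)=\tfrac{\mu}{r}\hat\xi+\tau$ with $\tau\perp\hat\xi$, and choose $\tau$ to be the tangential gradient of $w$ along $\mathbb{S}^{n-1}(r)$ at $\hat\xi$. Using $\hat\xi\cdot\xi=r^2-\tfrac12|\xi-\hat\xi|^2$ on $\mathbb{S}^{n-1}(r)$ one gets
\[
\ell(\xi)-\ell(\hat\xi)=-\frac{\mu}{2r}\,|\xi-\hat\xi|^2+\langle\tau,\xi-\hat\xi\rangle,\qquad \ell(\hat\xi)=\mu r-a.
\]
A second order Taylor expansion of $w$ along the great circles of $\mathbb{S}^{n-1}(r)$ issuing from $\hat\xi$, with the second tangential derivatives controlled by Lemma \ref{cvlem1.3} and the $O(|\xi-\hat\xi|^2)$ discrepancy between the chord $\xi-\hat\xi$ and the geodesic initial vector absorbed into the remainder, yields
\[
w(\xi)-w(\hat\xi)-\langle\tau,\xi-\hat\xi\rangle\ \geq\ -\frac{C_2}{\sqrt{\td\C^2-r^2}}\,|\xi-\hat\xi|^2
\]
with $C_2=C_2(b,|\vp|_{C^1})$. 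Taking $\mu$ a little larger than $\dfrac{2rC_2}{\sqrt{\td\C^2-r^2}}$, plus a fixed multiple of $a_0^{-1}(\sup|w|+1)$ so that $a=\mu r-w(\hat\xi)>0$ (which is legitimate because $r>a_0$), forces $\ell<w$ on $\mathbb{S}^{n-1}(r)\setminus\{\hat\xi\}$ and $\ell(\hat\xi)=w(\hat\xi)$. Finally
\[
a\sqrt{\td\C^2-|\hat\xi|^2}=\big(\mu r-w(\hat\xi)\big)\sqrt{\td\C^2-r^2}\ \leq\ 2\td\C^2C_2+\sqrt{\td\C^2-r^2}\,\Big(r+a_0^{-1}\td\C(\sup|w|+1)+\sup|w|\Big),
\]
which is at most a constant $C_1=C_1(|\vp|_{C^2},a_0)$, giving the asserted bound.

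The step I expect to be the main obstacle is the singular-weight Taylor estimate: one has to verify that the bound of Lemma \ref{cvlem1.3} enters the remainder of $w$ on $\mathbb{S}^{n-1}(r)$ precisely with the weight $(\td\C^2-r^2)^{-1/2}$, and that the constant $C_1$ produced this way — which must simultaneously absorb the $C^0$ control of $w$ needed to keep $a>0$ — does not secretly depend on $r\in(a_0,\td\C)$. Once that is pinned down, the remaining checks (that $\lus_0$ has the required form, touches $\us$ from below on $\mathbb{S}^{n-1}(r)$ only at $\hat\xi$, and that $a>0$) are the same bookkeeping as in the proof of Lemma 5.5 of \cite{LA}, with $\us-\td z_0^*$, $\lus_0-\td z_0^*$, and $z_0^*-\td z_0^*$ playing the roles of $u$, $\ubar{u}$, and $-\bar k\sqrt{1-|x|^2}$ there.
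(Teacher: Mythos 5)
Your proposal is correct and follows essentially the same route the paper takes (namely Li's Lemma 5.5 argument with $\us-\tilde z_0^*$, $z_0^*-\tilde z_0^*$ playing the roles of $u$, $-\bar k\sqrt{1-|x|^2}$): you reduce to finding an affine support function for $w=\us-z_0^*$ on $\mathbb{S}^{n-1}(r)$, and the bounds you invoke ($|\T w|=|\T\us|$ from Lemma \ref{lemm17}, $\sqrt{\td\C^2-|\xi|^2}|\T^2 w|=\sqrt{\td\C^2-|\xi|^2}|\T^2\us|\leq b$ from Lemma \ref{cvlem1.3}, and the $C^0$ bound on $w$ from the barriers and the $O(1/r)$ tail of $z_0^*-\tilde z_0^*$) are exactly what make the great-circle Taylor expansion close. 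The one small improvement I'd suggest: the second arc-length derivative along a great circle of $\mathbb{S}^{n-1}(r)$ is precisely a component of $\T^2 w$ (no chord/geodesic discrepancy to absorb), and with slightly cleaner bookkeeping for the ``positivity of $a$'' correction to $\mu$ one can remove the $a_0$-dependence from the final constant, matching the lemma's claim that $C_1$ depends only on $|\vp|_{C^2}$.
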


Using Lemma \ref{lemm19} and Lemma \ref{lemm20} we can show
\begin{lemm}
\label{lemm21}
Let $u$ be the solution of equation \eqref{soliton} and $u^*$ be the Legendre transform of $u$.
There are positive constants $d_2>d_1$ such that
\be\label{cv1.17}
0<d_1\leq u(\tilde{\C}^2-|Du|^2)\leq d_2.
\ee
Here $d_2$  depends on the $|u|_{C^0(\Omega)}$ and $\Omega=\{x\in\mathbb{R}^n; |Du|\leq a_0\}.$
\end{lemm}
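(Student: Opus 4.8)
The plan is to obtain the two-sided bound in \eqref{cv1.17} by translating the geometric statements of Lemma \ref{lemm19} and Lemma \ref{lemm20} back through the Legendre transform. Recall that if $\us$ is the Legendre transform of $u$, then for $\xi = Du(x)$ one has $u^*(\xi) = x\cdot\xi - u(x)$ and $x = Du^*(\xi)$, and the spacelike-type quantity we wish to control can be rewritten: since $|Du(x)| = |\xi|$, we need $u(x)(\td\C^2 - |\xi|^2)$ to lie between two positive constants. Equivalently, working on the dual side, the quantity to bound is $u(x)\sqrt{\td\C^2-|\xi|^2}$ type expressions together with the already-established fact (Lemma \ref{sub8.4lem1} and the barriers $q_1, q_2$) that $u$ itself is comparable, up to additive $\log$ and bounded terms, to $\td\C|x|$ — hence $u(x)\to\infty$ precisely as $|\xi|\to\td\C$, and on the compact region $\{|Du|\le a_0\}$ the function $u$ is bounded.

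First I would dispose of the region where $|Du(x)| = |\xi| \le a_0$: there $\td\C^2 - |\xi|^2 \ge \td\C^2 - a_0^2 > 0$ is bounded below, and $u(x)$ is bounded above by $|u|_{C^0(\Omega)}$ with $\Omega = \{|Du|\le a_0\}$, giving the upper bound $d_2$ immediately there; the lower bound $d_1$ on this region follows because $u$ is bounded below (by the subsolution barrier $q_1$, which is a proper convex function, so $u \ge \min_{\R^n} q_1 > 0$ after normalizing, and $\td\C^2 - |\xi|^2 \le \td\C^2$). Then for $|\xi| > a_0$ I would use Lemma \ref{lemm20}: fix $x$ with $\xi = Du(x)$, set $\hat\xi = \xi$, $r = |\xi|$, and use the supporting function $\lus_0 = z_0^* + a_1\xi_1 + \cdots + a_n\xi_n - a$ from that lemma, which touches $\us$ from below at $\hat\xi$ along the sphere $\mathbb{S}^{n-1}(r)$ with $a\sqrt{\td\C^2 - |\hat\xi|^2} < C_1$. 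Because $\us - \lus_0$ has an interior minimum on $B_r$ at a point where its gradient vanishes (using that $\us \ge \lus_0$ globally on $B_{\td\C}$ can be arranged by the construction, or at least at $\hat\xi$ it is tangent), one reads off a bound relating $Du^*(\hat\xi) = x$, the affine coefficients $a_i$, and $\partial_r z_0^*$; combined with $a\sqrt{\td\C^2-|\hat\xi|^2}<C_1$ and the known asymptotics $z_0^*(\tau) = \C^{-2}\sqrt[k]{(n-k)/n}(\log r - 1) + O(1/r)$ this yields an upper bound for $u(x)\sqrt{\td\C^2-|\xi|^2}$, and squaring (or rather pairing it with a matching lower estimate) gives the upper bound for $u(\td\C^2-|Du|^2)$. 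Symmetrically, Lemma \ref{lemm19}, whose supporting function $\uus_0 = z_0^* + b_1\xi_1 + \cdots + b_n\xi_n + b$ touches $\us$ from above with $b$ a positive constant \emph{independent} of $\hat\xi$ and $r$, produces the lower bound $d_1$.

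The technical heart of the argument is the dictionary between "$\us$ is squeezed between affine-plus-$z_0^*$ barriers on each sphere $\mathbb{S}^{n-1}(r)$" and "$u(\td\C^2 - |Du|^2)$ is pinched": concretely, if $\uus_0 \ge \us$ on $\mathbb{S}^{n-1}(r)$ with equality at $\hat\xi$, then comparing radial derivatives at $\hat\xi$ forces $\partial_r \us(\hat\xi) \le \partial_r z_0^* (|\hat\xi|) = \C^{-2}\sqrt[k]{(n-k)/n}\,/\,(\text{something} \sim \td\C^2 - |\hat\xi|^2)^{1/2}$ up to the known asymptotic remainder, and since $\partial_r\us$ is essentially $-u(x)$ evaluated radially (via $\us(\xi) = x\cdot\xi - u(x)$ and $x = Du^*(\xi)$, the radial derivative of $\us$ along $\xi/|\xi|$ equals $x\cdot\xi/|\xi|$, whose size is tied to $u$ through the convexity inequality $x\cdot\xi - u(x) = \us(\xi) \le \sup_{\partial B_{\td\C}}(\text{barrier})$). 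I expect the main obstacle to be making this last comparison quantitative and uniform: one must extract from the single-sphere touching condition a genuine bound on $u(x)(\td\C^2 - |Du(x)|^2)$ rather than merely on $\us$ or on $\partial_r \us$, which requires carefully tracking how $z_0^*$ blows up near $\partial B_{\td\C}$ — precisely the computation already packaged in Lemma \ref{cvlem1.3} and in the asymptotic expansion of Section \ref{rs}. Once that blow-up rate is pinned down, the upper bound $d_2$ depends only on $|u|_{C^0(\Omega)}$ and $\Omega = \{|Du|\le a_0\}$ as claimed, and $d_1$ comes out as an absolute constant from the $\hat\xi$-independence of $b$ in Lemma \ref{lemm19}.
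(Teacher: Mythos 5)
Your high-level plan matches the paper's: on the compact region $\{|Du|\le a_0\}$ the bound is immediate from $\td\C^2-|Du|^2\ge\td\C^2-a_0^2$ together with the $C^0$ bound on $u$, and on the complementary region you invoke the touching barriers of Lemma~\ref{lemm19} (for the lower bound, using the $\hat\xi$-independence of $b$) and Lemma~\ref{lemm20} (for the upper bound, using $a\sqrt{\td\C^2-|\hat\xi|^2}<C_1$). The gap is in how you propose to extract the estimate from the touching condition. The mechanism the paper uses, and which your sketch misses, is to compare the full Legendre combination $\hat\xi\cdot D\us(\hat\xi)-\us(\hat\xi)$ with $\hat\xi\cdot D\uus_0(\hat\xi)-\uus_0(\hat\xi)$: the former is exactly $u(\hat x)$, and in the latter the affine part $\sum_i b_i\xi_i$ cancels, leaving $\hat\xi\cdot Dz_0^*(\hat\xi)-z_0^*(\hat\xi)-b=z_0(\hat r)-b$, where $\hat r$ is determined by $z_0'(\hat r)=|\hat\xi|$. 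Since $\us<\uus_0$ in $B_r$ with equality at $\hat\xi\in\p B_r$, the point $\hat\xi$ is a boundary maximum of $\us-\uus_0$, so the outward radial derivative there satisfies $\hat\xi\cdot D\us(\hat\xi)\ge\hat\xi\cdot D\uus_0(\hat\xi)$, and therefore $u(\hat x)\ge z_0(\hat r)-b$. Multiplying by $\td\C^2-|Du(\hat x)|^2=\td\C^2-|\hat\xi|^2=\td\C^2-(z_0'(\hat r))^2$ reduces everything to the explicit asymptotic $z_0(\hat r)\bigl(\td\C^2-(z_0'(\hat r))^2\bigr)\to 2\td\C^2\C^{-2}\sqrt[k]{(n-k)/n}$ coming from Proposition~\ref{prop13}, while the barrier errors $b(\td\C^2-|\hat\xi|^2)$ and $a(\td\C^2-|\hat\xi|^2)$ stay bounded by the stated properties of $b$ and $a$.

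Your version of this step does not close as written. You assert $\p_r\us(\hat\xi)\le\p_r z_0^*(|\hat\xi|)$, which is wrong in two ways: the sign should be $\ge$ (from the boundary max of $\us-\uus_0$ at $\hat\xi$, compared with $\p_r\uus_0$), and the comparison cannot be with $\p_r z_0^*$ alone because the affine coefficients $b_i$ contribute to the radial derivative and only drop out of the combination $\hat\xi\cdot D(\cdot)-(\cdot)$. You also state ``$\p_r\us$ is essentially $-u(x)$ evaluated radially,'' but the actual identity is $u(x)=|\xi|\,\p_r\us(\xi)-\us(\xi)$, so the radial derivative alone does not determine $u$. These are not cosmetic: without the affine-term cancellation in the Legendre combination and the exact finite limit $z_0\bigl(\td\C^2-(z_0')^2\bigr)\to 2\td\C^2\C^{-2}\sqrt[k]{(n-k)/n}$, the pinching does not follow from the barriers, as you yourself flag when you describe ``making this last comparison quantitative and uniform'' as the main remaining obstacle.
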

\begin{proof}
We modify the proof of Li \cite{LA}. We first consider the lower bound. For any $\hat{\xi}\in\mathbb{S}^{n-1}(r)$, using  Lemma \ref{lemm19}, we have
$$\us(\hat{\xi})=\uus_0(\hat{\xi}),\text{ and } \us(\xi)<\uus_0(\xi) \text{ for } \xi\in \mathbb{S}^{n-1}(r)\setminus\{\hat{\xi}\}.$$
Thus, using $\uus_0$ is a supersolution, we get $\us(\xi)<\uus_0(\xi)$ in $B_{r}$. Therefore, at $\hat{\xi}$, we get
$$u(\hat{x})=\hat{\xi}\cdot Du^*-u^*>\hat{\xi}\cdot D\uus_0-\uus_0=z_0(\hat{r})-b,$$
where we assume $\hat{x}=Du^*(\hat{\xi})$ and $z_0'(\hat{r}):=\frac{\p z_0}{\p r}(\hat{r})=|\hat{\xi}|$. Thus, at $\hat{x}$, we have
\begin{eqnarray}\label{88}
u(\tilde{\C}^2-|Du|^2)>z_0(\hat{r})(\tilde{\C}^2-|z_0'(\hat{r})|^2)-b(\tilde{\C}^2-|\hat{\xi}|^2).
\end{eqnarray}
Using the asymptotic behavior of $z_0$, we have
\begin{eqnarray}
&& z_0\lt(\tilde{\C}^2-|z'_0|^2\rt)\nonumber\\
&=&\left[\tilde{\C}r-\frac{1}{\C^2}\sqrt[k]{\frac{n-k}{n}}\log r+O\left(\frac{1}{r}\right)\right]
\left[\tilde{\C}^2-\left(\tilde{\C}-\frac{1}{\C^2}\sqrt[k]{\frac{n-k}{n}}\frac{1}{r}+O\left(\frac{1}{r^2}\right)\right)^2\right]
\nonumber\\
&=&2\frac{\tilde{\C^2}}{\C^2}\sqrt[k]{\frac{n-k}{n}}+o(1)\nonumber
\end{eqnarray}
We denote $$2c_0=2\frac{\tilde{\C^2}}{\C^2}\sqrt[k]{\frac{n-k}{n}}.$$ Therefore, by \eqref{88}, we obtain
$$u(\tilde{\C}^2-|Du|^2)>\frac{c_0}{2},$$ for $r$ being sufficiently close to $\td{\C}$, which we may assume $r>a_0$. For $r<a_0$, without loss of generality, we can assume $u\geq 1$. Therefore
$$u(\tilde{\C}^2-|\hat{\xi}|^2)\geq \tilde{\C}^2-a_0^2.$$ Thus, we obtain the uniform lower bound. For the upper bound. Applying a similar argument, for $r$ being sufficiently close to $\td{\C}$ , which we will still assume $r\geq a_0$, we have
\begin{eqnarray}
u(\tilde{\C}^2-|Du|^2)<z_0(\hat{r})(\tilde{\C}^2-|z_0'(\hat{r})|^2)+a(\tilde{\C}^2-|\hat{\xi}|^2)\leq 3c_0+C_1\tilde{\C}.\nonumber
\end{eqnarray}
We obtain a uniform upper bound.
\end{proof}
Finally, we are ready to adapt the ideas in \cite{RWX, LA} to estimate the principal curvatures of $\M_u.$
\begin{prop}
\label{ubprop1}
Let $u$ be the solution of equation \eqref{soliton}.
Then the hypersurface $\M_u=\{(x, u(x)) |\,x\in\R^n\}$ has bounded principal curvatures.
\end{prop}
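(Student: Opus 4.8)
The plan is to bound $\kappa_{\max}$, the largest principal curvature of $\M_u$, by a constant independent of the point; since $\M_u$ is convex this controls all principal curvatures. On any fixed compact subset of $\R^n$ such a bound is already supplied by Lemma \ref{lemm81} and Lemma \ref{lc2lem1} applied to the entire solution $u$, so the only point is uniformity as $|x|\to\infty$. Since $|Du|\to\tilde\C$ in that limit, I would pass to the Legendre transform $\us$, which is defined on $B_{\tilde\C}$, and work near the degenerate boundary $\p B_{\tilde\C}$.

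First I would restate the conclusion in terms of $\us$: by the identity $(W^{-1})_{ij}=\sqrt{1-|\xi|^2}\,g_{ik}\us_{kj}$ with $g_{ij}=\delta_{ij}-\xi_i\xi_j$ from Subsection \ref{lt}, and since on $B_{\tilde\C}$ both $\sqrt{1-|\xi|^2}$ and $g$ have eigenvalues pinched between positive constants, the bound $\kappa_{\max}\le C$ is equivalent to a uniform estimate $D^2\us\ge c_0 I$. Near $\p B_{\tilde\C}$ the function $\us$, and with it $D^2\us$, blows up, so what must be excluded is that some eigenvalue of $D^2\us$ collapses to $0$ as $|\xi|\to\tilde\C$.

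The core of the argument is a comparison with the explicit radial profile $z_0$ from Section \ref{rs}. The function $z_0$ solves \eqref{soliton}, so $z_0^*$ solves the same equation \eqref{pfth4.2} as $\us$; by Lemma \ref{sub8.4lem1} (together with the barriers $z_1,z_2$ of Subsection 7.1) the difference $\us-z_0^*$ is bounded in $C^0(B_{\tilde\C})$; and from the asymptotic expansion of Proposition \ref{prop13} one computes that $D^2z_0^*$ is positive definite with every eigenvalue tending to $+\infty$ as $|\xi|\to\tilde\C$ (the radial one like $1/z_0''$, the spherical ones like $|\xi|^{-1}dz_0^*/d|\xi|$). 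Writing $D^2\us=D^2z_0^*+D^2(\us-z_0^*)$, it then suffices to control the second derivatives of the bounded correction $\us-z_0^*$ well enough that $D^2\us\ge\tfrac12 D^2z_0^*\ge c_0I$ near $\p B_{\tilde\C}$, compactness taking care of the interior. The components of $D^2(\us-z_0^*)$ tangent to the spheres $\{|\xi|=r\}$ are controlled by the angular estimates of Lemma \ref{lemm17} and Lemma \ref{cvlem1.3} ($\T\us$, $\T^2\us$ and $\sqrt{\tilde\C^2-|\xi|^2}\,|\T^2\us|$ bounded), sharpened at each sphere by the touching barriers of Lemma \ref{lemm19} and Lemma \ref{lemm20}. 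For the radial component I would subtract the two equations: $\us-z_0^*$ satisfies a linear elliptic equation with \emph{no} inhomogeneous term, since the right-hand side of \eqref{pfth4.2} depends only on $|\xi|$; combining this with the already-controlled tangential components, the barriers, and the two-sided bound $d_1\le u(\tilde\C^2-|Du|^2)\le d_2$ of Lemma \ref{lemm21} — which fixes the precise rate at which $\M_u$ degenerates at infinity — gives the needed estimate. An equivalent route I would keep in reserve is a Pogorelov-type argument for the test function $\log\log P-N\langle\nu,E\rangle$ augmented by a weight built from $\tilde\C^2-|Du|^2$ so that it attains an interior maximum, running the computation of Lemma \ref{lem c2 global} and Lemma \ref{lc2lem1} and absorbing the extra terms via Lemma \ref{lemm21}.

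The hard part is the degeneracy of \eqref{soliton} at infinity: as $|x|\to\infty$ the forcing term $\C+\langle\nu,E\rangle$ tends to $0$, and the linearized operator degenerates in exactly the one direction belonging to the largest curvature radius, so the radial second derivative of $\us$ is not controlled by ellipticity alone. It is precisely here that the sharp decay rate $u(\tilde\C^2-|Du|^2)\approx 1$ from Lemma \ref{lemm21} and the comparison with the explicit model $z_0$ must be used, and getting these two inputs to interact correctly with the angular estimates in the maximum-principle computation is where the real work lies.
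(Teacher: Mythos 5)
Your main route is a genuinely different strategy from the paper's, and its reduction is sound: passing to the Legendre transform, a uniform bound on $\kappa_{\max}$ is equivalent to $D^2\us\geq c_0 I$ on $B_{\td{\C}}$, and the decomposition $D^2\us = D^2 z_0^* + D^2(\us-z_0^*)$ with $D^2 z_0^*$ blowing up at $\partial B_{\td{\C}}$ is a reasonable first step. But there is a concrete gap in exactly the place you flag, and you do not close it. Lemmas \ref{lemm17} and \ref{cvlem1.3} only involve $\T=\xi_i\p_j-\xi_j\p_i$, which is tangent to the spheres $\{|\xi|=\mathrm{const}\}$; they control the spherical block of $D^2(\us-z_0^*)$ but say nothing about the radial--radial entry. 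Your appeal to the homogeneous linear equation for $\us-z_0^*$ does not supply the missing bound: its coefficients $\int_0^1 F^{ij}\bigl(tD^2\us+(1-t)D^2z_0^*\bigr)\,dt$ degenerate near $\partial B_{\td{\C}}$ precisely in the radial direction, and a homogeneous elliptic equation plus $C^0$ bounds does not yield pointwise second-derivative control near a degenerate boundary. If you want to salvage this route, the step you are missing is to feed the tangential estimates back into the fully nonlinear equation \eqref{pfth4.2} itself: the right-hand side blows up like $r=|x|$ as $|\xi|\to\td{\C}$, the spherical curvature radii are pinned to order $r$ by the angular bounds and the barriers, and the algebraic structure of $\sigma_n/\sigma_{n-k}$ then forces the radial curvature radius to be $\gtrsim r$ as well. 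None of that is in your proposal.

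The route you keep in reserve is, in fact, essentially the paper's proof, and it is not optional. The paper works directly on $\M_u$ and runs a Pogorelov-type maximum-principle computation on
$$\phi \;=\; e^{-\frac{s}{s-u}}\,\bigl[u\,(\C+\langle\nu,E\rangle)\bigr]^{-N}\,P_m^{1/m},\qquad P_m=\textstyle\sum_j\kappa_j^m,$$
whose weight $u\,(\C+\langle\nu,E\rangle)$ is exactly the quantity Lemma \ref{lemm21} pins between two positive constants (since $\C+\langle\nu,E\rangle$ is comparable to $\td{\C}^2-|Du|^2$). The cutoff $e^{-s/(s-u)}$ forces an interior maximum on $U_s$; differentiating $\log\phi$ twice and contracting with $\sigma_k^{ii}$, the concavity terms are absorbed by Claim \ref{cvlem1.900} (imported from [LRW]), and the computation closes to $\kappa_1\leq K\,s^2/(s-u)^2$ at the maximum point. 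Lemma \ref{lemm21} then converts this into a uniform bound on $\kappa_1$ over all of $\R^n$. In this scheme, Lemmas \ref{lemm17}--\ref{lemm20} serve only as scaffolding for the two-sided bound of Lemma \ref{lemm21}; the Hessian estimate itself is a maximum-principle argument on the hypersurface, not a comparison of Hessians in the Legendre picture. You correctly identified Lemma \ref{lemm21} as the crucial input, but built the primary argument on the route with the hole and relegated the working route to a parenthetical.
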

\begin{proof}
We will establish a Pogorelov type interior estimate.
For any $s>0,$ consider
$$\phi=e^{-\frac{s}{s-u}}[u(\C+\langle\nu,E\rangle )]^{-N}P_m^{1/m},$$ where $P_m=\sum\limits_{j}\ka_j^m$ and $m, N>0$ are constants to be determined later. Without loss of generality, we also assume $u\geq 1$ in $\R^n$.
 It's easy to see that $\phi$ achieves its local maximum at an interior point of
 $U_s=\{x\in\R^n |\, u(x)<s\},$ we will assume this point is $x_0.$ We can choose a local normal coordinate
 $\{\tau_1, \cdots, \tau_n\}$ such that at $x_0,$
 $h_{ij}=\la_i\delta_{ij}$ and $\la_1\geq\la_2\geq\cdots\geq\la_n.$

Differentiating $\log\phi$ at $x_0$ we get,
\begin{equation}\label{12.2}
\frac{\phi_i}{\phi}=\frac{\dsum_j\kappa_j^{m-1}h_{jji}}{P_m}-N\frac{h_{ii}\langle \tau_i,E\rangle}{\C+\langle\nu,E\rangle}-N\frac{u_i}{u}-\frac{ su_i}{(s-u)^2}=0,
\end{equation}
and
\begin{eqnarray}\label{ub1.0}
&&\frac{\phi_{ii}}{\phi}-\frac{\phi_i^2}{\phi^2}\\
&=&\frac{1}{P_m}\lt[\dsum_j\kappa_j^{m-1}h_{jjii}+(m-1)\dsum_j\kappa_j^{m-2}h_{jji}^2
+\dsum_{p\neq q}\dfrac{\kappa_p^{m-1}-\kappa_q^{m-1}}{\kappa_p-\kappa_q}h_{pqi}^2\rt] \nonumber\\
&&-\dfrac{m}{P_m^2}\lt(\dsum_j\kappa_j^{m-1}h_{jji}\rt)^2 -N\sum_lh_{ili}\frac{\langle \tau_l,E\rangle}{\C+\langle\nu,E\rangle}
+Nh_{ii}^2\frac{-\langle \nu, E\rangle}{\C+\langle \nu, E\rangle}\nonumber\\
&&+Nh_{ii}^2\frac{u_i^2}{(\C+\langle\nu,E\rangle)^2}
+N\frac{h_{ii}\langle\nu,E\rangle}{u}+N\frac{u_i^2}{u^2}+ s\frac{h_{ii}\langle \nu,E\rangle}{(s-u)^2}-2s\frac{u_i^2}{(s-u)^3}\leq 0.\nonumber
\end{eqnarray}
By equation \eqref{soliton}, we derive
$$\sigma_k^{ii}h_{iij}=\binom{n}{k}k(\C+\langle\nu,E\rangle)^{k-1}(-h_{jj}u_j),$$
and
\begin{eqnarray}
\sigma_k^{ii}h_{iijj}&=&-\sigma_k^{pq,rs}h_{pqj}h_{rsj}+\binom{n}{k}k(k-1)(\C+\langle \nu, E\rangle)^{k-2}h_{jj}^2u_j^2\\ &&+\binom{n}{k}k(\C+\langle\nu,E\rangle)^{k-1}\lt(-\sum_lh_{jjl}u_l+h_{jj}^2\langle\nu,E\rangle\rt)\nonumber\\
&\geq&-\sigma_k^{pq,rs}h_{pqj}h_{rsj}+\binom{n}{k}k(\C+\langle\nu,E\rangle)^{k-1}\lt(-\sum_lh_{jjl}u_l\rt)\nonumber\\
&&-K_0(\C+\langle\nu,E\rangle)^{k-1}\kappa_1^2\nonumber,
\end{eqnarray}
where $K_0=K_0(n, k, \C)>0$ is a constant depending on $n, k$ and $\C.$
Recall that in Minkowski space we have $$h_{jjii}=h_{iijj}+h^2_{ii}h_{jj}-h_{ii}h^2_{jj}.$$
Thus,
\be\label{pfth4.3}
\sigma_k^{ii}h_{jjii}=\sigma_k^{ii}h_{iijj}+\sigma_k^{ii}h_{ii}^2h_{jj}-\sigma_k^{ii}h_{ii}h_{jj}^2\geq\sigma_k^{ii}h_{iijj}- k\binom{n}{k}(\C+\langle\nu,E\rangle)^kh_{jj}^2.
\ee
Combining \eqref{pfth4.3} with \eqref{ub1.0} we obtain
\be\label{ub1.2*}
\begin{aligned}
0&\geq \sigma_{k}^{ii}\frac{\phi_{ii}}{\phi}=\frac{\sigma_{k}^{ii}}{P_m}\lt[\dsum_j\kappa_j^{m-1}h_{jjii}+(m-1)\dsum_j\kappa_j^{m-2}h_{jji}^2
+\sum_{p\neq q}\dfrac{\kappa_p^{m-1}-\kappa_q^{m-1}}{\kappa_p-\kappa_q}h_{pqi}^2\rt]\\
&-\frac{m\sigma_{k}^{ii}}{P_m^2}\lt(\dsum_j\kappa_j^{m-1}h_{jji}\rt)^2 -N\sigma_{k}^{ii}\sum_lh_{ili}\frac{\langle \tau_l,E\rangle}{(\C+\langle\nu,E\rangle)}\\
&+N\sigma_{k}^{ii}h_{ii}^2\frac{-\langle \nu, E\rangle}{\C+\langle \nu, E\rangle}
+N\sigma_{k}^{ii}h_{ii}^2\frac{u_i^2}{(\C+\langle\nu,E\rangle)^2}\\
&+N\sigma_{k}^{ii}\frac{h_{ii}\langle\nu,E\rangle}{u}+N\sigma_{k}^{ii}\frac{u_i^2}{u^2}+ s\frac{\sigma_{k}^{ii}h_{ii}\langle \nu,E\rangle}{(s-u)^2}-2 s\frac{\sigma_{k}^{ii}u_i^2}{(s-u)^3}\\
&\geq -K_0(\C+\langle\nu,E\rangle)^{k-1}\kappa_1+\sum_i(A_i+B_i+C_i+D_i-E_i)\\
&+\lt(\begin{matrix}n\\ k\end{matrix}\rt)k(\C+\langle\nu,E\rangle)^{k-1}\frac{-\sum_{j,l}h_{jjl}\kappa_j^{m-1}u_l}{P_m}-Nk\lt(\begin{matrix}n\\ k\end{matrix}\rt)(\C+\langle\nu,E\rangle)^{k-2}\sum_l\kappa_lu_l^2\\
&+N\sigma_k^{ii}\kappa_i^2\frac{-\langle\nu,E\rangle}{\C+\langle\nu,E\rangle}+N\sigma_{k}^{ii}h_{ii}^2\frac{u_i^2}{(\C+\langle\nu,E\rangle)^2}\\
&+N\sigma_{k}^{ii}\frac{h_{ii}\langle\nu,E\rangle}{u}+N\sigma_{k}^{ii}\frac{u_i^2}{u^2}+ s\frac{\sigma_{k}^{ii}h_{ii}\langle \nu,E\rangle}{(s-u)^2}-2 s\frac{\sigma_{k}^{ii}u_i^2}{(s-u)^3}.
\end{aligned}
\ee
Here
$$A_i=\dfrac{\kappa_i^{m-1}}{P_m}\lt[K(\sigma_{k})_i^2-\dsum_{p,q}\sigma_{k}^{pp,qq}h_{ppi}h_{qqi}\rt],\,\,\mbox{for some constant $K>1$,}$$
$$B_i=\dfrac{2\kappa_j^{m-1}}{P_m}\dsum_j\sigma_{k}^{jj,ii}h_{jji}^2,$$
$$C_i=\dfrac{m-1}{P_m}\sigma_{k}^{ii}\dsum_j\kappa_j^{m-2}h_{jji}^2,$$
$$D_i=\dfrac{2\sigma_{k}^{jj}}{P_m}\dsum_{j\neq i}\dfrac{\kappa_j^{m-1}-\kappa_i^{m-1}}{\kappa_j-\kappa_i}h_{jji}^2,$$
and
$$E_i=\dfrac{m\sigma_{k}^{ii}}{P_m^2}\lt(\sum_j\kappa_j^{m-1}h_{jji}\rt)^2.$$
By Lemma 8, Lemma 9, and Corollary 10 in \cite{LRW} we can assume the following claim holds.
\begin{claim}
\label{cvlem1.900}
There exists two small positive constants $\delta$ and $\eta<1$. If $\kappa_k\leq \delta\kappa_1$,
we have
\be\label{cv1.240}
\sum_{i}A_i+B_i+C_i+D_i-\lt(1+\frac{\eta}{m}\rt)E_i\geq 0,
\ee
where $m>0$ is sufficiently large.
\end{claim}
If \eqref{cv1.240} doesn't hold, we would have $\kappa_k> \delta\kappa_1$. Since $\sigma_{k}\leq \lt(\begin{matrix}n\\ k\end{matrix}\rt)\C^k$, we get
$$\delta^{k-1}\kappa_1^k\leq \kappa_1\kappa_2\cdots\kappa_k\leq \sigma_k\leq \lt(\begin{matrix}n\\ k\end{matrix}\rt)\C^k.$$
This gives an upper bound for
$\la_1$ at $x_0$ directly, then we would be done. Therefore, we assume \eqref{cv1.240} holds.
Plugging \eqref{cv1.240} into \eqref{ub1.2*} yields,
\be
\begin{aligned}\label{neweq}
0&\geq-K_0(\C+\langle\nu,E\rangle)^{k-1}\kappa_1+\eta\frac{\sigma_{k}^{ii}}{P_m^2}
\lt(\sum_j\kappa_j^{m-1}h_{jji}\rt)^2\\
&-k \lt(\begin{matrix}n\\ k\end{matrix}\rt)(\C+\langle\nu,E\rangle)^{k-1}|\nabla u|^2\left(\frac{N}{u}+\frac{s}{(s-u)^2}\right)\\\
&+N\sigma_k^{ii}\kappa_i^2\frac{-\langle\nu,E\rangle}{\C+\langle\nu,E\rangle}+N\sigma_{k}^{ii}h_{ii}^2\frac{u_i^2}{(\C+\langle\nu,E\rangle)^2}\\
&+N\sigma_{k}^{ii}\frac{h_{ii}\langle\nu,E\rangle}{u}+N\sigma_{k}^{ii}\frac{u_i^2}{u^2}+ s\frac{\sigma_{k}^{ii}h_{ii}\langle \nu,E\rangle}{(s-u)^2}-2 s\frac{\sigma_{k}^{ii}u_i^2}{(s-u)^3}.
\end{aligned}
\ee
From equation \eqref{12.2} we obtain
\be\label{ub1.1}
\begin{aligned}
\left(\frac{\dsum_j\kappa_j^{m-1}h_{jji}}{P_m}\right)^2&=N^2\frac{\kappa_i^2u_i^2}{(\C+\langle\nu,E\rangle)^2}+N^2\frac{u_i^2}{u^2}+\frac{s^2u_i^2}{(s-u)^4}\\
&-2N^2\frac{\kappa_iu_i^2}{u(\C+\langle \nu,E\rangle)}-2N s\frac{\kappa_iu_i^2}{(\C+\langle\nu,E\rangle)(s-u)^2}+2N s\frac{u_i^2}{u(s-u)^2}.
\end{aligned}
\ee
Inserting \eqref{ub1.1} into \eqref{neweq}, we derive
\begin{eqnarray}\label{pfth4.4}
&&0\geq-K_0(\C+\langle\nu,E\rangle)^{k-1}\kappa_1
+\eta\frac{s^2\sigma_{k}^{ii}u_i^2}{(s-u)^4}+N(N\eta+1)\sigma_{k}^{ii}\kappa_i^2\frac{u_i^2}{(\C+\langle\nu,E\rangle)^2}\\
&&-2N^2\eta\frac{\sigma_{k}^{ii}\kappa_iu_i^2}{u(\C+\langle \nu,E\rangle)}-2N s\eta\frac{\sigma_{k}^{ii}\kappa_iu_i^2}{(\C+\langle\nu,E\rangle)(s-u)^2}+2N s\eta\frac{\sigma_{k}^{ii}u_i^2}{u(s-u)^2}\nonumber\\
&&+N\sigma_{k}^{ii}\frac{h_{ii}\langle\nu,E\rangle}{u}+N(\eta N+1)\sigma_{k}^{ii}\frac{u_i^2}{u^2}+ s\frac{\sigma_{k}^{ii}h_{ii}\langle \nu,E\rangle}{(s-u)^2}-2 s\frac{\sigma_{k}^{ii}u_i^2}{(s-u)^3}\nonumber\\
&&-k \lt(\begin{matrix}n\\ k\end{matrix}\rt)(\C+\langle\nu,E\rangle)^{k-1}|\nabla u|^2\left(\frac{N}{u}+\frac{s}{(s-u)^2}\right)+N\sigma_{k}^{ii}\kappa_i^2\frac{-\langle\nu,E\rangle}{\C+\langle\nu,E\rangle}\nonumber.
\end{eqnarray}
It's clear that
\be\label{pfth4.5}|\nabla u|=\frac{|Du|}{\sqrt{1-|Du|^2}}<-\langle\nu,E\rangle\leq \C.\ee
We also notice that for any $1\leq i\leq n,$ $\sigma_{k}^{ii}\kappa_i\leq \lt(\begin{matrix}n\\ k\end{matrix}\rt)\C^k$ (no summation). By a simple calculation we get, when $N>\frac{1}{\eta^2}$
\be\label{pfth4.6}\eta\frac{s^2\sigma_{k}^{ii}u_i^2}{(s-u)^4}+2Ns\eta\frac{\sigma_{k}^{ii}u_i^2}{u(s-u)^2}-2s\frac{\sigma_{k}^{ii}u_i^2}{(s-u)^3}\geq 0.\ee
Moreover, applying Lemma \ref{lemm21} we know there exists two positive constants $\td{d}_2>\td{d}_1>0$ such that
\be\label{pfth4.7}\td{d}_1\leq u\lt(\C+\lt<\nu, E\rt>\rt)\leq\td{d}_2.\ee
Therefore, for $N>\frac{1}{\eta^2}$ being sufficiently large, combining \eqref{pfth4.5}-\eqref{pfth4.7} with \eqref{pfth4.4} we have,
\begin{eqnarray}
0&\geq&-K_0(\C+\langle\nu,E\rangle)^{k-1}\kappa_1-\frac{2N^2}{\tilde{d}_1}|\nabla u|^2\sigma_k^{ii}\kappa_i-2N s\frac{|\nabla u|^2 \sigma_k^{ii}\kappa_i}{(\C+\langle\nu,E\rangle)(s-u)^2}\nonumber\\
&&-N\C\sigma_k^{ii}\kappa_i- \C\sigma_k^{ii}\kappa_i\frac{s}{(s-u)^2}-k\C^2\lt(\begin{matrix}n\\ k\end{matrix}\rt)(\C+\langle\nu,E\rangle)^{k-1}\frac{s}{(s-u)^2}\nonumber\\
&&-k \lt(\begin{matrix}n\\ k\end{matrix}\rt)\C^2(\C+\langle\nu,E\rangle)^{k-1}N+N\frac{c_0\sigma_k\kappa_1}{\C+\langle\nu,E\rangle}\nonumber.
\end{eqnarray}
It's easy to see that the above inequality
yields, at $x_0$
$$\kappa_1\leq K(N,\C,\tilde{d}_1)\frac{s^2}{(s-u)^2}.$$
Therefore, in $U_s$, by \eqref{pfth4.7}, we have
$$\phi\leq K(N,\C, \tilde{d}_1)e^{-\frac{s}{s-u}}\frac{s^2}{(s-u)^2}.$$
Note that for any $t\in[0,s],$
$$\varphi(t)=e^{-\frac{s}{s-t}}\frac{s^2}{(s-t)^2}\leq 4e^{-2}.$$
We obtain that at any point $x\in U_s$,
\be\label{ub1.3}
\phi\leq K(N,\C,\tilde{d}_1).
\ee
Now, for any $x\in\R^n$, we can choose $s>0$ large such that $x\in U_{s/2}$. Then by \eqref{ub1.3} and \eqref{pfth4.7}, we conclude
\[\la_1(x)\leq K(N, \C, \tilde{d}_1, \tilde{d}_2).\]
Since $x$ is arbitrary, we finish proving Proposition \ref{ubprop1}.
\end{proof}
Theorem \ref{theo3} follows from Proposition \ref{es-th1} and Proposition \ref{ubprop1} immediately.

\end{document}